\newcommand\reallywidehat[1]{%
\savestack{\tmpbox}{\stretchto{%
  \scaleto{%
    \scalerel*[\widthof{\ensuremath{#1}}]{\kern-.6pt\bigwedge\kern-.6pt}%
    {\rule[-\textheight/2]{1ex}{\textheight}}%WIDTH-LIMITED BIG WEDGE
  }{\textheight}%
}{0.5ex}}%
\stackon[1pt]{#1}{\tmpbox}%
}
\newcommand\reallywidecheck[1]{%
\savestack{\tmpbox}{\stretchto{%
  \scaleto{%
    \scalerel*[\widthof{\ensuremath{#1}}]{\kern-.6pt\bigwedge\kern-.6pt}%
    {\rule[-\textheight/2]{1ex}{\textheight}}%WIDTH-LIMITED BIG WEDGE
  }{\textheight}%
}{0.5ex}}%
\stackon[1pt]{#1}{\scalebox{-1}{\tmpbox}}%
}
\numberwithin{equation}{section}
\newcommand{\ZZ}{{\mathbb Z}}
\newcommand{\CC}{{\mathbb C}}
\newcommand{\TT}{\mathbb T}
\newcommand{\NN}{\mathbb N}
\newcommand{\XX}{\mathbb X}
\newcommand{\cL}{{\mathcal L}}
\newcommand{\Per}{\mbox{Per} }
\newcommand{\cM}{{\mathcal M}}
\newcommand{\cA}{{\mathcal A}}
\newcommand{\cB}{{\mathcal B}}
\newcommand{\mm}{\mathsf{m}}
\newcommand{\dd}{\mbox{\rm d}}
\newcommand{\Cc}{C_{\mathsf{c}}}
\newcommand{\bap}{Bap}
\newcommand{\SSS}{\mathbb S}
 \newtheorem{theorem}{Theorem}[section]
 \newtheorem{lemma}[theorem]{Lemma}
 \newtheorem{prop}[theorem]{Proposition}
 \newtheorem{corollary}[theorem]{Corollary}
 \newtheorem{definition}[theorem]{Definition}
 \newtheorem{example}[theorem]{Example}
  \newtheorem{remark}[theorem]{Remark}
\begin{document}
\title{Wiener--Wintner points for topological dynamical systems }
\author{Daniel Lenz}
\address{Mathematisches Institut, Friedrich Schiller Universit\"at Jena, 07743 Jena, Germany}
\email{daniel.lenz@uni-jena.de}
\urladdr{http://www.analysis-lenz.uni-jena.de}

\author{Nicolae Strungaru}
\address{Department of Mathematics and Statistics, MacEwan University \\
10700 -- 104 Avenue, Edmonton, AB, T5J 4S2, Canada\\
and \\
Institute of Mathematics ``Simon Stoilow''\\
Bucharest, Romania}
\email{strungarun@macewan.ca}
\urladdr{https://sites.google.com/macewan.ca/nicolae-strungaru/home}

\begin{abstract}
We consider measurable and topological dynamical systems over locally compact abelian groups. Our main observation  relates convergence  of Wiener-Wintner type averages to
eigenvalues of the dynamical system in question. As a consequence we infer
 existence of Fourier--Bohr coefficients for all characters for a set of points satisfying a specific genericity condition. In the topological case this leads naturally to the concept of what we call Wiener--Wintner point and we present a thorough study of such points. In particular we show that they have full measure in the ergodic case, and we relate them to Besicovitch almost periodicity. For dynamical systems of translation bounded measures, which are the  crucial models in aperiodic order, our results give that the Wiener--Wintner points are exactly the points allowing for a diffraction theory with the consistent phase property.
\end{abstract}

\maketitle

\section{Introduction}
We consider Wiener--Wintner type  averages for (continuous) functions on
dynamical systems over locally compact Abelian groups.  In the
simplest situation we deal with a compact metric space $X$, together with a
homeomorphism  $T  : X\longrightarrow X$ and a $T$-invariant
probability measure $\mm$, and we study points $x\in X$ for which the
limits
\begin{equation}\label{eq2}
\lim_N \frac{1}{2N+1} \sum_{n=-N}^N f(T^n x) z^n \,,
\end{equation}
exist for all continuous $f : X\longrightarrow \CC$ and a given  fixed
$z\in \TT$. Here, $\TT$ denotes the multiplicative group
$$\TT:=\{w\in\CC: \mbox{ with }  |w| =1\} \,.$$

Our basic observation relates existence of these limits (for all $f\in
C(X)$)  to existence of an eigenfunction to $z$ (see Lemma
\ref{lem-eigen-exi} for a precise statement in a substantially more
general situation). This allows us to establish vanishing (and, in
particular,  existence) of the limit for compact subsets of $\TT$ not
meeting any eigenvalue for all generic points $x\in X$ (Proposition~
\ref{uniform-vanishing}). In the uniquely ergodic case this  also
allows us to infer uniform (in $x\in X$) vanishing of the limit for
compact sets in $\TT$ outside of eigenvalues, thereby generalizing
results of Lenz \cite{Len}, which in turn generalize results of Assani
\cite{Ass} (see  \cite{Rob,Sch} as well). All of this is
discussed in Section~\ref{sect:top}.

Then, in Section \ref{sec-Strongly}, we study those points
where the above limits exist for all continuous $f \in C(X)$ and all $z\in
\TT$. We call these Fourier--Bohr points. By our basic observation any
Fourier--Bohr point comes with a family of eigenfunctions. It is not
clear that this family is an orthonormal basis of the space of
eigenfunctions. If this is the case, we call the corresponding point a
Wiener--Wintner point. The study of Wiener--Wintner points is the main
thrust of the second half of the article. We show that existence of
such points implies ergodicity and that ergodicity implies that almost
all points are Wiener--Wintner points. Moreover, we show that for dynamical systems with pure
point spectrum, the
Wiener--Wintner points are exactly the generic points that are Besicovitch almost
periodic. This ties in with recent work of Kasjan and Keller
\cite{KK} focusing on the generic Besicovitch almost periodic points.

Our interest in generic Besicovitch  almost periodic points comes from
aperiodic order also known as mathematical theory of quasicrystals,
see \cite{TAO}. A key feature of aperiodic order  is pure point
diffraction. In the mathematical modelling this leads to dynamical
systems with discrete spectrum. The Besicovitch almost periodic points
are then exactly the points allowing for a convincing diffraction
theory \cite{LSS,LSS2}. This is discussed in the last section of the article.

Our results in the topological case are an  applications of a more
general approach working for measurable dynamical systems as
developed in  Section \ref{sect:main} after the general introduction to
dynamical systems in Section \ref{sec-Basics}.  This approach is
centered around two concepts we introduce. These are the  concept of
genericity of a point for a subspace and the concept of core for a
subspace. Our results in the measurable case give the classical  Wiener--Wintner
theorem as a  by product. The application to the topological case then
comes about by choosing $C(X)$ as the subspace in question.

The results of this article are used in Chapter 8 of the treatise \cite{LSS}.

\section{Basics on  measurable dynamical systems}\label{sec-Basics}
Let $G$ be a $\sigma$-compact, locally compact Abelian
group $G$ (LCAG). The composition
on $G$ itself is written additively. We fix a Haar measure  on $G$. The Haar measure
of a measurable subset $A \subseteq  G$ is denoted by $|A|$ and the
integral of an integrable function $f$ on $G$ by $\int_{G} f(t)\, \dd t$.

Whenever the measurable  space $(X,\Sigma)$ is equipped with a measurable action
$$
\alpha: G\times X\longrightarrow X , \qquad (t,x)\mapsto
\alpha_t (x) \,,
$$
we call $(X,\Sigma, G)$ a \textit{measurable dynamical system (over the
space $X$)} and  write $t.x$
instead of $\alpha_t (x)$ for $t\in G$ and $x\in X$.
The \textit{orbit} of $x$ under the action is given by
$$\mbox{O}(x) :=\{ t.x : t\in G\}.$$
For a function $f : X\longrightarrow \CC$  and $x\in X$ we then define the \textit{coding} of $f$ on the orbit of $x$ to be the function
$f_x : G\longrightarrow \CC$ defined by
\[
f_x (t):= f(t.x) \,.
\]
For a function $f : G \to \CC$ and some $t \in G$, we denote by $\tau_t$ the translation
\[
(\tau_tf)(x):= f(x-t) \,.
\]
Similarly, for $g: X \to \CC$ and $t \in G$ we denote by
$\tau_t g$ the translation
\[
(\tau_tf)(x):= g(t.x) \,.
\]
It is easy to see that for all $g: X \to \CC$, all $x \in X$ and all $t \in G$ we have
\[
\tau_t(g_x)= (\tau_{-t}g)_x \,.
\]

A measure  $\mm$ on $(X,\Sigma)$ is called \textit{invariant} if $\mm(\alpha_{t} A) = \mm(A)$ holds for all $A\in \sigma$ and all $t\in G$.

Given a $G$-invariant measure $\mm$ on $(X, \Sigma)$, we refer to the triple  $(X,G,\mm)$ as measurable dynamical system.
Invariance of $\mm$  implies
$$\int_X f(t.x)\dd \mm(x) = \int_X f(y) \dd \mm(y)$$
for all $t\in G$ and all measurable positive $f$ on $X$.

For $1\leq p <\infty$ we denote by $\mathcal{L}^p (X,m)$ the set of measurable $f : X\longrightarrow \CC$ with
$$\int_X |f(x)|^p \dd \mm(x) <\infty \,.
$$
Identifying functions which agree almost everywhere we then arrive at the Banach spaces $L^p (X,\mm)$. We will mostly deal with $L^2 (X,\mm)$, which is a Hilbert space with inner product
$$\langle f,g\rangle =\int_X f(x) \cdot \overline{g(x)} \dd \mm(x) \,,$$
and associated norm
$$\|f\|_2:= \langle f, f\rangle^{1/2} \,,$$
where we followed the custom and wrote $f,g$ for the classes represented by the functions $f$ and $g$.

In our considerations below we will have to be more careful at certain places and restrict attention to elements of $\mathcal{L}^2 (X,\mm)$ rather than of $L^2 (X,\mm)$  as we are interested in specific sets of points, where suitable statements hold (or do not hold).

Any invariant  measure $\mm$  on $X$ gives rise to a family of unitary operators $U_t, t\in G,$ on the space $L^2 (X,\mm)$ via
\begin{align*}
&U_t : L^2 (X,\mm)\longrightarrow L^2 (X,\mm) \\
&(U_t f) (x) = f(t.x) \,,
\end{align*}
for $t\in G$. As $\alpha$ is an action, $(U_t)_{t \in G}$ form a unitary representation i.e.
\begin{align*}
U_0 &= \mbox{Id} \qquad  \mbox{ and } \\
U_{t+s} &= U_{t} U_s \qquad \forall t,s\in G \,.
\end{align*}

\smallskip

A  function $f$ on $X$  is \textit{$G$-invariant}(or simply \textit{invariant}) if for any $t\in G$ the functions $\tau_tf$ and $f$ agree almost everywhere. The dynamical system $(X,G,\mm)$ is \textit{ergodic} if any invariant measurable complex valued function is constant almost everywhere.

\medskip

Recall next that a \textit{ F\o lner sequence} $(A_n)$ in $G$, is a sequence of open relatively compact subsets $A_n$ of $G$  with the property that for all $t \in G$ we have
\[
\lim_n \frac{|(A_n \triangle (t+A_n))|}{|A_n|} =0 \,.
\]
A F\o lner sequence exists in a LCAG $G$ if and only if $G$ is $\sigma$-compact \cite[Prop.~B6]{SS}. For this reason, we will always assume that $G$ is $\sigma$-compact.
A F\o lner sequence $\cA$  is \textit{tempered} if there exists a $C>0$ with
  \[
  \left|\bigcup_{k=1}^{n-1} (B_n-B_k) \right| \leq C |B_n| \,,
  \]
for all $n\in \NN$.   Some basic properties of tempered F\o lner sequences to be used subsequently are discussed in the  Appendix \ref{tempered}.
We have the following pointwise ergodic theorem.

\begin{theorem}[Pointwise ergodic theorem]\label{thm-point-erg}\cite[Theorem 1.2 and Proposition 1.4]{Lin} Let $G$ be any $\sigma$-compact LCAG.  If $(X, G, \mm)$ is any ergodic dynamical system and $\cA=\{ A_n \}$ is any tempered F\o lner sequence then the \textit{pointwise ergodic theorem holds}, i.e. for each $f \in \cL^1(X,\mm)$
  there exists a set $X_0 \subseteq X$ with $\mm(X_0)=1$ such that for all $x \in X_0$ we have
\[
\lim_{n\to \infty} \frac{1}{|A_n|} \int_{A_n} f(t.x)\, \dd t = \int_X f(y) \,
\dd \mm(y) \,.
\]

Moreover, any F\o lner sequence contains a subsequence, which is tempered, and  along which accordingly  the pointwise ergodic theorem holds.
\end{theorem}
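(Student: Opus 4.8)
The statement is Lindenstrauss's pointwise ergodic theorem for tempered F\o lner sequences together with his observation that every F\o lner sequence contains a tempered subsequence, so the plan is to recall the architecture of that argument. The heart of the matter --- and the step I expect to be the real obstacle --- is a \emph{covering lemma} for tempered F\o lner sequences: if $\cA=(A_n)$ is tempered with constant $C$, then there is $C'=C'(C)$ so that every finite family of translates $\{A_{n_i}+g_i\}_{i\in F}$ in $G$ admits a subfamily indexed by $F'\subseteq F$ with $\sum_{i\in F'}\mathbf 1_{A_{n_i}+g_i}\le C'$ pointwise and $\bigl|\bigcup_{i\in F'}(A_{n_i}+g_i)\bigr|\ge C'^{-1}\bigl|\bigcup_{i\in F}(A_{n_i}+g_i)\bigr|$. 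I would prove this by a randomized greedy selection: process the translates in order of decreasing index and retain each set that is still ``sufficiently exposed'' with a carefully chosen probability; the temperedness inequality $\bigl|\bigcup_{k<n}(A_n-A_k)\bigr|\le C|A_n|$ is exactly what bounds the expected multiplicity of the retained family, and a first--moment computation then shows that the retained sets cover a definite fraction of the total region in expectation, so a good choice exists. This probabilistic covering argument is where temperedness (not merely the F\o lner property) is essential; everything that follows is standard amenable--group ergodic theory.

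Granted the covering lemma, the Calder\'on transference principle converts it into a maximal inequality on $X$: writing $S_nf(x):=\frac{1}{|A_n|}\int_{A_n}f(t.x)\,\dd t$ and $(M_\cA f)(x):=\sup_n S_n|f|(x)$, the operator $M_\cA$ is of weak type $(1,1)$ with respect to $\mm$, hence, interpolating with the trivial $L^\infty$ bound, bounded on $L^2(X,\mm)$. Next I would exhibit a class that is dense in $L^2(X,\mm)$ and on which pointwise convergence is transparent: bounded $G$--invariant functions $f$, for which $S_nf=f$ $\mm$--a.e.\ (Fubini turns invariance into: for a.e.\ $x$, $f(t.x)=f(x)$ for a.e.\ $t$); and coboundaries $g-\tau_{t_0}g$ with $g\in L^\infty(X,\mm)$ and $t_0\in G$, for which
\[
\left|\frac{1}{|A_n|}\int_{A_n}\bigl(g(t.x)-g((t+t_0).x)\bigr)\,\dd t\right|\le \frac{|A_n\triangle(A_n+t_0)|}{|A_n|}\,\|g\|_\infty\longrightarrow 0
\]
as $n\to\infty$, for every $x$, by the F\o lner property; that the closed span of these functions exhausts $L^2(X,\mm)$ is the mean ergodic (von Neumann) decomposition, using density of bounded functions. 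The Banach principle now finishes the $L^2$ case: the $L^2$ bound on $M_\cA$ forces $\{f:\ S_nf\ \text{converges}\ \mm\text{-a.e.}\}$ to be closed in $L^2(X,\mm)$, since for any $f_k$ from the dense class one can estimate the oscillation $\limsup_{n,n'}\bigl|S_nf(x)-S_{n'}f(x)\bigr|\le 2M_\cA(f-f_k)(x)$ and let $f_k\to f$; a further approximation using the weak $(1,1)$ bound extends $\mm$--a.e.\ convergence to all $f\in\cL^1(X,\mm)$, as $L^2\subseteq\cL^1$ densely on the probability space $(X,\mm)$. The limit is the conditional expectation of $f$ onto the $\sigma$--algebra $\mathcal I$ of invariant sets; ergodicity makes $\mathcal I$ trivial, so the limit is the constant $\int_X f\,\dd\mm$, which is the first assertion.

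For the ``moreover'' I would extract the tempered subsequence greedily. Put $n_1:=1$; having chosen $n_1<\dots<n_{j-1}$, let $K_j:=-\bigcup_{i<j}A_{n_i}$, which is relatively compact, and use the standard fact that $|A_m+K|/|A_m|\to 1$ as $m\to\infty$ for every fixed compact $K\subseteq G$ (F\o lner sets have negligible $K$--boundary, a consequence of the F\o lner property) to choose $n_j>n_{j-1}$ with $|A_{n_j}+K_j|\le 2|A_{n_j}|$. Writing $B_j:=A_{n_j}$ one has $\bigcup_{i<j}(B_j-B_i)=B_j+K_j$, so $(B_j)$ is a F\o lner sequence --- being a subsequence of one --- that is tempered with constant $C=2$, and the pointwise ergodic theorem just proved applies along it. (Any measurability issues for the maximal function are harmless for $\sigma$--compact $G$ and may be handled, if needed, by passing to a countably generated invariant sub--$\sigma$--algebra.)
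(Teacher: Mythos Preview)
The paper gives no proof of this theorem at all: it is stated with an explicit citation to Lindenstrauss \cite[Theorem~1.2 and Proposition~1.4]{Lin} and then used as a black box throughout. So there is nothing in the paper to compare your argument against; your proposal is, in effect, a reconstruction of Lindenstrauss's own proof rather than an alternative to anything the authors do.

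As such a reconstruction, your outline is faithful in its architecture. You correctly isolate the probabilistic covering lemma for tempered families as the decisive step, you transfer it to a weak $(1,1)$ maximal inequality via Calder\'on, and you close with the von Neumann decomposition plus the Banach principle --- this is exactly the skeleton of \cite{Lin}. Your extraction of a tempered subsequence is also the standard greedy argument. One small caution: the ``standard fact'' you invoke, that $|A_m+K|/|A_m|\to 1$ for every fixed compact $K$, is not literally the pointwise F\o lner condition the paper states (invariance under single translates), and passing from the pointwise to the compact--uniform version requires a short argument in the non-discrete case; you may want to flag that if you ever write this out in full. But none of this affects the paper under review, which simply quotes the result.
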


The dual group of $G$, i.e. the set of all continuous group homomorphisms from $G$ to the unit circle $\TT$ is denoted by $\widehat{G}$.
It is a group with addition defined by pointwise multiplication of characters, and it becomes a LCAG under a certain topology (see \cite[Sect~1.2.6]{Rud} for details).

Whenever $(X,G,\mm)$ is a dynamical system we call an $f\in L^2 (X,\mm)$ with $f\neq 0$ an \textit{eigenfunction} to the \textit{eigenvalue}   $\chi : G\longrightarrow  \CC$ if
$$f(t\cdot) = \chi (t) f$$
holds for all $t\in G$. It is not hard to see that $\chi$ must then belong to $\widehat{G}$.   The set of all eigenfunctions to a given eigenvalue $\chi$ together with the constant function $0$ is a closed subspace of $L^2 (X,\mm)$ called  the \textit{eigenspace} to $\chi$.
We call
\[
\SSS:= \{ \chi \in \widehat{G} : \chi \mbox{ is an eigenvalue for } (X,G,\mm) \} \ ,
\]
the \textit{point spectrum} of $(X, G, \mm)$. Moreover, we write $L(X, \mm)_{\mathsf{pp}}$ for the closure in $L^2(X,\mm)$ of the linear span of the eigenfunctions.

\medskip

For a given F\o lner sequence $(A_n)$, $\chi \in\widehat{G}$ and a measurable function $h : G\longrightarrow \CC$ we define
$$a_\chi^{A_n} (h):=\frac{1}{|A_n|} \int_{A_n} h(t) \cdot \overline{\chi(t)} \dd t \mbox{  (provided  the integral exists)}$$
and we define
$$a_\chi^{\cA} (h):=\lim_n a_\chi^{A_n} (h) \mbox{  (provided  the limit exists)} \,.$$
Whenever  the limit $a_{\chi}^\cA(h)$ exists, it is called the \textit{Fourier--Bohr coefficient} of $h$ at $\chi$.

\smallskip
For  a given $f\in \mathcal{L}^1(X,\mm)$, a F\o lner sequence $(A_n)$, some  $\chi \in \widehat{G}$  and $x\in X$  we can then consider
$a_\chi^{A_n} (f_x)$ and $a_\chi^{\mathcal{A}} (f_x)$ (provided these expressions are defined) and  we  call
$$a_\chi^{\cA} (f_x) = \lim_n \frac{1}{|A_n|}\int_{A_n} f(t.x) \cdot  \overline{\chi(t)} \dd t$$
the \textit{Fourier--Bohr coefficient} of $f$ at $\chi$  in $x\in X$.

The pointwise ergodic theorem can then be understood to deal with existence of Fourier--Bohr coefficients for $\chi =1$. The case  of general $\chi$  is the topic of this article. Here, we  already note that, by von Neumann ergodic theorem,
the averages
$$x\mapsto \frac{1}{|A_n|} \int_{A_n} f(t.x) \cdot \overline{ \chi (t)} \dd t$$
converge to $P_\chi f$ in  $L^2 (X,\mm)$, where $P_\chi$ is the projection onto the eigenspace to $\chi$.

\section{The main observation}\label{sect:main}
In this section we present our main observation. We start by introducing the following definitions.

\bigskip

\begin{definition}[$(E,G')$-generic points] Let $(X,G, \mm)$ be a measurable dynamical system and $\mathcal{A}$ a F\o lner sequence on $G$.  Let $E$ be a subset of $\mathcal{L}^2 (X,\mm)$ and $G'$ a subgroup of $G$. An element $x\in X$ is called {\it $(E,G')$-generic (with respect to $\mathcal{A}$)} if the following holds:
\begin{itemize}
\item  $\int_X f(y) \overline{g(y)} \dd \mm(y) = \lim_n \frac{1}{|A_n|} \int_{A_n}  f(t.x) \overline{g(t.x)} \dd t$
holds for all $f,g\in E$; and
\item $\lim_n \frac{1}{|A_n|} \int_{A_n \triangle (A_n+s)} |f(t.x)|^2 \dd t = 0$ for each  $f\in E$ and all $s\in G'$.
\end{itemize}
\end{definition}

\begin{remark} If all elements in $E$ are bounded functions, then for any subgroup $G' \leq G$ an element $x \in X$ is $(E,G')$-generic if and only if
\[
\lim_n \frac{1}{|A_n|} \int_{A_n}  f(t.x)\cdot \overline{g(t.x)} \dd t \, = \, \int_X f(y)\cdot \overline{g(y)} \dd \mm(y) \,,
\]
holds for all $f,g \in E$.
\end{remark}

In  the situation of the definition an $(E,G')$-generic point is also generic for the linear span of $E$. Thus, we can always assume without loss of generality that $E$ is a subspace when dealing with generic points. We also note that genericity is preserved under uniform limits.  Here,  the function $f$ on $X$ is the uniform limit of the sequence of functions $(f_n)$ if
$$\|f - f_n\|_\infty\to 0,n \to \infty,$$
with $\|g\|_\infty :=\sup_{x\in X} |g(x)|$.
Thus, we can also always assume that $E$ is closed under uniform limits.

\begin{definition}[Core]
Let $E$ be a subspace of $\mathcal{L}^2 (X, \mm)$. A subset $F \subseteq E$  is called a {\rm core for $E$} if any element of $E$ is a uniform limit of a sequence in the linear span of $F$.
\end{definition}

We will be particularly interested in subspaces with  a countable core.  The reason is that for such subspaces we can show under an ergodicity assumption  that generic points have full measure.
Specifically, the following is true.

\begin{prop}[Existence of generic elements]\label{existence-generic} Let $(X,G, \mm)$ be  a measurable  dynamical system and let $\cA=\{A_n\}$ be a F\o lner sequence.

Let $E$ be a subspace of $\mathcal{L}^2 (X, \mm)$ with a countable core and let $G'$ be a countable subgroup of $G$.  Then, the following holds:

\begin{itemize}
\item[(a)]  If $\mm$ is ergodic and $\cA$ is tempered then the set of generic elements for $(E,G')$ has full measure.

\item[(b)] If $\mm$ is ergodic, then there exists a subsequence $\cB$ of $\cA$ such that the set of generic elements for $(\mm,\cB)$ has full measure.
\end{itemize}
\end{prop}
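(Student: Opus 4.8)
The plan is to reduce both statements to a countable family of pointwise-almost-everywhere statements, each of which follows from the pointwise ergodic theorem (Theorem~\ref{thm-point-erg}), and then take a countable intersection.

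First I would fix a countable core $F = \{h_1, h_2, \ldots\}$ for $E$. Since genericity is preserved under passing to the linear span and under uniform limits (as recalled after the definition), it suffices to produce a single full-measure set $X_1 \subseteq X$ on which both defining conditions of $(E,G')$-genericity hold for all $f, g$ in the linear span of $F$; every element of $E$ is then a uniform limit of such, and the genericity conditions pass to the limit because the functions $|f(t.x)|^2$ and $f(t.x)\overline{g(t.x)}$ depend on $f,g$ continuously in $\|\cdot\|_\infty$ uniformly in $t$ and $x$. By bilinearity of $(f,g) \mapsto \int_X f\bar g\,\dd\mm - \lim_n \frac{1}{|A_n|}\int_{A_n} f(t.x)\overline{g(t.x)}\,\dd t$, it is in fact enough to control the finitely-many pair functions $h_i \overline{h_j}$, i.e. to handle the countable collection $\{h_i \overline{h_j} : i,j \in \NN\}$ together with $\{|h_i|^2 : i \in \NN\}$. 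Each $h_i \overline{h_j}$ lies in $\mathcal{L}^1(X,\mm)$, since $h_i, h_j \in \mathcal{L}^2(X,\mm)$ and Cauchy--Schwarz gives integrability of the product.

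For part~(a): assume $\mm$ ergodic and $\cA = \{A_n\}$ tempered. Apply Theorem~\ref{thm-point-erg} to each of the countably many functions $h_i\overline{h_j} \in \mathcal{L}^1(X,\mm)$, obtaining for each pair a full-measure set on which $\frac{1}{|A_n|}\int_{A_n} (h_i\overline{h_j})(t.x)\,\dd t \to \int_X h_i\overline{h_j}\,\dd\mm$; this is precisely the first genericity condition for the pair $(h_i, h_j)$. For the second condition, fix $f$ in the span of $F$ and $s \in G'$. Write $\frac{1}{|A_n|}\int_{A_n \triangle (A_n+s)} |f(t.x)|^2\,\dd t$ as the difference $\frac{1}{|A_n|}\int_{A_n}|f(t.x)|^2\,\dd t - \frac{1}{|A_n|}\int_{A_n \cap (A_n+s)}|f(t.x)|^2\,\dd t$, or more cleanly bound it by $\frac{1}{|A_n|}\int_{A_n}|f(t.x)|^2\,\dd t + \frac{1}{|A_n|}\int_{A_n+s}|f(t.x)|^2\,\dd t - \frac{2}{|A_n|}\int_{A_n\cap(A_n+s)}|f(t.x)|^2\,\dd t$; using that $\frac{1}{|A_n|}\int_{A_n}|f(t.x)|^2\,\dd t$ and $\frac{1}{|A_n|}\int_{A_n+s}|f(t.x)|^2\,\dd t$ both converge to $\int_X |f|^2\,\dd\mm$ along $\cA$ (the latter because $(A_n+s)$ is again a F\o lner--Lindenstrauss sequence for the same limit, or by a direct F\o lner estimate $|A_n \triangle (A_n+s)|/|A_n|\to 0$ combined with boundedness---here one uses $f \in \mathcal{L}^2$ only, so a small truncation argument: approximate $|f|^2$ in $\mathcal{L}^1$, handle the bounded part by the F\o lner property and the tail uniformly via the ergodic averages), the whole expression tends to $0$. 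Intersecting the countably many full-measure sets so obtained (one per pair $(i,j)$ and one per pair $(h_i, s)$ with $s$ ranging over the countable group $G'$) yields the desired full-measure set of $(E,G')$-generic points.

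For part~(b): drop the tempered assumption on $\cA$. Here the issue is that Theorem~\ref{thm-point-erg} only guarantees the pointwise ergodic theorem along tempered F\o lner sequences. But the theorem also states that every F\o lner sequence contains a tempered subsequence. So extract a tempered subsequence $\cB$ of $\cA$ and apply part~(a) with $\cB$ in place of $\cA$; note $G' = \{0\}$ (or equivalently the second genericity condition is vacuous for the pair $(\mm, \cB)$, which is the statement that $x$ is generic in the ordinary sense $\frac{1}{|B_n|}\int_{B_n} f(t.x)\,\dd t \to \int_X f\,\dd\mm$ for all $f$ in a core, hence for all $f \in E$).

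The main obstacle is the second genericity condition in part~(a), specifically handling it for $f \in \mathcal{L}^2$ rather than $f$ bounded: one cannot directly invoke the ergodic theorem on the set $A_n \triangle (A_n+s)$ because its normalized measure, while small, is multiplied against an $\mathcal{L}^2$ (not $\mathcal{L}^\infty$) integrand. The fix is the truncation argument sketched above---split $|f|^2 = (|f|^2 \wedge M) + (|f|^2 - M)_+$, use the F\o lner property $|A_n \triangle (A_n+s)|/|A_n| \to 0$ for the bounded part, and control the tail part uniformly by observing that $\frac{1}{|A_n|}\int_{A_n \triangle (A_n+s)} (|f|^2 - M)_+(t.x)\,\dd t \le \frac{1}{|A_n|}\int_{A_n}(|f|^2-M)_+(t.x)\,\dd t + \frac{1}{|A_n|}\int_{A_n+s}(|f|^2-M)_+(t.x)\,\dd t$, both of which converge to $\int_X (|f|^2-M)_+\,\dd\mm$, a quantity that is small for $M$ large. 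Letting $M \to \infty$ after $n \to \infty$ completes it.
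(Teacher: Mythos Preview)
Your overall strategy is correct, and part~(b) matches the paper exactly. For the second genericity condition in part~(a), however, you take a genuinely different route from the paper, and there is one small slip worth flagging.

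The paper does not use truncation. Instead it invokes an appendix lemma showing that for each fixed $s$ the sequences $B_n = A_n \cap (s+A_n)$ and $C_n = A_n \cup (s+A_n)$ are again \emph{tempered} F\o lner sequences. Applying the pointwise ergodic theorem to $|h|^2$ along both $(B_n)$ and $(C_n)$ gives, on a full-measure set, $\tfrac{1}{|B_n|}\int_{B_n}|h(t.x)|^2\,\dd t \to \|h\|_2^2$ and $\tfrac{1}{|C_n|}\int_{C_n}|h(t.x)|^2\,\dd t \to \|h\|_2^2$; since $A_n\triangle(s+A_n) = C_n\setminus B_n$ and $|B_n|/|A_n|\to 1$, $|C_n|/|A_n|\to 1$, subtraction gives the desired vanishing directly. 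This buys a clean one-line computation at the cost of the auxiliary temperedness lemma.

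Your truncation argument avoids that lemma entirely and is more elementary in spirit, but as written it is applied to ``$f$ in the span of $F$''. For the tail estimate you need the pointwise ergodic theorem for $(|f|^2-M)_+$, and the span is uncountable, so you cannot intersect the resulting good sets. The fix is immediate: run the truncation only for $f$ in the countable core $F$ (and $M\in\NN$, $s\in G'$), obtaining a full-measure set on which the second condition holds for every $h_i\in F$; then pass to the span via $|{\textstyle\sum} c_i h_i|^2 \le N\sum|c_i|^2|h_i|^2$ and to uniform limits via $|f|^2\le 2|f_k|^2 + 2\|f-f_k\|_\infty^2$, neither step requiring further ergodic-theorem input. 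With that adjustment your argument is complete.
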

\begin{proof}
\textbf{(a)}
 Let $D$ be a countable core of $E$. By the pointwise ergodic theorem, for each $f,g \in D$ there exists some set
$X_{f,g} \subseteq X$ of full measure such that
$$\lim_n \frac{1}{|A_n|} \int_{A_n} f(t.x) \overline{g(t.x)} \dd t = \langle f, g\rangle$$
holds for $f$ and $g$  and all $x \in X_{f,g}$.

By Lemma~\ref{Folner2}, for each $s \in G$, the sequences $(B_n)$ and $(C_n)$ with
\begin{align*}
  B_n & =A_n \cap (s+A_n)\\
  C_n & =A_n \cup (s+A_n)
 \end{align*}
 are tempered F\o lner sequences. Therefore, for each $f \in D$ there exists some $X_{f,s} \subseteq X$ of full measure such that
\begin{align*}
\lim_n \frac{1}{|B_n|} \int_{B_n} |f(t.x)|^2  \dd t = \| f \|_2^2  \\
\lim_n \frac{1}{|C_n|} \int_{C_n} |f(t.x)|^2  \dd t = \| f \|_2^2 \,,
\end{align*}
holds for $f$  and all $x \in X_{f,s}$.
It follows that the set
\[
Y:= \left( \bigcap_{f,g \in D} X_{f,g} \right) \cap \left( \bigcap_{f \in D, s \in G' } X_{f,s} \right)
\]
has full measure. For  $f,g \in D, s \in G'$ and $x \in Y$ we then infer
 $$
\lim_n \frac{1}{|A_n|} \int_{A_n} f(t.x) \overline{g(t.x)} \dd t = \langle f, g\rangle$$
and   with  to $A_n\triangle (A_n + s) = C_n \setminus B_n$  we also conclude
\begin{align*}
\lim_n \frac{1}{|A_n|} \int_{A_n \triangle (A_n+s)} |f(t.x)|^2 \dd t &= \lim_n \frac{1}{|C_n|} \int_{C_n} |f(t.x)|^2 \dd t -\lim_n \frac{1}{|B_n|} \int_{B_n} |f(t.x)|^2 \dd t \\
&=  \| f \|_2^2 - \| f \|_2^2\\
&=0 \,.
\end{align*}
This proves (a).

\smallskip

\textbf{(b)}  This follows from (a) and Theorem~\ref{thm-point-erg}.
\end{proof}

The following lemma  is the main ingredient for our subsequent considerations.

\begin{lemma}[Main observation: forcing  eigenvalues] \label{lem-eigen-exi} Let $(X,G, \mm)$ be a measurable dynamical system, $\cA$ a F\o lner sequence and $G'$ a dense subgroup of $G$.

Let $E\subset \mathcal{L}^2 (X,\mm)$ be a $G'$-invariant subspace and let $x\in X$ be $(E,G')$-generic. Let $\bar{E}$ be the closure of $E$ in $L^2(X, \mm)$.

Let $(\chi_n)$ be a sequence in $\widehat{G}$  converging pointwise to $\chi \in \widehat{G}$ such that
$$ F(g):= \lim_n\frac{1}{|A_n|} \int_{A_n} g(t.x) \cdot \overline{\chi_n(t)} \dd t$$
exists for all $g$ in $E$. Then, there exists a unique  $f_\chi \in \bar{E}$  satisfying
\begin{itemize}
  \item{} $\| f_\chi \|_2 \leq 1$;
  \item{} $U_t f_\chi = \chi(t) f_\chi \,$ for all $t\in G$.
%  \item{} for all $g \in E$ we have
%  \[
% \langle g, f_\chi \rangle = a_\chi^{\mathcal{A}}(g_x) \qquad \mbox{ for all } g \in E.
%\]
\end{itemize}
Specifically, $f_\chi =0$ if $F(g) = 0$ for all $g\in E$ and  $f_\chi$ is an eigenfunction to $\chi$ otherwise.
\end{lemma}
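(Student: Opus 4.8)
The plan is to define $f_\chi$ as an $L^2$-limit of the averages $A_n^{-1}\int_{A_n} (U_t g)\,\overline{\chi_n(t)}\,\dd t$ for a well-chosen $g\in E$, show this limit exists and lies in $\bar E$, verify it has norm at most $1$, and then use the $(E,G')$-genericity together with density of $G'$ to show it is an eigenfunction (or zero). The natural candidate is as follows. Since $x$ is $(E,G')$-generic, the sesquilinear form $(g,h)\mapsto \langle g_x,h_x\rangle_{A_n}:=|A_n|^{-1}\int_{A_n} g(t.x)\overline{h(t.x)}\,\dd t$ converges to the inner product $\langle g,h\rangle$ on $E$; in particular the map $g\mapsto F(g)$ is a bounded conjugate-linear (or linear, depending on the convention for $F$) functional on $E$ with $|F(g)|\le \liminf_n \langle g_x,g_x\rangle_{A_n}^{1/2}\cdot\big(|A_n|^{-1}\int_{A_n}|\chi_n(t)|^2\dd t\big)^{1/2}=\|g\|_2$ by Cauchy--Schwarz (using $|\chi_n(t)|=1$). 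Hence $F$ extends to a bounded functional on $\bar E$ of norm $\le 1$, and by Riesz representation there is a unique $f_\chi\in\bar E$ with $\|f_\chi\|_2\le 1$ and $F(g)=\langle g,f_\chi\rangle$ for all $g\in E$ (with appropriate placement of the conjugate). Uniqueness is then immediate.

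The heart of the argument is the eigenfunction property. Here I would exploit the cocycle identity $\tau_t(g_x)=(\tau_{-t}g)_x$ and the near-invariance of $A_n$ (Følner) to relate $F(\tau_{-s}g)$ to $\chi(s)F(g)$ for $s\in G'$. Concretely, for fixed $s\in G'$,
\[
\frac{1}{|A_n|}\int_{A_n} g((t+s).x)\,\overline{\chi_n(t)}\,\dd t
= \frac{\overline{\chi_n(-s)}}{|A_n|}\int_{s+A_n} g(u.x)\,\overline{\chi_n(u)}\,\dd u,
\]
and the difference between integrating over $s+A_n$ and over $A_n$ is controlled, in $L^1$ after integrating $|g(u.x)|$, by $|A_n^{-1}|\int_{A_n\triangle(A_n+s)}|g(u.x)|\,\dd u$, which by the second genericity condition (and Cauchy--Schwarz, using $|\chi_n|=1$) tends to $0$ since $s\in G'$. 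Meanwhile $\chi_n(-s)\to\chi(-s)=\overline{\chi(s)}$. Passing to the limit gives $F(\tau_{-s}g)=\chi(s)F(g)$ for all $s\in G'$ and $g\in E$ — note this uses that $E$ is $G'$-invariant, so $\tau_{-s}g\in E$. Translating this into the Riesz picture yields $\langle \tau_{-s}g, f_\chi\rangle=\chi(s)\langle g,f_\chi\rangle$, i.e. $\langle g, \tau_s f_\chi\rangle=\langle g,\chi(s)f_\chi\rangle$ for all $g\in E$; since this holds on $E$ and both sides are continuous on $\bar E$, and $f_\chi\in\bar E$, we get $\tau_s f_\chi = \chi(s) f_\chi$ in $L^2$ for all $s\in G'$. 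Finally, $G'$ is dense in $G$ and $t\mapsto U_t f_\chi$ is strongly continuous (standard for the Koopman representation of a continuous action; alternatively one notes $\chi$ is continuous), so $U_t f_\chi=\chi(t)f_\chi$ extends from $G'$ to all of $G$.

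For the dichotomy: if $F\equiv 0$ on $E$ then $f_\chi=0$; otherwise $f_\chi\neq 0$ and, by the eigenfunction relation just established, $f_\chi$ is by definition an eigenfunction to $\chi$ (and incidentally $\chi\in\widehat G$ automatically, as remarked earlier in the paper). The main obstacle I anticipate is the careful bookkeeping in the translation step — keeping track of which variable the character is evaluated at, ensuring the error term $|A_n|^{-1}\int_{A_n\triangle(A_n+s)}|g(u.x)|\,\dd u$ is genuinely killed by genericity (one applies Cauchy--Schwarz to bound it by $\big(|A_n|^{-1}|A_n\triangle(A_n+s)|\big)^{1/2}\big(|A_n|^{-1}\int_{A_n\triangle(A_n+s)}|g(u.x)|^2\dd u\big)^{1/2}$, and the first factor is bounded by the Følner property while the second $\to 0$ by the second genericity condition), and finally passing from density of $G'$ to all of $G$ via strong continuity. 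A minor point to get right is the exact conjugation convention linking $F$ to $f_\chi$ so that $\|f_\chi\|_2\le 1$ and the eigenvalue equation come out with the stated normalization.
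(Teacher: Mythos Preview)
Your proposal is correct and follows essentially the same route as the paper: bound $|F(g)|\le\|g\|_2$ via Cauchy--Schwarz and genericity, invoke Riesz on $\bar E$ to obtain $f_\chi$, then compute $F(U_{-s}g)$ by shifting the integration domain and use the second genericity condition to kill the symmetric-difference error, yielding $U_s f_\chi=\chi(s)f_\chi$ on the dense subgroup $G'$ and hence on $G$. (Your opening sentence about defining $f_\chi$ as an $L^2$-limit of averages is a red herring --- you immediately and correctly switch to Riesz --- and there is a harmless sign slip in writing $F(\tau_{-s}g)=\chi(s)F(g)$ rather than $F(\tau_{s}g)=\chi(s)F(g)$, exactly the bookkeeping you flagged.)
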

\begin{proof} Clearly, the map
 $F$ is  linear.  Moreover, for  $g \in E$ and all $n$, by the Cauchy--Schwarz inequality we have
\begin{eqnarray*}
 \left| \frac{1}{|A_n|} \int_{A_n} g(t.x) \overline{\chi_n(t)} \dd t \right| &\leq& \sqrt{ \frac{1}{|A_n|} \int_{A_n} \left|  g(t.x) \right|^2 \dd t} \sqrt{\frac{1}{|A_n|} \int_{A_n} \left|  \chi_n(t) \right|^2 \dd t} \\
  &= & \sqrt{ \frac{1}{|A_n|} \int_{A_n} \left|  g(t.x) \right|^2 \dd t}\\
(x  \mbox{ generic }) 	&\to & \|g\|_2.\\
\end{eqnarray*}
Therefore, we find
\[
|F(g)| =\lim_n \left| \frac{1}{|A_n|} \int_{A_n} g(t.x) \overline{\chi_n(t)} \dd t \right| \leq \|g\|_2 \ .
\]
This means that $F$ is a linear bounded operator on   $E$. Therefore, $F$ can be extended to a bounded operator $F : \overline{E} \to \CC$. By the Riesz- representation theorem, there exists some $f_\chi \in \overline{E}$ such that, for all $g \in E$ we have
\[
F(g)= \langle g, f_\chi \rangle \ .
\]
Since $\|F \|\leq 1$ we get  $\|f_\chi \|_2 \leq 1$.
Moreover,  for all $g \in E$ and $s \in G'$ we have
\begin{eqnarray*}
\langle g, U_{s} f_\chi\rangle &=& \langle U_{-s} g, f_\chi\rangle\\
&=&  F(U_{-s} g)\\
 &=&  \lim_n \frac{1}{|A_n|} \int_{A_n} \tau_{-s} g(t.x) \overline{\chi_n(t)} \dd t\\
&=&  \lim_n \overline{\chi (s)} \frac{1}{|A_n|} \int_{A_n} g((t-s).x) \overline{\chi_n (t-s)} \dd t \\
&=& \lim_n \overline{\chi (s)} \frac{1}{|A_n  |} \int_{A_n-s} g(t.x) \overline{\chi_n (t)} \dd t\\
(\mbox{$x$ generic})\;\: &=& \lim_n \overline{\chi (s)} \frac{1}{|A_n |} \int_{A_n} g(t.x) \overline{\chi_n (t)} \dd t\\
 &=& \overline{\chi(s)}  F(g)\\
&=& \langle g, \chi(s) f_\chi\rangle.
\end{eqnarray*}
This easily gives
$$U_s f_\chi = \chi(s) f_\chi$$
for all $s\in G'$ and then also for all $s\in G$ (by continuity and denseness). Clearly, $f_\chi \neq 0$ if and only if there exists $g\in E$ with $a_g \neq 0$ and the last statement follows.
\end{proof}

\medskip
Next, we discuss the vanishing of Fourier--Bohr coefficients. Let us first introduce the following definition:

\begin{definition}
Let $K\subset \widehat{G}$ and $E\subset \mathcal{L}^2 (X, \mm)$ be given. We say that $E$ is {\rm perpendicular} to $K$, and write $E\perp K$, if  any element of $E$ is perpendicular to any eigenfunction to an eigenvalue from $K$.
\end{definition}

\begin{prop}[Vanishing result] \label{uniform-vanishing} Let $(X,G,  \mm)$ be a measurable dynamical system, let $\cA$ a F\o lner sequence and $G'$ a dense subgroup of $G$. Let $K\subset \widehat{G}$ be compact.

Let $E$ be  $G'$ invariant subspace of $\mathcal{L}^2 (X, \mm) \cap \mathcal{L}^\infty (X)$ with  a countable core and  $E\perp K$. Then,
\[
\lim_n \sup_{\chi \in K} \frac{1}{|A_n|} \left| \int_{A_n} f(t.x) \cdot \overline{\chi(t)} \dd t\right| = 0 \,,
\]
for any $f \in E$ and any $(E,G')$-generic point $x\in X$.
\end{prop}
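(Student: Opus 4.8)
The plan is to argue by contradiction using a compactness argument on $K$, reducing the claimed uniform (over $\chi\in K$) vanishing to the pointwise statement already packaged in Lemma~\ref{lem-eigen-exi}. Suppose the conclusion fails for some fixed $f\in E$ and some $(E,G')$-generic point $x$. Then there is an $\varepsilon>0$, a subsequence of the $A_n$ (which I will relabel), and characters $\chi_n\in K$ with
\[
\frac{1}{|A_n|}\left|\int_{A_n} f(t.x)\cdot\overline{\chi_n(t)}\,\dd t\right| \geq \varepsilon
\]
for all $n$. Since $K$ is compact in $\widehat{G}$, after passing to a further subsequence we may assume $\chi_n\to\chi$ for some $\chi\in K$, and in particular $\chi_n\to\chi$ pointwise on $G$.

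The key step is to extract convergence of the averages along a suitable subsequence so that Lemma~\ref{lem-eigen-exi} applies. First I would note that for each fixed $g$ in the countable core $D$ of $E$, the sequence $n\mapsto a_{\chi_n}^{A_n}(g_x)$ is bounded (by $\|g\|_\infty$, or via Cauchy--Schwarz and genericity, by roughly $\|g\|_2$), so a diagonal argument over the countable set $D$ produces a single subsequence along which $\lim_n a_{\chi_n}^{A_n}(g_x)$ exists for every $g\in D$. A short uniform-approximation estimate — using that every $g\in E$ is a uniform limit of elements of $\mathrm{span}(D)$ and that $\bigl|a_{\chi_n}^{A_n}(h_x)\bigr|\le \|h\|_\infty$ — then upgrades this to existence of $F(g):=\lim_n a_{\chi_n}^{A_n}(g_x)$ for all $g\in E$; along this subsequence the hypothesis of Lemma~\ref{lem-eigen-exi} is met (using that $E$ is $G'$-invariant, $G'$ is dense, and $x$ is $(E,G')$-generic), so there is $f_\chi\in\bar E$ with $\|f_\chi\|_2\le1$, $U_t f_\chi=\chi(t)f_\chi$, and $F(g)=\langle g,f_\chi\rangle$ for all $g\in E$. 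But $E\perp K$ and $\chi\in K$ force $f_\chi$ to be orthogonal to $E$; since $f_\chi\in\bar E$, this gives $f_\chi=0$, hence $F(g)=0$ for all $g\in E$. Applying this with $g=f$ yields $\lim_n a_{\chi_n}^{A_n}(f_x)=0$ along the subsequence, contradicting $|a_{\chi_n}^{A_n}(f_x)|\ge\varepsilon$. This contradiction proves the proposition.

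I expect the main obstacle to be the bookkeeping in the second step: making sure the diagonal subsequence chosen to guarantee convergence of $a_{\chi_n}^{A_n}(g_x)$ for all $g$ in the core is compatible with the requirement that $\chi_n\to\chi$, and that the uniform-approximation passage from $\mathrm{span}(D)$ to all of $E$ is uniform in $n$ (this is where boundedness of the elements of $E$, i.e. $E\subseteq\mathcal L^\infty(X)$, is used, exactly as in the Cauchy--Schwarz computation in the proof of Lemma~\ref{lem-eigen-exi}). One should also check that the shifted-average manipulation establishing $U_s f_\chi=\chi(s)f_\chi$ in Lemma~\ref{lem-eigen-exi} genuinely only used $(E,G')$-genericity in the first (inner-product) sense together with $G'$-invariance of $E$, so that it transfers verbatim here; this is immediate from the cited proof. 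Everything else — compactness of $K$, the Riesz representation step, the final substitution $g=f$ — is routine.
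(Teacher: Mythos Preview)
Your proof is correct and follows essentially the same route as the paper's: contradiction, compactness of $K$ to extract a convergent sequence $\chi_n\to\chi\in K$, a diagonal argument over the countable core $D$ to force existence of the limits $F(g)$ for all $g\in E$, then an appeal to Lemma~\ref{lem-eigen-exi} to produce an eigenfunction $f_\chi\in\bar E$ whose nonvanishing contradicts $E\perp K$. The only cosmetic difference is that the paper phrases the contradiction as ``$\bar E\perp K$ yet $\bar E$ contains an eigenfunction for $\chi\in K$'' rather than your equivalent ``$f_\chi\in\bar E$ is orthogonal to $E$, hence zero.''
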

\begin{proof} Let us assume that this does not happen. Then, there exists some $f \in E$, a $(E,G')$-generic point $x\in X$, a subsequence $(k_n)$ in the natural numbers, some $c>0$ and $\chi_n \in K$ such that
\[
\frac{1}{|A_{k_n}|} |\int_{A_{k_n}} f(t.x) \cdot \overline{\chi_n(t)} \dd t| \geq  c \, .
\]
Since $K$ is compact, without loss of generality we can assume $\chi_n \to \chi \in K$.

Next, let $D \subseteq E$ be a countable core. Then, for each $g \in D \subseteq \cL^\infty(X)$ the sequence
\[
\frac{1}{|A_{k_n}|} \int_{A_{k_n}} g(t.x)\cdot \overline{\chi_n(t)} \dd t
\]
is bounded. As $D$ is countable, via a standard diagonalisation argument, we can find a subsequence $(l_n)$ such that the following limit exists for all $g \in D$
\begin{equation}\label{eq1}
\lim_n \frac{1}{|A_{k_{l_n}}|} \int_{A_{k_{l_n}}} g(t.x) \cdot \overline{\chi_{l_n}(t)} \dd t \,.
\end{equation}
Since $D$ is a core in $E$, the limit in \eqref{eq1} then exists for all $g \in E$.

Moreover,
\[
\lim_n \frac{1}{|A_{k_{l_n}}|} \int_{A_{k_{l_n}}} f(t.x) \cdot \overline{\chi_{l_n}(t)} \dd t  \neq 0 \,.
\]
We can apply the previous lemma to conclude that there exists an eigenfunction to an eigenvalue from $K$ in $\overline{E}$. On another hand, by $E \perp K$ we immediately get  $\overline{E} \perp K$, a contradiction.
\end{proof}

Applying Proposition~\ref{uniform-vanishing} with  $K= \{ \chi \}$ we obtain the following consequence.

\begin{corollary}[Vanishing of Fourier--Bohr coefficient of non-eigenvalues for generic points] \label{cor-van}
 Let $(X,G,m)$ be a measurable dynamical system and $\cA$ a F\o lner sequence and $G'$ a dense subgroup of $G$. Let $\chi\in \widehat{G}$ not be an eigenvalue.   Let $E$ be   $G'$-invariant with a countable core. Then,
$$\lim_n \frac{1}{|A_n|} \int_{A_n}  f(t.x) \cdot \overline{\chi(t)}  \dd t =0$$
holds for any $(E,G')$-generic point $x \in X$ and any $f \in E$. \qed
\end{corollary}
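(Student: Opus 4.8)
The plan is to obtain the statement as the special case $K = \{\chi\}$ of Proposition~\ref{uniform-vanishing}. The steps are short. First I would note that the singleton $\{\chi\}$ is trivially a compact subset of $\widehat{G}$, so the compactness requirement is met. Second, I would verify the perpendicularity hypothesis $E \perp \{\chi\}$: by assumption $\chi$ is not an eigenvalue of $(X,G,\mm)$, so its eigenspace is $\{0\}$; hence there is no nonzero eigenfunction to an eigenvalue from $\{\chi\}$, and the condition ``every element of $E$ is perpendicular to every such eigenfunction'' holds vacuously. Third, I would invoke Proposition~\ref{uniform-vanishing}, which gives, for every $f \in E$ and every $(E,G')$-generic $x \in X$,
\[
\lim_n \, \sup_{\psi \in \{\chi\}} \frac{1}{|A_n|}\left| \int_{A_n} f(t.x)\cdot \overline{\psi(t)}\,\dd t \right| \,=\, 0 \,;
\]
since the supremum runs over a single character, this is exactly $\lim_n \frac{1}{|A_n|}\int_{A_n} f(t.x)\,\overline{\chi(t)}\,\dd t = 0$, which is the assertion of the corollary.

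The one point to check, and the only place where I expect any (mild) friction, is that Proposition~\ref{uniform-vanishing} is stated for subspaces of $\mathcal{L}^2(X,\mm) \cap \mathcal{L}^\infty(X)$, whereas the corollary only assumes $E$ to be a $G'$-invariant subspace of $\mathcal{L}^2(X,\mm)$ with a countable core. I would dispose of this by noting that the boundedness of core elements enters the proof of Proposition~\ref{uniform-vanishing} solely to ensure that the averages $\frac{1}{|A_{k_n}|}\int_{A_{k_n}} g(t.x)\,\overline{\chi_n(t)}\,\dd t$ (with $g$ in a countable core) stay bounded in $n$, so that a diagonal subsequence can be extracted. But this boundedness is already provided by Cauchy--Schwarz together with genericity, since
\[
\left| \frac{1}{|A_n|}\int_{A_n} g(t.x)\,\overline{\chi_n(t)}\,\dd t \right| \,\le\, \left( \frac{1}{|A_n|}\int_{A_n} |g(t.x)|^2\,\dd t \right)^{1/2} \,\longrightarrow\, \|g\|_2 \,,
\]
so the argument of Proposition~\ref{uniform-vanishing}, and hence its conclusion, applies verbatim under the weaker hypothesis.

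Alternatively, and perhaps cleaner to present, I would re-run the contradiction argument directly: if the limit were not $0$, pass to a subsequence along which $\bigl| \frac{1}{|A_{k_n}|}\int_{A_{k_n}} f(t.x)\,\overline{\chi(t)}\,\dd t \bigr| \ge c > 0$, extract a further subsequence (diagonalizing over a countable core, using the Cauchy--Schwarz bound above) along which $F(g) := \lim_n \frac{1}{|A_{k_{l_n}}|}\int_{A_{k_{l_n}}} g(t.x)\,\overline{\chi(t)}\,\dd t$ exists for all $g$ in the core, hence --- as in the proof of Proposition~\ref{uniform-vanishing} --- for all $g \in E$, and then apply Lemma~\ref{lem-eigen-exi} with the constant sequence $\chi_n \equiv \chi$ to produce $f_\chi \in \bar E \setminus \{0\}$ with $U_t f_\chi = \chi(t) f_\chi$ for all $t \in G$, contradicting that $\chi$ is not an eigenvalue. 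Either way there is no substantive difficulty: the corollary is a direct specialization of results already established.
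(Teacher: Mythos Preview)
Your approach is exactly the paper's: the corollary is stated immediately after Proposition~\ref{uniform-vanishing} with the sentence ``Applying Proposition~\ref{uniform-vanishing} with $K=\{\chi\}$ we obtain the following consequence,'' and no further proof is given. Your observation about the missing $\mathcal{L}^\infty$ hypothesis is a genuine (if minor) gap in the paper's statement, and your Cauchy--Schwarz fix is correct and worth keeping.
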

%\begin{proof} This follows with $K =\{\chi\}$ from the previous proposition.
%\end{proof}

\medskip
We can now prove the following result:

\begin{corollary}[Vanishing of Fourier--Bohr coefficients on countable sets] \label{cor-van-fb-d} Let $(X,G,\mm)$ be an ergodic dynamical system with second countable $G$. Let  $\cA =\{ A_n \}$ be a tempered F\o lner sequence.

Let $D \subseteq \mathcal{L}^2 (X, \mm)$ be a countable set. Then, there exists a set $X_0 \subseteq X$ of full measure with
\[
\frac{1}{|A_n|} \int_{A_n}  f(t.x) \cdot \overline{\chi(t)}  \dd t =0
\]
for all $f\in D$, $x\in X_0$  and $\chi \in \widehat{G}\setminus \SSS$.
\end{corollary}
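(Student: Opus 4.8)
The plan is to deduce Corollary~\ref{cor-van-fb-d} from Corollary~\ref{cor-van} by finding, for all countably many $f \in D$ simultaneously, a single full-measure set of points that is generic enough and along which the Fourier--Bohr coefficients at \emph{every} non-eigenvalue $\chi$ vanish. The immediate difficulty is that $\widehat{G}\setminus\SSS$ is typically uncountable, so we cannot simply intersect countably many "bad'' sets indexed by $\chi$; we need a uniform argument over $\chi$. This is exactly what Proposition~\ref{uniform-vanishing} buys us, provided we can cover $\widehat{G}\setminus\SSS$ by countably many compact sets $K$ with $E \perp K$, and provided we have a subspace $E$ with a countable core containing $D$, invariant under a dense subgroup $G'$.

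First I would fix a countable dense subgroup $G' \leq G$; this exists because $G$ is second countable, hence separable. Next I would enlarge $D$ to a countable set $D'$ that is still contained in $\mathcal{L}^2(X,\mm)$, contains $D$, and whose $G'$-orbit is countable and contained in $D'$ --- concretely, take $D' := \{\tau_{-s} f : f \in D,\ s \in G'\}$, which is countable. There is a subtlety here: the lemmas above need the subspace $E$ to lie in $\mathcal{L}^2 \cap \mathcal{L}^\infty$ for the vanishing proposition, whereas $D$ is only assumed in $\mathcal{L}^2$. I would handle this by a truncation argument: it suffices to prove the statement for each $f$ replaced by its truncations $f^{(M)} := f \cdot \mathbf{1}_{\{|f| \le M\}}$ (which are bounded), and then pass to the limit $M \to \infty$ using that $\frac{1}{|A_n|}\int_{A_n}|f(t.x) - f^{(M)}(t.x)|\,\dd t \to \|f - f^{(M)}\|_1 \to 0$ along the full-measure set given by the pointwise ergodic theorem applied to $|f - f^{(M)}|$, uniformly in $\chi \in \widehat G$ since $|\overline{\chi(t)}| = 1$. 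So without loss of generality $D$ consists of bounded functions, and we let $E$ be the $G'$-invariant subspace of $\mathcal{L}^2(X,\mm) \cap \mathcal{L}^\infty(X)$ generated (algebraically, then closed under uniform limits) by $D$; by construction $D$ is a countable core for $E$ and $E$ is $G'$-invariant.

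Now I would use second countability of $G$ once more: $\widehat{G}$ is then $\sigma$-compact (indeed second countable and locally compact), so write $\widehat{G} = \bigcup_{j} K_j$ with each $K_j$ compact. For each $j$ set $K_j' := K_j \setminus \SSS$; this need not be compact, so instead I would argue as follows. The point spectrum $\SSS$ is countable (the eigenspaces are mutually orthogonal closed subspaces of the separable Hilbert space $L^2(X,\mm)$), hence so is the set of "bad'' characters we must avoid. For a \emph{single} non-eigenvalue $\chi$, Corollary~\ref{cor-van} applied to $(E,G')$-generic points already gives vanishing of $a_\chi^{A_n}(f_x)$ for all $f \in E \supseteq D$. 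Thus, by Proposition~\ref{existence-generic}(a), the set $X_1$ of $(E,G')$-generic points has full measure, and by Corollary~\ref{cor-van}, for every fixed $\chi \notin \SSS$ and every $x \in X_1$ and $f \in D$ the Fourier--Bohr coefficient vanishes. Since $X_1$ does not depend on $\chi$, setting $X_0 := X_1$ finishes the proof: the uniformity over the uncountable family of $\chi$'s is automatic because the single generic set $X_1$ works for all of them at once. (If one instead wants to be careful that the limit exists for every $\chi$ and not merely a subsequence, one invokes Proposition~\ref{uniform-vanishing} with $K$ any compact set disjoint from $\SSS$, or observes directly that Corollary~\ref{cor-van}'s conclusion is convergence to $0$, i.e.\ existence of the limit is part of the statement.)

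The main obstacle I expect is the bookkeeping around the function spaces: ensuring that the subspace $E$ one feeds into Corollary~\ref{cor-van} genuinely has a \emph{countable} core, is $G'$-invariant for a \emph{dense} $G'$, and --- if one routes through Proposition~\ref{uniform-vanishing} for the uniform-in-$\chi$ version --- lies in $\mathcal{L}^\infty$, all while still containing the given $\mathcal{L}^2$-set $D$. The truncation step sketched above is what reconciles "$D \subseteq \mathcal{L}^2$'' with the boundedness hypotheses, and the countability of $\SSS$ together with second countability of $G$ is what reconciles "uncountably many $\chi$'' with "countably many full-measure sets''. Everything else is a direct application of the machinery already established: Proposition~\ref{existence-generic}(a) for the full-measure generic set, and Corollary~\ref{cor-van} (a special case of Proposition~\ref{uniform-vanishing}) for the vanishing.
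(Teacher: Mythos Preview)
Your approach is essentially the paper's: fix a countable dense $G' \leq G$, form the $G'$-invariant span $E$ of $\{\tau_s f : f\in D,\ s\in G'\}$ with this set as countable core, obtain a full-measure set of $(E,G')$-generic points via Proposition~\ref{existence-generic}(a), and then invoke Corollary~\ref{cor-van} for every $\chi \notin \SSS$ --- the single generic set works for all $\chi$ at once, exactly as you conclude. The paper skips your truncation step because Corollary~\ref{cor-van} is stated without the $\mathcal{L}^\infty$ hypothesis (for an $(E,G')$-generic $x$ the averages are bounded via Cauchy--Schwarz since $\frac{1}{|A_n|}\int_{A_n}|g(t.x)|^2\,\dd t \to \|g\|_2^2$), and your digression on $\sigma$-compactness of $\widehat{G}$ and countability of $\SSS$ is, as you yourself note, unnecessary.
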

\begin{proof}   As $G$ is second countable it possesses a countable dense subgroup $G'$.

Then,
\[
D':=\{ \tau_t f : f\in D, t\in G'\}
\]
is countable. Let $E$ be the linear span of $D'$. Then, $E$ is a subspace with a countable core that is $G'$-invariant.

By Proposition~\ref{existence-generic} we infer that there exists a set $X_0$ of full measure of $(E,G)$-generic points.

Now, the statement follows from the previous corollary.
\end{proof}

A consequence of the previous corollary   is the following.

\begin{theorem}[Wiener--Wintner] \label{thm-ww} Let $(X,G,\mm)$ be an ergodic  measurable  dynamical system with separable $L^2 (X,\mm)$ and second countable $G$ and let $\cA$ be a tempered  F\o lner sequence.

Let $E \subseteq \mathcal{L}^2 (X,\mm)$  be any subspace with countable core. Then, there exists a set $X_0 \subseteq X$ of full measure so that the following holds:
\begin{itemize}
  \item The Fourier--Bohr coefficient $a_{\chi}^\cA(f_x)$ exists for all $f \in E, x \in X_0$ and $\chi \in \widehat{G}$.
  \item  The Fourier--Bohr coefficient $a_{\chi}^\cA(f_x)$ is zero for all $f \in E, x \in X_0$ and all $\chi \notin \SSS$.
\end{itemize}
\end{theorem}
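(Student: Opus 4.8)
The plan is to treat the two assertions separately. The vanishing of $a_\chi^\cA(f_x)$ for $\chi \notin \SSS$ is, up to the passage from a core to all of $E$, precisely Corollary~\ref{cor-van-fb-d} applied to a countable core of $E$. The genuine content is the \emph{existence} of $a_\chi^\cA(f_x)$ for $\chi \in \SSS$, and here the idea is to trade the oscillating weight $\overline{\chi(t)}$ for an honest modulus-one eigenfunction, thereby reducing to the case $\chi = 1$ covered by the pointwise ergodic theorem.

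First fix a countable core $D$ of $E$. Since $L^2(X,\mm)$ is separable and eigenfunctions to distinct eigenvalues are orthogonal, the point spectrum $\SSS$ is countable; note also $1 \in \SSS$. For each $\chi \in \SSS$ choose an eigenfunction $e_\chi$; as $|e_\chi|$ is $G$-invariant and $\mm$ is ergodic, $|e_\chi|$ is constant a.e., so after rescaling we may assume $|e_\chi| = 1$ a.e., and in particular $g\overline{e_\chi} \in \mathcal{L}^1(X,\mm)$ for every $g \in \mathcal{L}^2(X,\mm)$ by Cauchy--Schwarz. Fix $\chi \in \SSS$ and $g \in D$. By Fubini, for a.e.\ $x$ we have $e_\chi(t.x) = \chi(t) e_\chi(x)$ for a.e.\ $t \in G$; combined with $|e_\chi(x)| = 1$ this gives, for a.e.\ $x$,
\[
g(t.x)\,\overline{\chi(t)} \;=\; e_\chi(x)\,(g\overline{e_\chi})(t.x) \qquad \text{for a.e.\ } t \in G \,.
\]
Applying the pointwise ergodic theorem (Theorem~\ref{thm-point-erg}, valid since $\cA$ is tempered and $\mm$ is ergodic) to $g\overline{e_\chi}$ then yields a set of full measure on which $a_\chi^{A_n}(g_x)$ is defined for all $n$ and $a_\chi^{A_n}(g_x) \to e_\chi(x)\,\langle g, e_\chi \rangle$.

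Now let $X_0$ be the intersection of: the full measure set provided by Corollary~\ref{cor-van-fb-d} for the countable set $D$ (on which $a_\chi^{A_n}(g_x) \to 0$ for all $g \in D$ and all $\chi \in \widehat{G}\setminus\SSS$); the full measure sets just produced, one for each pair $(g,\chi) \in D \times \SSS$; and, for each $\chi \in \SSS$, the full measure set on which $|e_\chi(x)|=1$ and $e_\chi(t.x) = \chi(t)e_\chi(x)$ for a.e.\ $t$. This is a countable intersection, so $X_0$ has full measure. For $x \in X_0$ the sequence $a_\chi^{A_n}(g_x)$ is defined for all $n$ and converges for every $g \in D$ and every $\chi \in \widehat{G}$, with limit $0$ when $\chi \notin \SSS$ (the case $\chi = 1 \in \SSS$ also supplies local integrability of the orbit codings of $g \in D$); by linearity the same statements hold for every $g$ in the linear span of $D$. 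Finally, given $f \in E$, choose $g_k$ in the linear span of $D$ with $\|f - g_k\|_\infty \to 0$; since $|A_n|^{-1}\int_{A_n}|f(t.x) - g_k(t.x)|\,\dd t \le \|f - g_k\|_\infty$, the orbit coding $t\mapsto f(t.x)\overline{\chi(t)}$ at $x \in X_0$ is locally integrable (a uniform limit of such), the sequence $a_\chi^{A_n}(f_x)$ is Cauchy (being uniformly close to the convergent sequences $a_\chi^{A_n}((g_k)_x)$), hence convergent, and its limit is $0$ whenever $\chi \notin \SSS$. This produces the desired $X_0$.

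The only genuinely new ingredient is the displayed identity and the ensuing reduction of the eigenvalue case to the pointwise ergodic theorem; the separability hypothesis enters exactly to make $\SSS$ countable, and the rest is bookkeeping with countable intersections of full measure sets and with uniform approximation by the core.
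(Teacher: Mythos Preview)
Your proof is correct and follows essentially the same strategy as the paper's own proof: invoke Corollary~\ref{cor-van-fb-d} on a countable core for the non-eigenvalue case, and for $\chi\in\SSS$ multiply by a modulus-one eigenfunction to reduce to the pointwise ergodic theorem applied to the countable family $\{g\,\overline{e_\chi}:g\in D,\ \chi\in\SSS\}$. The paper's proof is terser---it does not spell out the Fubini argument behind the eigenfunction identity, the explicit limit $e_\chi(x)\langle g,e_\chi\rangle$, or the uniform-approximation step from $D$ to $E$---but the ingredients and their assembly are the same.
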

\begin{proof}
Let $D \subseteq E$ be a countable core.

By Corollary~\ref{cor-van-fb-d},  we can find a subset $X_1 \subseteq X$ of full measure so that the second bullet point holds for all $f \in D, x \in X_1$ and $\chi \notin \SSS$.

Next, for each $\chi \in \SSS$ we can pick an eigenfunction $f_\chi \in \cL^\infty(X)$. As $L^2(X, \mm)$ is separable, the set $\SSS$ is countable. Moreover, as $\mm$ is ergodic, each eigenspace is 1-dimensional. Therefore,
\[
\{ f_\chi : \chi \in \SSS \}
\]
is a basis for $L^2(X, \mm)_{\mathsf{pp}}$.

Hence, there exists a set $X_2 \subseteq X$ of full measure so that the pointwise  ergodic theorem holds for all $x \in X_2$ and each element from  $\{ h f_\chi : h \in D, \chi \in \SSS \}$.

The set $X_0:=X_1 \cap X_2$ satisfies the result.
\end{proof}

%\begin{remark} We have stated Corollary \ref{cor-van-fb-d} and Theorem \ref{thm-ww} for a countable set $D$. However, it is clear from the proof that they will extend to all elements which are uniform limits of sequences in the linear span of $D$.
%\end{remark}

\section{The topological situation: $E = C(X)$}\label{sect:top}

Let us recall that a \textit{topological dynamical system} $(X,G)$ consists of a compact space $X$ together with a continuous  action $\alpha :  G\times X\longrightarrow X$ of $G$ on $X$.
Clearly, such an action is measurable with respect to the $\Sigma$-algebra of Borel sets. So, we are within the setting of the previous section.

We denote the set of continuous complex valued functions on a compact  topological space $X$ by $C(X)$ and equip it with the supremum norm
$$\|f\|_\infty :=\sup_{y\in Y} |f(y)| \,.$$

\medskip

The considerations of the previous section can  be applied with $E = C(X)$. Moreover, we can then formulate and prove uniform (in $x\in X$) results.

Note here that  $E = C(X)$ is a natural $G$-invariant set for which generic points may be considered. As the functions in $C(X)$ are bounded the vanishing requirement in the genericity definition is automatically satisfied. Moreover, as the constant function $g = 1$ belongs to $C(X)$ the genericity definition in the previous section agrees with the one usually given in the literature.

Specifically, whenever a topological  dynamical system $(X,G,)$, a F\o lner sequence
$(A_n)$ are given, and a $G$-invariant measure $\mm$ are given, a point $x\in X$ is easily be seen to be $(C(X),G)$-generic
with respect to the F\o lner sequence $(A_n)$  if and only if
\begin{equation}\label{eq-birk-et}
\lim_{n\to \infty} \frac{1}{|A_n|} \int_{A_n} f(t .x)\, \dd t =
\int_X  f(y) \, \dd \mm(y)
\end{equation}
holds for all continuous functions $f \in C(X)$. We refer to such points a \textit{generic for $(\mm,\mathcal{A})$} or, if $\mm$ and $\mathcal{A}$ are clear from the context, we just speak  about \textit{generic} points.

While the existence of generic points in general is not clear, when $\mm$ is ergodic the set of generic points along tempered F\o lner sequence has full measure by Proposition~\ref{existence-generic}. In particular, given any
ergodic measure $\mm$ and any F\o lner sequence $\cA$, there exists a subsequence $\cA'$ of $\cA$ such that the set of generic points for $(\mm,\mathcal{A})$ has full measure in $X$.

\medskip

Existence of a countable core was crucial for certain application in the last section. In the topological situation with $E = C(X)$, this is ensured by metrizability of $X$.  Indeed, as metrizability of $X$ is equivalent to $C(X)$ having a countable dense subset  (with respect to the supremum norm) we immediately infer the following statement.

\begin{prop}\label{prop4.1} Let $(X,G)$ be a topological dynamical system with metrizable $X$. Then, there exists a countable set $D \subseteq C(X)$ such that $D$ is a countable core for all $G$-invariant probability measures $\mm$ on $X$. \qed
\end{prop}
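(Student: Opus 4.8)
The plan is to reduce the statement to the classical equivalence ``$X$ compact metrizable $\Longleftrightarrow$ $C(X)$ separable for $\|\cdot\|_\infty$''. First I would fix, once and for all, a countable set $D\subseteq C(X)$ that is dense in $C(X)$ with respect to the supremum norm; such a $D$ exists precisely because $X$ is compact and metrizable. If one prefers an explicit construction instead of quoting this equivalence, take a countable base $\{V_k\}$ of the topology of $X$, use Urysohn's lemma to produce for each pair $(V_j,V_k)$ with $\overline{V_j}\subseteq V_k$ a continuous function separating $\overline{V_j}$ from $X\setminus V_k$, adjoin the constant function $1$, and let $D$ consist of all finite products of these functions with rational coefficients; Stone--Weierstrass then yields that the linear span of $D$ is dense in $(C(X),\|\cdot\|_\infty)$, which is all we shall need.

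Next I would observe that this single $D$ works for every $G$-invariant probability measure $\mm$ simultaneously. Fix such an $\mm$. Since $X$ is compact, each $f\in C(X)$ is bounded, and since $\mm$ is a probability (hence finite) measure, $\int_X |f(y)|^2\,\dd \mm(y)<\infty$; thus $C(X)\subseteq \mathcal{L}^2(X,\mm)$ and $C(X)$ is a subspace of $\mathcal{L}^2(X,\mm)$ to which the definition of a core applies. By that definition we must verify that every $f\in C(X)$ is a uniform limit of a sequence taken from the linear span of $D$. But density of $D$ in $(C(X),\|\cdot\|_\infty)$ already furnishes a sequence $(f_n)$ in $D$ itself --- a fortiori in $\mathrm{span}\,D$ --- with $\|f-f_n\|_\infty\to 0$, which is exactly uniform convergence on $X$ in the sense used in the paper. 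Hence $D$ is a core for $C(X)$ with respect to $\mm$, and since $\mm$ was arbitrary among the $G$-invariant probability measures, the proof is complete.

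The only point that genuinely needs to be checked --- and it is immediate --- is that the notion of ``uniform limit'' entering the definition of a core is precisely sup-norm convergence on $X$, and that sup-norm convergence makes no reference whatsoever to $\mm$; this is what lets the one countable set $D$ serve all invariant probability measures at once. There is thus no real analytic obstacle: the content of the proposition is entirely the metrizability--separability equivalence for $C(X)$, which is standard.
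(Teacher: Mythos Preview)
Your argument is correct and is exactly the approach the paper takes: it simply invokes the equivalence between metrizability of the compact space $X$ and separability of $(C(X),\|\cdot\|_\infty)$, noting that uniform convergence does not involve $\mm$, so one countable dense set serves as a core for every invariant measure. Your optional explicit Stone--Weierstrass construction is more than the paper provides, but the underlying idea is identical.
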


\begin{remark} Even if  $X$ is not metrizable, it is still possible that $C(X)$ has a countable core for specific measures $\mm$. This would hold for example if $\mm$ is supported on a finite orbit.
\end{remark}

Given this discussion, the results of the previous sections can all be applied with  $E = C(X)$ and $G' = G$  (and the additional assumption of metrizability of $X$ if necessary). We refrain from spelling this out here but rather  only state the following immediate consequence of Proposition~\ref{prop4.1}  and   Proposition \ref{uniform-vanishing}.

\begin{corollary}[Vanishing of Fourier--Bohr in generic points]  Let $(X,G)$ be a  topological dynamical system with metrizable $X$. Let $\mm$ be a $G$-invariant measure and let $\mathcal{A}$ be a F\o lner sequence.

Let $K$ be a compact subset of $\widehat{G}\setminus \SSS$. Then,
$$\lim_n \sup_{\chi \in K} \frac{1}{|A_n|} |\int_{A_n} f(t.x) \cdot \overline{\chi(t)} \dd t| = 0$$
for any $f\in C(X)$ and  any generic point $x\in X$. \qed
\end{corollary}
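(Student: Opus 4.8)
The plan is to derive this corollary by combining the machinery already assembled in Sections~\ref{sect:main} and~\ref{sect:top}. The key point is that the hypotheses of Proposition~\ref{uniform-vanishing} are met in the topological setting once we take $E = C(X)$ and $G' = G$.

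\textbf{Setup and verification of hypotheses.} First I would note that $(X,G)$, being a topological dynamical system, is in particular a measurable dynamical system with respect to the Borel $\sigma$-algebra, and the $G$-invariant measure $\mm$ makes $(X,G,\mm)$ a measurable dynamical system in the sense of Section~\ref{sec-Basics}. Now set $E := C(X)$ and $G' := G$. Since $X$ is compact, every continuous function on $X$ is bounded, so $E \subseteq \mathcal{L}^2(X,\mm) \cap \mathcal{L}^\infty(X)$; moreover $E$ is visibly a subspace, and it is $G$-invariant because the action is continuous (so $\tau_t f \in C(X)$ whenever $f \in C(X)$). Trivially $G' = G$ is a dense subgroup of $G$. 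By Proposition~\ref{prop4.1}, metrizability of $X$ guarantees that $C(X)$ has a countable dense subset in the supremum norm, hence a countable core for the measure $\mm$. Finally, $K$ is assumed compact in $\widehat{G}\setminus\SSS$, so no eigenvalue of $(X,G,\mm)$ lies in $K$; therefore there are no eigenfunctions to eigenvalues from $K$, and the condition $E \perp K$ holds vacuously (every element of $E$ is perpendicular to the empty set of such eigenfunctions).

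\textbf{Applying the vanishing result.} With all hypotheses of Proposition~\ref{uniform-vanishing} in place, I would invoke it directly: for every $f \in E = C(X)$ and every $(E,G')$-generic point $x \in X$,
\[
\lim_n \sup_{\chi \in K} \frac{1}{|A_n|}\left| \int_{A_n} f(t.x)\cdot\overline{\chi(t)}\,\dd t \right| = 0 \,.
\]
It remains to observe that, in the topological setting with $E = C(X)$ and $G' = G$, being $(E,G')$-generic is exactly being generic for $(\mm,\mathcal{A})$ in the usual sense of \eqref{eq-birk-et}: this equivalence is recorded in the discussion preceding Proposition~\ref{prop4.1} (the vanishing clause in the genericity definition is automatic since functions in $C(X)$ are bounded, and taking $g = 1 \in C(X)$ recovers the classical Birkhoff-type averaging condition). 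Substituting ``generic point'' for ``$(C(X),G)$-generic point'' yields precisely the stated conclusion.

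\textbf{Main obstacle.} There is essentially no analytic obstacle here — the corollary is a packaging of Propositions~\ref{prop4.1} and~\ref{uniform-vanishing}. The only point requiring a moment's care is the bookkeeping that $C(X)$ genuinely satisfies the hypothesis $E \subseteq \mathcal{L}^2(X,\mm)\cap\mathcal{L}^\infty(X)$ and that $E \perp K$ holds for the trivial reason that $K$ contains no eigenvalue; one should also be slightly attentive that the countable core furnished by Proposition~\ref{prop4.1} works uniformly for the given $\mm$ (which it does, since a supremum-norm dense subset is a core for every $G$-invariant probability measure simultaneously). Once these are noted, the proof is immediate.
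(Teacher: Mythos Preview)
Your proposal is correct and follows exactly the approach indicated in the paper: the corollary is stated there without proof (just a \qed), with the sentence preceding it saying it is an immediate consequence of Proposition~\ref{prop4.1} and Proposition~\ref{uniform-vanishing}. Your verification that $E=C(X)$ and $G'=G$ satisfy all the hypotheses of Proposition~\ref{uniform-vanishing}, together with the identification of $(C(X),G)$-generic points with generic points from the discussion before Proposition~\ref{prop4.1}, is precisely what is needed.
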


\begin{remark} In the case of metrizable $X$ and an ergodic measure $\mm$  it is a  rather direct  consequence of the classical Wiener--Wintner theorem that for a set of full measure we have vanishing of Fourier Bohr coefficients for all $f\in C(X)$  and $\chi \in \widehat{G} \backslash \SSS$.  However, to the best of our knowledge  it is a  new result that this set of points contains the  generic points.
\end{remark}

We now turn to a strengthening of the results of the previous section. This strengthening concerns a uniform (in $x\in X$) version of the results there. The key is the following topological variant of the basic observation.

\begin{prop}[Topological variant of the basic observation]\label{existence-m} Let $(X,G)$ be topological dynamical system and $\cA$ a F\o lner sequence. Assume that $X$ is metrizable.

Let $x_n  \in X$ and $\chi_n \in \widehat{G}$, be so that
\[
\lim_n \chi_n = \chi \in \widehat{G} \,.
\]
Then, there exists a subsequence $\cB=(B_n)_n$ of $\cA$, a $G$-invariant probability measure $\mm$ on $X$ and some $f_\chi \in L^2 (X, \mm)$ with  the following properties:
\begin{itemize}
  \item[(a)] For all $t \in G$ we have
  \[
  U_t f_\chi = \chi(t) f_\chi \,.
  \]
  \item[(b)] For all $f \in C(X)$ we have
  $$\lim_n \frac{1}{|B_n|} \int_{B_n} f(t.x_n) \dd t = \int_X f(y)  \dd \mm(y) \,.$$
  \item[(c)] For all $f \in C(X)$ we have
  $$\lim_n \frac{1}{|B_n|} \int_{B_n} f(t.x_n) \cdot \overline{\chi_n(t)} \dd t= \langle f, f_\chi\rangle \,.$$
\end{itemize}
\end{prop}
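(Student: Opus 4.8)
The plan is to obtain the measure $\mm$ and the subsequence $\cB$ by a compactness/diagonalisation argument, and then to feed the resulting data into Lemma~\ref{lem-eigen-exi} to produce $f_\chi$. First I would recall that $C(X)$ is separable (since $X$ is metrizable and compact), so fix a countable dense subset $D \subseteq C(X)$, which we may take to be a $\QQ[i]$-linear (or even $\CC$-linear over a countable subfield) span closed under the countably many group translates $\tau_t$, $t \in G'$, for a fixed countable dense subgroup $G' \leq G$; enlarge $D$ to contain the constant function $1$. For each $g \in D$ the scalars $\frac{1}{|A_n|}\int_{A_n} g(t.x_n)\,\dd t$ and $\frac{1}{|A_n|}\int_{A_n} g(t.x_n)\overline{\chi_n(t)}\,\dd t$ are bounded in $n$ (by $\|g\|_\infty$), so by a standard diagonal argument over the countable set $D$ I extract a single subsequence $\cB = (B_n)$ along which both limits exist for every $g \in D$, hence (by uniform approximation and $\|\cdot\|_\infty$-continuity of the averaging functionals) for every $g \in C(X)$.

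Next I would identify the first limit as integration against a measure. The map $g \mapsto \lim_n \frac{1}{|B_n|}\int_{B_n} g(t.x_n)\,\dd t$ is a positive linear functional on $C(X)$ sending $1$ to $1$, so by Riesz representation it equals $\int_X g\,\dd\mm$ for a unique Borel probability measure $\mm$ on $X$; this is exactly statement (b). Invariance of $\mm$ under the $G$-action follows from the F\o lner property of $\cB$: for $s \in G$ and $g \in C(X)$, the difference $\frac{1}{|B_n|}\int_{B_n}\big(g((t+s).x_n)-g(t.x_n)\big)\,\dd t$ is bounded by $\|g\|_\infty \frac{|B_n \triangle (B_n+s)|}{|B_n|} \to 0$, giving $\int_X (\tau_s g)\,\dd\mm = \int_X g\,\dd\mm$; since $C(X)$ is dense in $L^1$ and separates measures, $\mm$ is $G$-invariant. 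Thus $(X,G,\mm)$ is a measurable (in fact topological) dynamical system to which the machinery of Section~\ref{sect:main} applies.

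Now I would set up the hypotheses of Lemma~\ref{lem-eigen-exi}. Let $E := \mathrm{span}\{\tau_t g : g \in D,\ t \in G'\} \subseteq C(X) \subseteq \mathcal{L}^2(X,\mm)$; this is a countable-core, $G'$-invariant subspace. The subtlety here is that Lemma~\ref{lem-eigen-exi} is phrased for a \emph{single} point $x$ that is $(E,G')$-generic, whereas we have a sequence $x_n$ and averages $\frac{1}{|B_n|}\int_{B_n} g(t.x_n)\overline{\chi_n(t)}\,\dd t$. I expect this to be the main obstacle, and the fix is to re-run the proof of Lemma~\ref{lem-eigen-exi} almost verbatim with $\frac{1}{|A_n|}\int_{A_n} g(t.x)\cdots$ replaced throughout by $\frac{1}{|B_n|}\int_{B_n} g(t.x_n)\cdots$. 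Concretely: the functional $F(g) := \lim_n \frac{1}{|B_n|}\int_{B_n} g(t.x_n)\overline{\chi_n(t)}\,\dd t$ is linear on $E$, and Cauchy--Schwarz together with (b) applied to $|g|^2 \in C(X)$ gives $|F(g)| \le \sqrt{\int_X |g|^2\,\dd\mm} = \|g\|_2$, so $F$ extends to a bounded functional on $\bar{E} \subseteq L^2(X,\mm)$ and Riesz gives $f_\chi \in \bar{E}$ with $F(g) = \langle g, f_\chi\rangle$ and $\|f_\chi\|_2 \le 1$; this is (c) on $D$, hence on $C(X)$ by density. For the eigenfunction property (a), I reproduce the displayed chain of equalities in the proof of Lemma~\ref{lem-eigen-exi}: for $g \in E$ and $s \in G'$, using $\langle g, U_s f_\chi\rangle = \langle U_{-s}g, f_\chi\rangle = F(\tau_{-s}g)$, the substitution $t \mapsto t-s$ (which pulls out $\overline{\chi_n(s)} \to \overline{\chi(s)}$ since $\chi_n \to \chi$ pointwise), and the F\o lner property of $\cB$ to replace $B_n - s$ by $B_n$ at the cost of a vanishing error controlled via (b) applied to $|g|^2$, one gets $\langle g, U_s f_\chi\rangle = \overline{\chi(s)} F(g) = \langle g, \chi(s) f_\chi\rangle$; since this holds for all $g$ in a dense subspace of $\bar E$ and $f_\chi \in \bar E$ is invariant (being an eigenvector candidate one checks $U_s f_\chi \in \bar E$ too, or argues on the whole space using that $E$ generates), $U_s f_\chi = \chi(s) f_\chi$ for $s \in G'$, and then for all $s \in G$ by density of $G'$ and strong continuity of $t \mapsto U_t$. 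This yields (a) and completes the proof.

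One point worth flagging in the write-up: to push the F\o lner-error estimate through in the shifted averages one needs, for each fixed $s$, that $\frac{1}{|B_n|}\int_{B_n \triangle (B_n+s)} |g(t.x_n)|^2\,\dd t \to 0$; this follows because $|g|^2 \in C(X)$ is bounded and $\frac{|B_n \triangle (B_n+s)|}{|B_n|} \to 0$, so no separate genericity-of-points hypothesis is needed — the uniform boundedness of $|g|^2$ does the work that the second bullet of $(E,G')$-genericity does in Lemma~\ref{lem-eigen-exi}. The uniqueness of $f_\chi$ is immediate from the Riesz representation, and the dichotomy $f_\chi = 0$ versus $f_\chi$ an eigenfunction is inherited from Lemma~\ref{lem-eigen-exi} (or re-derived: $f_\chi \neq 0$ iff $F \not\equiv 0$ iff some $g \in C(X)$ has nonzero limiting twisted average).
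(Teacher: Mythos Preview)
Your proposal is correct and follows essentially the same route as the paper: diagonalise over a countable dense subset of $C(X)$ to extract a subsequence along which both the plain and twisted averages converge, use Riesz to produce the invariant probability measure $\mm$ and the function $f_\chi$, and then rerun the eigenfunction computation from Lemma~\ref{lem-eigen-exi} with the sequence $(x_n)$ in place of a single generic point. The paper's proof is slightly leaner in that it does not introduce the auxiliary countable subgroup $G'$ or close $D$ under $G'$-translates; since every $g \in C(X)$ is bounded, the F\o lner error in the shifted average is controlled directly by $\|g\|_\infty \cdot |B_n \triangle (B_n+s)|/|B_n|$ for \emph{every} $s \in G$, so one can run the eigenfunction identity for all $s$ at once without passing through a dense subgroup.
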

\begin{proof}
Let $D$ be a countable dense subset of $C(X)$. Then, for any F\o lner sequence $F_n$ and each $g \in D$, both sequences
\begin{align*}
&\frac{1}{|A_n|} \int_{A_n} g (t.x_n) \dd t \qquad \mbox{ and } \\
&\frac{1}{|A_n|}\int_{A_n} g (t.x_n) \cdot \overline{\chi_n (t)} \dd t
\end{align*}
are bounded by $\| g\|_\infty$. Therefore, $\{ A_n \}$ has subsequences for which these sequences  converge.
Via a standard diagonalisation argument, we can then find a subsequence $\cB=\{ A_{k_n} \}$ of $\cA$ such that for all $g \in D$ we have
\begin{itemize}
\item  the sequence $\frac{1}{|A_{k_n}|} \int_{A_{k_n}} g (t.x_{k_n}) \dd t$ converges to some number $L(g)$ and
\item  the sequence $\frac{1}{|A_{k_n}|} \int_{A_{k_n}} g (t.x_{k_n}) \cdot \overline{\chi_n(t)}  \dd t$ converges to some $a (g)$.
\end{itemize}
Now,  clearly the set of $g\in C(X)$ for which both these sequences converge  is  closed in $C(X)$.

Thus, we obtain that the sequence
\[
\frac{1}{|A_{k_n}|} \int_{A_{k_n}} f (t.x_{k_n}) \dd t
\]
converges to some number $L(f)$ for all $f \in C(X)$ and the sequence
\[
\frac{1}{|A_{k_n}|} \int_{A_{k_n}} f (t.x_{k_n}) \cdot  \overline{\chi_n(t)}  \dd t
\]
converges to some number $a(f)$ for all $f\in C(X)$.

It is clear that for all $f \in C(X)$ with $f \geq 0$ we have $L(f)\geq 0$ and that
\[
L(1_{X}) =1 \,.
\]
Thus, by the Riesz- representation theorem, there exists a Radon probability measure $\mm$  on $X$ so that
\[
L(f) = \int_{X} f(y) \dd y \qquad \forall f \in C(X) \,.
\]
The F\o lner property gives that $\mm$ is $G$-invariant. This proves (b).

Next, a standard application of Cauchy-Schwarz inequality (see proof of the lemma for the basic observation) gives
$$|a(f)|\leq \|f\|_2 \,.$$
It follows that $a$ is a linear bounded map on $C(X)\subset L^2 (X, \mm)$, and hence it can be extended to a bounded linear map
\[
a: L^2(X, \mm) \to \CC \,.
\]
Therefore, by Riesz- lemma, we can find an $f_\chi\in L^2 (X, \mm )$ such that, for all $f \in C(X)$ we have
$$a(f) = \langle f, f_\chi\rangle \,.$$
This gives (c).

A similar argument as in the proof of  Lemma~\ref{lem-eigen-exi} gives that $f_\chi$ satisfies
\[
U_s f_\chi = \chi(s) f_\chi \qquad \forall s\in G \,.
\]
This shows (a) and completes the proof.
\end{proof}

The proposition has the following consequence on vanishing of Fourier--Bohr coefficients uniform in $x\in X$:

\begin{lemma}[Uniform vanishing] Let $(X,G)$ be a dynamical system with metrizable $X$ and let $\cA$ be a F\o lner sequence. Let $\chi \in \widehat{G}$.
Then, the following assertions are equivalent:
\begin{itemize}
  \item[(i)] For each $G$-invariant measure $\mm$ on $X$, $\chi$ is not an eigenvalue for $\mm$.
  \item[(ii)] For each ergodic measure $\mm$ on $X$, $\chi$ is not an eigenvalue for $\mm$.
  \item[(iii)]  For each $f\in C(X)$ and any sequence $\chi_n \in \widehat{G}$ converging pointwise to $\chi \in\widehat{G}$  we have
\[
\lim_n \frac{1}{|A_n|} \int_{A_n}  f(t.x) \cdot \overline{\chi_n(t)} \dd t =0
\]
uniformly in $x\in X$.
\end{itemize}
\end{lemma}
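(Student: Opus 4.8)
The plan is to prove the two equivalences (i)$\Leftrightarrow$(ii) and (i)$\Leftrightarrow$(iii). The first is a soft statement about invariant measures and ergodic decomposition, while the second rests on the two substantial tools already assembled in this section: Proposition~\ref{existence-m} for the direction (i)$\Rightarrow$(iii), and the von Neumann mean ergodic theorem (recalled in Section~\ref{sec-Basics}) for the direction (iii)$\Rightarrow$(i).

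For (i)$\Leftrightarrow$(ii), the implication (i)$\Rightarrow$(ii) is immediate since every ergodic measure is $G$-invariant. For (ii)$\Rightarrow$(i) I would argue by contraposition: if some $G$-invariant probability measure $\mm$ has $\chi$ as an eigenvalue, I pick an eigenfunction $f$ with $\|f\|_{L^2(X,\mm)}=1$ and use the ergodic decomposition $\mm=\int\mm_\omega\,\dd\nu(\omega)$. For each fixed $t\in G$ the identity $\int_X|f(t.x)-\chi(t)f(x)|^2\,\dd\mm(x)=0$ disintegrates to $U_tf=\chi(t)f$ in $L^2(X,\mm_\omega)$ for $\nu$-a.e.\ $\omega$; since $t\mapsto U_tf$ is norm continuous into each $L^2(X,\mm_\omega)$ (joint continuity of the action, metrizability of $X$) and $G$ is $\sigma$-compact, this is upgraded to the relation holding for all $t\in G$ simultaneously, for $\nu$-a.e.\ $\omega$. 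As $\int_\Omega\|f\|_{L^2(X,\mm_\omega)}^2\,\dd\nu(\omega)=1$, some ergodic component $\mm_\omega$ has $f\neq0$, hence $\chi$ as an eigenvalue, contradicting (ii).

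For (i)$\Rightarrow$(iii) I would assume (iii) fails, so that there are $f\in C(X)$, characters $\chi_n\to\chi$ pointwise, a constant $\delta>0$, and (after passing to a subsequence) points $x_n\in X$ with $\bigl|\,|A_n|^{-1}\int_{A_n}f(t.x_n)\overline{\chi_n(t)}\,\dd t\,\bigr|\geq\delta$. Feeding $(x_n)$ and $(\chi_n)$ into Proposition~\ref{existence-m} yields a subsequence $\cB=(B_n)$ of $\cA$, a $G$-invariant probability measure $\mm$, and $f_\chi\in L^2(X,\mm)$ with $U_tf_\chi=\chi(t)f_\chi$ for all $t$ and $|B_n|^{-1}\int_{B_n}f(t.x_n)\overline{\chi_n(t)}\,\dd t\to\langle f,f_\chi\rangle$; the lower bound then passes to the limit and forces $|\langle f,f_\chi\rangle|\geq\delta$, so $f_\chi\neq0$ is an eigenfunction of $(X,G,\mm)$ to $\chi$, contradicting (i). Conversely, for (iii)$\Rightarrow$(i) I would fix an arbitrary $G$-invariant $\mm$ and $f\in C(X)$ and apply (iii) with the constant sequence $\chi_n\equiv\chi$: the functions $x\mapsto|A_n|^{-1}\int_{A_n}f(t.x)\overline{\chi(t)}\,\dd t$ tend to $0$ uniformly on $X$, hence also in $L^2(X,\mm)$, while von Neumann's theorem identifies their $L^2(X,\mm)$-limit as $P_\chi f$, the projection onto the eigenspace to $\chi$. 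Thus $P_\chi f=0$ for every $f\in C(X)$; by density of $C(X)$ in $L^2(X,\mm)$ and boundedness of $P_\chi$ this gives $P_\chi=0$, so $\chi$ is not an eigenvalue for $\mm$.

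The only step I expect to demand real care is the ergodic-decomposition argument in (ii)$\Rightarrow$(i): one starts with the eigenfunction relation on each component for each individual $t$, and turning this into the relation holding, on one and the same full-measure set of components, for \emph{all} $t\in G$ at once is exactly the place where strong continuity of the Koopman representation on $L^2$ (together with $\sigma$-compactness of $G$) enters. Everything else — the two contradiction arguments — is routine functional analysis once Proposition~\ref{existence-m} and the von Neumann theorem are available, which is precisely the division of labour the section has set up.
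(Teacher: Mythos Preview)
Your proposal is correct, and for (i)$\Rightarrow$(iii) it coincides with the paper's argument (contradiction via Proposition~\ref{existence-m}). The genuine difference is in the converse direction: the paper closes the cycle by proving (iii)$\Rightarrow$(ii) directly---it fixes an ergodic $\mm$ with eigenvalue $\chi$, picks a normalized eigenfunction $f_\chi$ and an $f\in C(X)$ with $\langle f,f_\chi\rangle\neq 0$, passes to a tempered subsequence of $\cA$ to invoke the \emph{pointwise} ergodic theorem, and derives $\langle f,f_\chi\rangle=f_\chi(x)\cdot 0=0$ on a set of full measure. Your route instead proves (iii)$\Rightarrow$(i) via von~Neumann's \emph{mean} ergodic theorem: uniform convergence to $0$ on $X$ implies $L^2(X,\mm)$-convergence to $0$, which von~Neumann identifies with $P_\chi f$, so $P_\chi=0$ by density of $C(X)$. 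This is cleaner---it avoids passing to a subsequence, avoids choosing a pointwise-good representative of $f_\chi$, and works directly for arbitrary invariant $\mm$ rather than only ergodic ones. The price is that you then need (ii)$\Rightarrow$(i) separately, which the paper dispatches by a bare citation to Choquet theory; your disintegration argument is more explicit but, as you note, requires some care with the null sets in $t$. Either organization is fine; yours trades a pointwise ergodic-theorem application for a mean one, which is a net simplification.
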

\begin{proof}
(ii) $\Longrightarrow$ (i) follows from a standard application of Choquet Theorem \cite{Phe} (see \cite{FRS} for the details).

\smallskip

(i) $\Longrightarrow$ (iii) Assume by contradiction that there exists some  $f \in C(X)$  so that
\[
\frac{1}{|A_n|} \int_{A_n} f(t.x) \cdot \overline{\chi_n(t)} \dd t
\]
 does not converge to $0$ uniformly in $x\in X$. Then, we can find some $C>0$ and some $x_{k_n}$ in $X$ so that for all $n$ we have
\[
\left|\frac{1}{|A_{k_n}|} \int_{A_{k_n}} f(t.x_{k_n}) \cdot \overline{\chi_{k_n}(t)} \dd t \right| \geq C \,.
\]
As $\| f \|_\infty < \infty$, we can find a subsequence $(l_n)$ of $(k_n)$ such that
\[
\lim_n \frac{1}{|A_{l_n}|} \int_{A_{l_n}}  f(s.x_{l_n}) \cdot  \overline{\chi_{l_n}(s)}  \dd s  = a \neq 0 \,.
\]
Proposition~\ref{existence-m} implies that there exists a $G$-invariant probability measure $\mm$ so that $\chi$ is an eigenvalue of $\mm$. This is a contradiction to (i).

\smallskip

(iii) $\Longrightarrow$ (ii) Assume by contradiction that $\chi$ is an eigenvalue for some ergodic measure $\mm$. Pick a normalized eigenfunction $f_\chi$.
Chose some $f \in C(X)$ so that $\langle f, f_\chi \rangle \neq 0$.

Next, by Theorem~\ref{thm-point-erg}, we can pick a subsequence $ (A_{k_n})$ of $\cA$ along which the Birkhoff's ergodic theorem holds.

Then, there exists some set $X_0 \subseteq X$ with $\mm(X_0)=1$ so that for all $x \in X_0$ we have
\[
\langle f, f_\chi \rangle =f_{\chi}(x) \lim_n \frac{1}{|A_{k_n}|} \int_{A_{k_n}} f(t.x) \cdot \overline{\chi(t)}\dd t \, .
\]
Fix some $x \in X_0$. Then, (iii) gives
\[
\lim_n \frac{1}{|A_{k_n}|} \int_{A_{k_n}}f(t.x)\cdot \overline{\chi(t)} \dd t=0 \,.
\]
Therefore,
\[
0 \neq \langle f, f_\chi \rangle = f_\chi(x) \cdot 0 =0 \,,
\]
which is a contradiction.
\end{proof}

\begin{remark} To the best of the authors knowledge this is the first result on uniform (in $x\in X$) vanishing of Fourier--Bohr coefficients outside of the uniquely ergodic situation. For the uniquely ergodic situation we recover some previous results as discussed next.
\end{remark}

Recall that a dynamical system $(X,G)$ is \textit{uniquely ergodic} if it admits just one invariant probability measure.
For such systems, conditions (i) and (ii) of Lemma~\ref{uniform-vanishing} involves just one measure $\mm$ and we obtain the following consequence that  recovers a  theorem of \cite{Len}, which in turn  generalizes \cite[Theorem~1.1]{Rob} (for $G=\ZZ$) as well as a result of Assani \cite{Ass}.

\begin{corollary} Let $(X, G)$ be a uniquely ergodic metrizable dynamical system. Let $K$ be a compact subset of $\widehat{G}\setminus \SSS$. Then, for all $f \in C(X)$ we have

\[
\lim_n \sup_{x\in X} \sup_{\chi \in K} \frac{1}{|A_n|} \left| \int_{A_n} f(t.x) \cdot \overline{\chi(t)} \dd t \right| =0  \,.
\] \qed
\end{corollary}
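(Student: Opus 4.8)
The plan is to deduce this statement from the preceding Lemma (Uniform vanishing) by a standard compactness/subsequence argument, exactly parallel to the proof of Proposition~\ref{uniform-vanishing} but carried out uniformly in $x$. First I would argue by contradiction: suppose the supremum does not converge to $0$. Then there is an $f\in C(X)$, a constant $c>0$, a subsequence $(A_{k_n})$, points $x_n\in X$ and characters $\chi_n\in K$ with
\[
\frac{1}{|A_{k_n}|}\left|\int_{A_{k_n}} f(t.x_n)\cdot\overline{\chi_n(t)}\,\dd t\right|\ge c
\]
for all $n$. Since $K$ is compact, after passing to a further subsequence we may assume $\chi_n\to\chi$ for some $\chi\in K\subseteq\widehat G\setminus\SSS$; note the limit $\chi$ lies in $K$ because $K$ is closed.

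Next I would invoke the equivalence in the Lemma on uniform vanishing. Since $(X,G)$ is uniquely ergodic, its only invariant probability measure is $\mm$, and by hypothesis $K\cap\SSS=\emptyset$, so $\chi$ is not an eigenvalue for $\mm$; thus condition~(i) of that Lemma holds for this $\chi$. Condition~(iii) therefore holds as well: for \emph{this} $\chi$ and \emph{this} sequence $\chi_n\to\chi$ we have
\[
\lim_n \frac{1}{|A_n|}\int_{A_n} f(t.x)\cdot\overline{\chi_n(t)}\,\dd t = 0
\]
uniformly in $x\in X$. In particular the convergence is uniform along the subsequence $(A_{k_n})$ and over all choices of base point, so
\[
\sup_{x\in X}\frac{1}{|A_{k_n}|}\left|\int_{A_{k_n}} f(t.x)\cdot\overline{\chi_n(t)}\,\dd t\right|\to 0,
\]
which contradicts the lower bound $c>0$ above. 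This contradiction proves the claim.

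The only genuinely delicate point is the handling of the double supremum over $x$ and over $\chi\in K$ \emph{simultaneously}: the Lemma is phrased for a single limiting $\chi$ with an arbitrary approximating sequence $\chi_n$, so the contradiction hypothesis must be set up to extract \emph{both} a convergent sequence $\chi_n\to\chi$ in $K$ \emph{and} a sequence $x_n$ realizing (up to $c$) the supremum — which is exactly what a diagonal/subsequence extraction gives, since each quantity $\frac{1}{|A_n|}|\int_{A_n} f(t.x)\overline{\chi(t)}\,\dd t|$ is bounded by $\|f\|_\infty$. I expect no further obstacle: once $\chi_n\to\chi\in K\setminus\SSS$ is secured, the Lemma does all the work, and unique ergodicity is used only to guarantee that $\chi\notin\SSS$ is equivalent to $\chi$ being a non-eigenvalue for the unique invariant measure.
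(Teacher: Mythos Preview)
Your argument is correct and is precisely the intended deduction: the paper states the Corollary with a bare \qed, treating it as an immediate consequence of the preceding Lemma (Uniform vanishing), and your contradiction/compactness extraction is exactly how one cashes that out. The one bookkeeping point worth making explicit is that the Lemma's condition~(iii) is indexed by the full F\o lner sequence $(A_n)$, whereas your $\chi_n$ are indexed by the subsequence $(A_{k_n})$; you handle this implicitly, but it is cleanest to extend $(\chi_n)$ to a full sequence $(\psi_m)$ with $\psi_{k_n}=\chi_n$ and $\psi_m=\chi$ otherwise, so that $\psi_m\to\chi$ and (iii) applies verbatim, then restrict to $m=k_n$ and evaluate at $x=x_n$.
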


\section{Wiener--Wintner points}\label{sec-Strongly}
In this section we introduce and study the main objects of interest for the remaining part of the article viz Wiener--Wintner points. We show that the Fourier--Bohr coefficients exist for such points and provide various characterizations for them.

\medskip

Consider a topological dynamical system $(X,G)$. Let a F\o lner sequence $\mathcal{A}$ be given.  We call an $x\in X$ a \textit{Fourier--Bohr point} if
\[
a_\chi^\cA(g_x) = \lim_n \frac{1}{|A_n|} \int_{A_n} g(t.x) \cdot \overline{\chi(t)} \dd t
\]
exists for all $g$ in $C(X)$ and all $\chi \in \widehat{G}$.

Given any $G$-invariant measure $\mm$, by Lemma~\ref{lem-eigen-exi}, any Fourier--Bohr point $x$ comes with a unique  system of functions $\{ f_\chi^x , \chi \in \widehat{G} \}$
satisfying
\[
a_\chi^{\cA}(g_x) = \langle g, f_\chi^x\rangle
\]
for all $g\in C(X)$.

Moreover, the non-vanishing $f_\chi^x, \chi \in \widehat{G}$,  form an orthogonal system of eigenfunctions.  In general these functions will not be normalized and not form a basis of the space of eigenfunctions as shown in the next example.

\smallskip

\begin{example} Consider $\{0,1\}$ with discrete topology. Then,  $Y =\{0,1\}^\ZZ$, equipped with product topology is a compact space.

$G =\ZZ$ acts continuously on $Y$ via
$$(\alpha(n,y)) (k) = y(k-n)$$ for $n\in\ZZ$ and $y\in Y$. Let
\[
A_n:=[-n,n] \,.
\]
Define
\[
u(n) :=
\left\{
\begin{array}{cc}
  0 & \mbox{ if } n < 0 \\
  1 & \mbox{ if } n \geq 0 \,.
\end{array}
\right.
\]
Then, $u \in Y$.

Define
\[
O(u):=  \{\alpha(n,u) : n \in \ZZ \}  \,,
\]
and $X:= \overline{O(u)}$. Then $(X, \ZZ)$ is a topological dynamical system. It is easy to see that
\[
X= \{\alpha(n,u) : n \in \ZZ \} \cup \{ \underline{0}, \underline{1} \}
\]
where $\underline{1}(n) =1$ for all $n\in \ZZ$ and the point  $\underline{0}$ with $\underline{0}(n) =0$ for all $n\in\ZZ$.   Both $\underline{1}$ and $\underline{0}$ are invariant under the action.

Then, the measure
$$\mm :=\frac{1}{2} \delta_{\underline{1}} + \frac{1}{2} \delta_{\underline{0}} \,,$$
is a $\ZZ$-invariant probability measure, where $\delta_a$ denotes the Dirac measure supported at $a$.

A short computation shows that every  $x\in O(u)$ is generic for $\mm$ and that the Fourier--Bohr coefficients exist for each $x\in X$. Moreover, the functions $f_\chi^x$ can be calculated explicitly and satisfy
\[
f_\chi^x=
\left\{
\begin{array}{cc}
 0 & \mbox{ if } \chi \neq 1 \\
 \frac{1}{2} 1_{\underline{0}} & \mbox{ if } \chi =1 \mbox{ and } x=  \underline{0} \\
  \frac{1}{2} 1_{\underline{1}} & \mbox{ if } \chi =1 \mbox{ and } x=  \underline{1} \\
    \frac{1}{2}(1_{\underline{1}} + 1_{\underline{0}}) & \mbox{ if } \chi =1 \mbox{ and } x \in O(u) \\
\end{array}
\right. \,.
\]
Therefore, $f_\chi^x$ are not normalized. Moreover, for each $x \in X$, the functions $f_{\chi}^x$ do not form a basis for the two dimensional space $L^2 (X,\mm)$.
\end{example}
\smallskip

The example suggests to single out those Fourier--Bohr points for which the non-zero $f_\chi^x , \chi \in \widehat{G}$  do form an orthonormal basis of the space of eigenfunctions. This leads to the following definition.

\begin{definition}[Wiener-Winter points] Let $(X,G)$ be a topological dynamical system, $\mm$ a $G$-invariant probability measure on $X$ and $\mathcal{A}$ a F\o lner sequence.

A point $x\in X$ is called a {\rm Wiener--Wintner point \ of $(X,G,\mm)$ with respect to $\mathcal{A}$} if
 there exists a set $B$ in $\mathcal{L}^2 (X, \mm)$ consisting of eigenfunctions with the following two properties:
 \begin{itemize}
   \item{}  For each $\chi \in \SSS$ the set $\{ e \in B : e \mbox{ is an eigenfunction for } \chi \}$ is an orthonormal basis in the eigenspace for $\chi$.
   \item{} For all $f \in C(X)$  and $e\in B$ with corresponding eigenvalue $\chi_e$ the following limit exists and
   \[
   \lim_n \frac{1}{|A_n|} \int_{A_n} f(t.x) \cdot \overline{\chi_e(t)} \dd t=\langle f, e\rangle \,.
   \]
 \end{itemize}
In this case  $B$ is called the {\rm basis associated to $x$}.
\end{definition}

We note that the Wiener-Wintner points are indeed exactly the Fourier-Bohr points  $x\in X$
 for which the non-zero $f_\chi^x , \chi \in \widehat{G}$, form an orthonormal basis of the space of eigenfunctions.
At this point it is not clear that Wiener--Wintner points exists at all. However, we will show in the subsequent considerations that existence of a Wiener--Wintner point implies ergodicity of the system, which in turn implies that almost every point is a Wiener--Wintner point. So, if such points exist at all there is an ample supply of them.

We start by  showing  that existence of Wiener--Wintner points implies ergodicity.

\begin{prop} Let $(X, G, \mm)$ be a dynamical system and $\cA$ a F\o lner sequence. If there exists an  Wiener--Wintner point  $x\in X$, then $\mm$ is ergodic.
\end{prop}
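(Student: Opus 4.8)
The plan is to show that if $x$ is a Wiener--Wintner point then every eigenspace is one-dimensional, which by a standard argument forces ergodicity. Let $B$ be the basis associated to $x$, and suppose for contradiction that $\mm$ is not ergodic. Since the eigenspace to $\chi=1$ always contains the constants, non-ergodicity produces a non-constant invariant function $h\in L^2(X,\mm)$; adjoining $h$ to the constants shows the eigenspace to $\chi=1$ has dimension at least two. Hence $B$ contains at least two orthonormal eigenfunctions $e_1,e_2$ to the eigenvalue $1$, with $e_1=1_X$ (or a unimodular multiple thereof) available as one choice, and $e_2$ a non-constant invariant function with $\|e_2\|_2=1$ and $\langle e_2,1_X\rangle=0$, i.e. $\int_X e_2\,\dd\mm=0$.

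The key step is to test the Wiener--Wintner condition against $e_2$. Approximate $e_2$ in $L^2(X,\mm)$ by a continuous function: pick $f\in C(X)$ with $\|f-e_2\|_2$ small, so that $\langle f,e_2\rangle$ is close to $\|e_2\|_2^2=1$ and in particular $\langle f,e_2\rangle\neq 0$. By the defining property of a Wiener--Wintner point applied to this $f$ and the eigenfunction $e_2$ (whose eigenvalue is $\chi_{e_2}=1$), the average
\[
\frac{1}{|A_n|}\int_{A_n} f(t.x)\,\dd t
\]
converges to $\langle f,e_2\rangle\neq 0$. On the other hand, $e_1=1_X\in B$ is also an eigenfunction to the eigenvalue $1$, and applying the Wiener--Wintner condition with the \emph{same} $f$ against $e_1$ gives that the same average converges to $\langle f,1_X\rangle=\int_X f\,\dd\mm$. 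This is the crux: the limit of the Cesàro averages $\frac{1}{|A_n|}\int_{A_n} f(t.x)\,\dd t$ is unique, so we would need $\langle f,e_2\rangle=\langle f,1_X\rangle$ for \emph{every} $f\in C(X)$, i.e. $e_2=1_X$ in $L^2(X,\mm)$ by density of $C(X)$, contradicting $\langle e_2,1_X\rangle=0$ together with $\|e_2\|_2=1$.

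The main obstacle is making sure the two eigenfunctions to the eigenvalue $1$ in $B$ really do produce conflicting limits for the \emph{same} test function; this is handled by choosing a single $f\in C(X)$ close to $e_2$ in $L^2$, which simultaneously forces $\langle f,e_2\rangle\neq 0$ and, via the Wiener--Wintner property against $1_X\in B$, forces the average to converge to $\langle f,1_X\rangle$. (One should note that $1_X$ may be replaced by any unimodular scalar multiple if $B$ is only required to be orthonormal; in that case the argument shows $e_2$ is a scalar multiple of $1_X$, still contradicting orthogonality.) A minor point to address is the measurability/integrability of $f_x$, but since $f\in C(X)$ and $X$ is compact this is automatic, and all averages in question are well defined.
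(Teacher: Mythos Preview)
Your proposal is correct and follows essentially the same approach as the paper: the uniqueness of the limit $\lim_n \frac{1}{|A_n|}\int_{A_n} f(t.x)\,\dd t$ forces any two elements of $B$ with eigenvalue $1$ to satisfy $\langle f,e_1\rangle=\langle f,e_2\rangle$ for all $f\in C(X)$, hence $e_1=e_2$ by density. The paper's version is simply the stripped-down form of your argument: it takes two arbitrary distinct $g,h\in B$ with eigenvalue $1$ and derives the contradiction directly, without selecting a particular $f$ near $e_2$ and without identifying one basis element with $1_X$; note that the latter identification is actually not justified (the basis $B$ is fixed by hypothesis and need not contain any scalar multiple of $1_X$), but your argument does not need it, as you effectively observe when you pass to ``for \emph{every} $f\in C(X)$''.
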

\begin{proof}
Assume by contradiction that $\mm$ is not ergodic. Then, there exist distinct $g,h \in B$ corresponding to the eigenvalue $1$. Then,  for all $f \in C(X)$ we get
$$
  \langle f, g \rangle = \lim_n \frac{1}{|A_n|} \int_{A_n} f(t.x) \dd t  = \langle f, h\rangle \,.$$
Since $C(X)$ is dense in $L^2 (X,\mm)$ this gives the contradiction $g = h$. 	
\end{proof}

By the preceding proposition we can (and will) from now on  write the basis $B$ appearing in the definition of generic points in the form
$$\{ e_\chi : \chi \in\SSS\},$$ where $e_\chi$ is the unique eigenfunction in  $B$  to $\chi$.  Next, we show that any Wiener--Wintner point is generic.

\begin{lemma}[Wiener--Wintner points are generic]\label{se-implies-gen} Let $(X, G)$ be a topological dynamical system, $\mm$ a $G$-invariant measure on $X$ and $\cA$ a F\o lner sequence. Then, every Wiener--Wintner point of $(X,G,\mm)$ with respect to $\cA$ is generic.
\end{lemma}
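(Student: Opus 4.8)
The plan is to read the Wiener--Wintner condition at the single eigenvalue $\chi=1$: there it degenerates into the defining condition for genericity, up to one unimodular scalar, and that scalar is then pinned down by testing against the constant function.

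First I would invoke the previous proposition to conclude that $\mm$ is ergodic. Since $1\in\SSS$ always (the constant functions are eigenfunctions to the eigenvalue $1$) and ergodicity forces the eigenspace to $1$ to be one-dimensional and equal to $\CC\cdot 1$, the basis $B$ associated to $x$ contains exactly one eigenfunction $e_1$ to the eigenvalue $1$; being a unit vector in $\CC\cdot 1$, it has the form $e_1=c$ for some $c\in\TT$.

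Next I would apply the second property in the definition of a Wiener--Wintner point to $e=e_1$. As $\chi_{e_1}=1$, so that $\overline{\chi_{e_1}(t)}=1$, this gives
\[
\lim_n \frac{1}{|A_n|}\int_{A_n} f(t.x)\,\dd t \;=\; \langle f, e_1\rangle \;=\; \overline{c}\int_X f(y)\,\dd\mm(y)
\]
for every $f\in C(X)$. Testing this identity with $f=1$ yields $1=\overline{c}$, hence $c=1$ and $e_1=1$. Therefore $\langle f,e_1\rangle=\int_X f\,\dd\mm$ for all $f\in C(X)$, which is exactly condition \eqref{eq-birk-et}; that is, $x$ is generic for $(\mm,\cA)$.

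I do not expect any genuine obstacle here. The only point needing a moment's care is the identification of $e_1$ with the constant function $1$ rather than with some other element of $\TT$; this is precisely what the normalization of $e_1$ together with the evaluation against $f=1$ delivers.
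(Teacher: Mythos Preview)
Your proof is correct and follows essentially the same route as the paper's: both use the previous proposition to get ergodicity, identify the basis element $e_1$ at the eigenvalue $1$ as a constant, and then test the Wiener--Wintner identity against $f=1$ to pin down that constant as $1$, whence genericity. Your version is slightly more explicit about why $|c|=1$ (normalization of $e_1$) and about the conjugate $\bar c$ appearing in $\langle f,e_1\rangle$, but the argument is the same.
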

\begin{proof} Let  $x$ be a Wiener--Wintner point and let
 $e_1$ be the eigenfunction to the eigenvalue $1$ in the associated basis.  As existence of Wiener--Wintner points implies that the system is ergodic the function $e_1$ is almost surely constant with value, say, $c\neq 0$.  Since $x$ is Wiener--Wintner point we then get
\[
c = \langle 1, e_1 \rangle =  \lim_n \frac{1}{|A_n|} \int_{A_n} 1 \dd t =1 \,.
\]
So, we infer $e_1 =1$ almost everywhere.
The definition of Wiener--Wintner point then gives
\[
\int_{X} f(y) \dd \mm(y)= \langle f, e_1 \rangle  = \lim_n \frac{1}{|A_n|} \int_{A_n} f(t.x) \dd t
\]
for all $f \in C(X)$, and hence $x$ is generic.
\end{proof}

Since generic elements uniquely identify the measure for which they are generic we get the following.

\begin{corollary} Let $(X,G)$ be a topological dynamical system and $\cA$ a F\o lner sequence.  Then, any $x\in X$ can be a  Wiener--Wintner point for at most one invariant measure $\mm$.\qed
\end{corollary}

By combining Lemma~\ref{se-implies-gen} with Corollary \ref{cor-van} we can now infer that the Fourier--Bohr coefficients exist for Wiener--Wintner points.

\begin{theorem}[Existence of Fourier--Bohr coefficients for Wiener--Wintner points] Let $(X, G)$ be a topological dynamical system, $\mm$ a $G$-invariant measure on $X$ and $\cA$ a F\o lner sequence. Assume that the element $x \in X$ is Wiener--Wintner point for $(\mm, \cA)$ and  $B=\{ e_\chi : \chi \in \SSS\}$ be the associated  basis of eigenfunctions.
Then, for all $f \in C(X)$ and all $\chi \in \widehat{G}$ the Fourier--Bohr coefficient $a_{\chi}^\cA(f_x)$ exists and satisfies
\[
a_{\chi}^\cA(f_x) =
\left\{
\begin{array}{lc}
\langle f, e_\chi \rangle & \mbox{  if  } \chi \in \SSS \\
0 & \mbox{ if } \chi \notin \SSS
\end{array}
\right. \, .
\]
\qed
\end{theorem}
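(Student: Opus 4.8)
The plan is to treat the two cases $\chi\in\SSS$ and $\chi\notin\SSS$ separately; they are essentially independent, and both are quick consequences of results already at hand, the key input being that a Wiener--Wintner point is generic (Lemma~\ref{se-implies-gen}).

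For $\chi\in\SSS$ there is nothing to do beyond unravelling the definition. Writing the associated basis as $B=\{e_\chi:\chi\in\SSS\}$ with $e_\chi$ the unique eigenfunction in $B$ to $\chi$, the element $e:=e_\chi\in B$ has associated eigenvalue $\chi_e=\chi$, so the second clause in the definition of a Wiener--Wintner point, specialised to this $e$, asserts precisely that $\lim_n\frac{1}{|A_n|}\int_{A_n}f(t.x)\,\overline{\chi(t)}\,\dd t$ exists and equals $\langle f,e_\chi\rangle$ for every $f\in C(X)$; that is, $a_\chi^\cA(f_x)=\langle f,e_\chi\rangle$.

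For $\chi\notin\SSS$ I would argue via genericity. By Lemma~\ref{se-implies-gen}, $x$ is generic for $\mm$, i.e.\ $(C(X),G)$-generic with respect to $\cA$ (recall that for bounded functions the second requirement in the definition of genericity is automatic, and that $G$ is dense in itself). Now $C(X)$ is a $G$-invariant subspace of $\mathcal{L}^2(X,\mm)\cap\mathcal{L}^\infty(X)$ with a countable core (Proposition~\ref{prop4.1}), so Corollary~\ref{cor-van} applies with $E=C(X)$ and $G'=G$ and yields $\lim_n\frac{1}{|A_n|}\int_{A_n}f(t.x)\,\overline{\chi(t)}\,\dd t=0$ for all $f\in C(X)$. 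In particular the Fourier--Bohr coefficient $a_\chi^\cA(f_x)$ exists and vanishes, as claimed. This is the one spot where metrizability of $X$ is used, namely to secure the countable core; internally, Corollary~\ref{cor-van} is just Lemma~\ref{lem-eigen-exi}, since a nonzero value of the limit would produce, via that lemma, a nonzero element of $\overline{C(X)}^{L^2}=L^2(X,\mm)$ that is an eigenfunction to $\chi$, contradicting $\chi\notin\SSS$.

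I do not expect any real obstacle: the statement is a bookkeeping corollary of Lemma~\ref{se-implies-gen} and the vanishing machinery of Section~\ref{sect:main}. The only point meriting attention is that for $\chi\notin\SSS$ one must obtain honest convergence of the averages to $0$ (rather than a value conditional on convergence), and this is exactly what Corollary~\ref{cor-van}, resp.\ Lemma~\ref{lem-eigen-exi}, delivers.
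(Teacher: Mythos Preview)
Your proposal is correct and follows exactly the route the paper indicates: the sentence immediately preceding the theorem says it is obtained ``by combining Lemma~\ref{se-implies-gen} with Corollary~\ref{cor-van}'', and that is precisely what you do---the case $\chi\in\SSS$ from the definition, the case $\chi\notin\SSS$ from genericity plus the vanishing corollary. Your remark that invoking Corollary~\ref{cor-van} via Proposition~\ref{prop4.1} tacitly uses metrizability of $X$ (an assumption not listed in the theorem) is a fair observation; the paper glosses over this, and one could in fact avoid it by applying Lemma~\ref{lem-eigen-exi} directly to the $G$-invariant span of a single $f$ along a suitable subsequence, but your argument matches the paper's stated approach.
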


Having discussed various properties of Wiener--Wintner points we now turn to proving their existence.

\begin{prop}[Existence of Wiener--Wintner points]\label{existence-strong-gen} Let $(X, G, \mm)$ be an ergodic dynamical system and let $\cA$ be a F\o lner  sequence. Assume that $X$ is metrizable. Then, the following assertions hold:

\begin{itemize}
  \item[(a)]If  the pointwise ergodic theorem holds along $\cA$ then the set of Wiener--Wintner points has full measure in $X$.
  \item[(b)] $\cA$ has a subsequence $\cA'$ such that the set of Wiener--Wintner points  with respect to $\cA'$ has full measure.
\end{itemize}
\end{prop}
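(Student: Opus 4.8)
The plan is to prove part (b) as a consequence of part (a): by Theorem~\ref{thm-point-erg}, any F\o lner sequence $\cA$ has a tempered subsequence $\cA'$ along which the pointwise ergodic theorem holds, and then (a) applied to $\cA'$ gives the claim. So the substance is entirely in part (a). For (a), assume the pointwise ergodic theorem holds along $\cA=\{A_n\}$. Since $X$ is metrizable, $C(X)$ is separable; fix a countable dense set $D\subseteq C(X)$, which is a countable core for $C(X)$ in the sense of the Core definition (Proposition~\ref{prop4.1}). The strategy is to produce a single full-measure set on which a fixed orthonormal basis of the eigenspace decomposition is "read off" correctly by the Wiener--Wintner averages, and then check that the defining properties of a Wiener--Wintner point hold there.

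First I would set up the basis. By ergodicity each eigenspace is one-dimensional, and since $X$ is metrizable $L^2(X,\mm)$ is separable, so $\SSS$ is countable; for each $\chi\in\SSS$ pick a normalized eigenfunction $e_\chi$, which (by scaling and since eigenfunctions of unitary operators have constant modulus) may be taken in $\cL^\infty(X)$, and normalize so $e_1=1$. The countable family $\{h\,e_\chi : h\in D,\ \chi\in\SSS\}$ lies in $\cL^1(X,\mm)$, so by the pointwise ergodic theorem along $\cA$ there is a full-measure set $X_2\subseteq X$ on which
\[
\lim_n \frac{1}{|A_n|}\int_{A_n} (h\,e_\chi)(t.x)\,\dd t \;=\; \int_X h\,e_\chi\,\dd\mm
\]
for all $h\in D$ and all $\chi\in\SSS$. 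Using $e_\chi(t.x)=\chi(t)e_\chi(x)$ and the fact that $e_\chi(x)\neq 0$ a.e., this rearranges (on the set where additionally $e_\chi(x)\neq0$ for every $\chi\in\SSS$, still full measure) to
\[
\lim_n \frac{1}{|A_n|}\int_{A_n} h(t.x)\,\overline{\chi(t)}\,\dd t \;=\; \frac{1}{e_\chi(x)}\int_X h\,e_\chi\,\dd\mm \;=\; \langle h, \overline{e_\chi(x)}^{-1}e_\chi\rangle.
\]
Replacing $e_\chi$ by the unimodular rescaling $\tilde e_\chi := (e_\chi(x)/|e_\chi(x)|)\, e_\chi$ if necessary — more cleanly, absorb the phase into the choice and note $|e_\chi(x)|=1$ a.e.\ because $|e_\chi|$ is invariant hence constant and $e_\chi$ is normalized — the right-hand side becomes $\langle h, e_\chi\rangle$. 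Then I would pass from $h\in D$ to all $f\in C(X)$ by the uniform-limit (core) argument: the averaging functionals $f\mapsto \frac1{|A_n|}\int_{A_n} f(t.x)\overline{\chi(t)}\dd t$ are uniformly bounded by $\|f\|_\infty$, so convergence on the dense set $D$ forces convergence for all $f\in C(X)$ with limit $\langle f,e_\chi\rangle$. This establishes the second bullet of the Wiener--Wintner definition, with $B=\{e_\chi:\chi\in\SSS\}$, for every $x\in X_2'$, where $X_2'\subseteq X_2$ is the full-measure set on which all the relevant exceptional null sets have been removed.

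It remains to handle $\chi\notin\SSS$, i.e.\ to see that the Fourier--Bohr averages for non-eigenvalues vanish on a full-measure set. Here I would invoke Corollary~\ref{cor-van-fb-d} (or equivalently build $E$ as the $G'$-invariant span of $D$ over a countable dense subgroup $G'\le G$ and apply Proposition~\ref{existence-generic} together with Corollary~\ref{cor-van}): there is a full-measure set $X_1$ such that $a_\chi^{\cA}(f_x)=0$ for all $f\in D$, $x\in X_1$, $\chi\notin\SSS$, and again the core/uniform-boundedness argument upgrades this to all $f\in C(X)$. Intersecting, $X_0:=X_1\cap X_2'$ has full measure, and for $x\in X_0$ the set $B=\{e_\chi:\chi\in\SSS\}$ is by construction an orthonormal basis of the eigenspace for each $\chi$ (one-dimensional, $e_\chi$ normalized) and satisfies the limit identity in the definition; moreover the $\chi\notin\SSS$ averages vanish, so in particular $x$ is a Fourier--Bohr point whose associated system is exactly $B$. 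Hence every $x\in X_0$ is a Wiener--Wintner point, proving (a). The main obstacle I anticipate is the bookkeeping around the phase/normalization of $e_\chi$ at the point $x$: one must be slightly careful that $|e_\chi|=1$ a.e.\ (so that $e_\chi(x)$ is a genuine phase for a.e.\ $x$) and that, after removing countably many null sets, the fixed family $B$ works simultaneously for all $x\in X_0$ and all $\chi\in\SSS$; this is exactly why separability of $L^2(X,\mm)$ (countability of $\SSS$) and metrizability of $X$ (countable core) are needed.
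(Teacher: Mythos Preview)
Your approach is essentially the same as the paper's: pick normalized bounded eigenfunctions, apply the pointwise ergodic theorem to the countable family $\{f\cdot\overline{e_\chi}: f\in D,\ \chi\in\SSS\}$, and then for each $x$ in the resulting full-measure set take the $x$-dependent rescaled basis $e_\chi(x)^{-1}e_\chi$. Two small corrections are in order. First, you should apply the ergodic theorem to $h\,\overline{e_\chi}$ rather than $h\,e_\chi$, so that $\overline{\chi(t)}$ and the inner product $\langle h,e_\chi\rangle=\int h\,\overline{e_\chi}\,\dd\mm$ come out correctly (as written, your displayed rearrangement produces $\chi(t)$, not $\overline{\chi(t)}$). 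Second, the entire paragraph handling $\chi\notin\SSS$ is superfluous: the definition of a Wiener--Wintner point imposes conditions only for $\chi\in\SSS$, and the vanishing of the remaining Fourier--Bohr averages is a \emph{consequence} (proved separately in the paper), not part of what must be verified. Dropping that step also spares you the appeal to Corollary~\ref{cor-van-fb-d}, whose hypothesis that $G$ be second countable is not assumed in this proposition.
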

\begin{proof}
\textbf{(a)} Let $B \subseteq \cL^\infty(X)$ be an orthonormal basis of $L^2 (X,\mm)_{\textsf{pp}}$ consisting of eigenfunctions.

Adjusting the elements of $B$ on sets of measure zero we can assume without loss of generality that
$$e(t.x) = \chi_e (t) \cdot e (x)$$
holds for all $t\in G, e \in B$ and $x\in X$, where $\chi_e$ is the eigenvalue to $e\in B$.

Chose now  a  countable dense set $D \subseteq C(X)$. Then,
\[
E:= \{ f \cdot \overline{e} : f \in D, e\in B \}  \,.
\]
is countable. Moreover, all functions in $E$ are bounded. It follows that the pointwise  ergodic theorem holds simultaneously for all $g \in E$ on a full measure set $Z$. In particular we obtain for all $g\in E$ and $x\in Z$
\begin{eqnarray*}
\overline{e (x)} \lim_n \frac{1}{|A_n}| \int_{A_n} f(t.x) \cdot \overline{\chi_e (t)} \dd t
&= & \lim_n \frac{1}{|A_n}| \int_{A_n} f(t.x) \cdot \overline{e(x)} \cdot \overline{\chi_e (t)} \dd t\\
&=& \lim_n \frac{1}{|A_n}| \int_{A_n} f(t.x) \overline{e (t.x)}  \dd t\\
& = & \int_X f(y) \cdot \overline{ e(y)} \dd \mm (y) = \langle f, e\rangle \,.
\end{eqnarray*}

Moreover, as eigenfunctions have constant modulus and there are only countably many eigenfunctions, we can assume without loss of generality that $e(x)\neq 0$ holds for all $e\in B$ and $x\in Z$.

Hence, the above chain of equalities gives that
each  $x\in Z$ is a Wiener--Wintner point with associated basis given by 
$$e_\chi := \frac{1}{e(x)} e,$$
where $e$ is the unique eigenfunction in $B$ to the eigenvalue $\chi$.

\textbf{(b)} By  Theorem~\ref{thm-point-erg}, $\cA$ has a subsequence $\cA'$ along which the pointwise ergodic theorem holds. The claim follows now from (a).
\end{proof}

We finish this section with giving various characterizations of Wiener--Wintner points.

\begin{prop}\label{sg-char} Let $(X, G)$ be a topological dynamical system, $\mm$ a $G$-invariant probability measure on $X$ and $\cA$ a F\o lner sequence. The following assertions are equivalent for $x \in X$.
\begin{itemize}
  \item[(i)]  $x$ is Wiener--Wintner point for $(\mm, \cA)$ .
  \item[(ii)] $x$ is generic, and for all $f \in C(X)$ and $\chi\in \widehat{G}$ the Fourier--Bohr coefficient $a_\chi^\cA(f_x)$ exists and, for any eigenfunction $g_\chi$ with $\| g_\chi \|_2=1$ we have
  \[
  |a_\chi^\cA(f_x)| = |\langle f, g_\chi \rangle| \ .
  \]

    \item[(iii)] $x$ is generic, the dynamical system is ergodic and there exists a choice of non-trivial eigenfunctions $h_\chi$, $\chi  \in \SSS$,  such that\footnote{Note that if $h_\chi$ is an eigenfunction, then $|h_\chi|$ is constant $\mm$-almost everywhere. Therefore, for $\mm$-almost all $y \in X$ we have $|h_\chi(x)|=\|h_\chi\|_2$. The second condition below requires this equality to hold at the particular point $x$ we fixed.}:
  \begin{itemize}
    \item{} $h_\chi(t.x)=h_\chi(x) \cdot \chi(t)$ holds for all $t \in G$.
    \item{} $|h_\chi(x)|=\|h_\chi\|_2$.
    \item{} for all $f \in C(X)$ and all $\chi$ we have
  \[
  \langle f, h_\chi \rangle = \lim_n \frac{1}{|A_n|} \int_{A_n} f(t.x) \cdot  \overline{h_\chi(t.x)} \dd t \,.
  \]
  \end{itemize}
  \item[(iv)] The dynamical system is ergodic and there exists basis $f_\chi$ of normalized eigenfunctions for $L^2 (X,\mm)_{\mathsf{pp}}$ such that,
   for all $f \in C(X)$ all $N \in \NN$ and all $c_1, \ldots, c_N \in \CC, \chi_1, \ldots , \chi_N$ we have
  \[
\| f- \sum_{j=1}^N c_j f_{\chi_j} \|_2^2 =  \lim_n  \frac{1}{|A_n|} \int_{A_n} \left|f(t.x) - \sum_{j=1}^N c_j \chi_j(t)  \right|^2 \dd t  \,.
  \]
\end{itemize}
\end{prop}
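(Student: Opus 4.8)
The plan is to prove a cycle of implications, say (i)$\Rightarrow$(ii)$\Rightarrow$(iii)$\Rightarrow$(iv)$\Rightarrow$(i), using the machinery already assembled. The easy inputs are: a Wiener--Wintner point is generic (Lemma~\ref{se-implies-gen}); existence of a Wiener--Wintner point forces ergodicity (the Proposition above), so each eigenspace is one-dimensional; and for a generic point the Fourier--Bohr coefficients of non-eigenvalues vanish while those at $\chi\in\SSS$ are given by $\langle f, e_\chi\rangle$ (Theorem on existence of Fourier--Bohr coefficients for Wiener--Wintner points, built on Corollary~\ref{cor-van}). The subtlety throughout is the distinction between an abstract $L^2$-eigenfunction and its genuine pointwise values along the orbit of the fixed point $x$; this is exactly what the footnote to (iii) flags, and it is where most of the bookkeeping lives.

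First I would do (i)$\Rightarrow$(ii): given the associated basis $\{e_\chi:\chi\in\SSS\}$, for $\chi\in\SSS$ we have $a_\chi^\cA(f_x)=\langle f,e_\chi\rangle$ by definition, and since ergodicity makes the eigenspace one-dimensional any normalized eigenfunction $g_\chi$ equals $e_\chi$ up to a unimodular scalar, giving $|a_\chi^\cA(f_x)|=|\langle f,g_\chi\rangle|$; for $\chi\notin\SSS$ both sides are $0$ (the left by Corollary~\ref{cor-van}, the right since there is no eigenfunction). Next, (ii)$\Rightarrow$(iii): genericity and existence of $a_\chi^\cA(f_x)$ mean $x$ is a Fourier--Bohr point, so Lemma~\ref{lem-eigen-exi} supplies, for each $\chi$, a unique $f_\chi^x\in\overline{C(X)}=L^2(X,\mm)$ with $\|f_\chi^x\|_2\le1$, $U_t f_\chi^x=\chi(t)f_\chi^x$, and $a_\chi^\cA(f_x)=\langle f,f_\chi^x\rangle$. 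The modulus condition in (ii) forces $\|f_\chi^x\|_2=1$ whenever $\chi\in\SSS$ (choose $f$ with $\langle f,g_\chi\rangle\neq0$ and compare), so $f_\chi^x$ is a normalized eigenfunction; I then set $h_\chi:=f_\chi^x$, adjust it on a null set so that $h_\chi(t.x)=h_\chi(x)\chi(t)$ holds for all $t$, and argue the normalization $|h_\chi(x)|=\|h_\chi\|_2$ must hold at the particular point $x$ precisely because $x$ is generic: the ergodic average of $|f|^2$ along the orbit of $x$ reproduces $\|f\|_2^2$, and the relation $a_\chi^\cA(f_x)=\overline{h_\chi(x)}\cdot\lim_n\frac1{|A_n|}\int_{A_n}f(t.x)\overline{\chi(t)}\,\dd t$ combined with $|a_\chi^\cA(f_x)|=|\langle f,h_\chi\rangle|$ pins down $|h_\chi(x)|$. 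The last bullet of (iii) is then just the substitution $\overline{h_\chi(t.x)}=\overline{h_\chi(x)}\,\overline{\chi(t)}$ inside the average, using $|h_\chi(x)|=1$ (after rescaling, if one prefers $\|h_\chi\|_2=1$) to match $\langle f,h_\chi\rangle$.

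For (iii)$\Rightarrow$(iv): the family $f_\chi:=h_\chi/\|h_\chi\|_2$ is a normalized eigenbasis of $L^2(X,\mm)_{\mathsf{pp}}$ by ergodicity. Expand the square on the right-hand side of (iv) as
\[
\frac1{|A_n|}\int_{A_n}\Bigl(|f(t.x)|^2 - 2\,\re\sum_j \bar c_j\,\overline{\chi_j(t)}\,f(t.x) + \sum_{j,k} c_j\bar c_k\,\chi_j(t)\overline{\chi_k(t)}\Bigr)\dd t,
\]
and take $n\to\infty$ term by term: the first term converges to $\|f\|_2^2$ since $|f|^2\in C(X)$ and $x$ is generic; the cross terms converge to $\langle f, f_{\chi_j}\rangle$ (up to the unimodular constant absorbing $h_{\chi_j}(x)$, which is harmless after the rescaling built into $f_{\chi_j}$, using the third bullet of (iii)); and the quadratic terms converge to $\delta_{jk}$ because $\chi_j\overline{\chi_k}$ is the character $\chi_j\chi_k^{-1}$, which lies in $\SSS$ only when $j=k$ (distinctness), so its Fourier--Bohr average at the trivial character along the generic point is $1$ if $j=k$ and $0$ otherwise by Corollary~\ref{cor-van}. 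Collecting gives exactly $\|f-\sum_j c_j f_{\chi_j}\|_2^2$. Finally (iv)$\Rightarrow$(i): ergodicity is assumed; taking $N=1$, $c_1=0$ in (iv) recovers genericity on $|f|^2$ hence on $f$ (polarization), and taking general $c_j$ and comparing the $n\to\infty$ limit identifies $\lim_n\frac1{|A_n|}\int_{A_n}f(t.x)\overline{\chi_j(t)}\,\dd t=\langle f,f_{\chi_j}\rangle$ — differentiate the quadratic in $c_j$, or subtract the $c_j=0$ instance — so $B=\{f_\chi:\chi\in\SSS\}$ witnesses that $x$ is a Wiener--Wintner point.

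I expect the main obstacle to be (ii)$\Rightarrow$(iii): marshalling the null-set adjustments so that the eigenfunction relation holds everywhere along $\mathrm{O}(x)$ and, crucially, verifying that the pointwise normalization $|h_\chi(x)|=\|h_\chi\|_2$ is genuinely forced at the single point $x$ rather than merely almost everywhere. This is the one place where mere $L^2$-information is insufficient and one must feed in the genericity of $x$ in an essential, pointwise way; everything else is Cauchy--Schwarz, the Riesz representation theorem, and expanding squares.
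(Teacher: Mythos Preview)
Your cycle (i)$\Rightarrow$(ii)$\Rightarrow$(iii)$\Rightarrow$(iv)$\Rightarrow$(i) differs from the paper's hub-and-spoke pattern, which routes everything through (i): it proves (i)$\Leftrightarrow$(ii), (i)$\Leftrightarrow$(iii), (i)$\Leftrightarrow$(iv) separately. Your (i)$\Rightarrow$(ii) and (iv)$\Rightarrow$(i) match the paper's arguments, and your (iii)$\Rightarrow$(iv) is essentially the paper's (iii)$\Rightarrow$(i) composed with (i)$\Rightarrow$(iv).

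There are two genuine gaps in your (ii)$\Rightarrow$(iii). First, you never derive ergodicity, which (iii) explicitly assumes. The paper extracts it inside its (ii)$\Rightarrow$(i): if $g,h$ were orthonormal eigenfunctions to the same $\chi$, then (ii) forces $|\langle f, ag+bh\rangle|$ to be independent of $(a,b)$ on $|a|^2+|b|^2=1$; setting $z=\langle f,g\rangle$, $w=\langle f,h\rangle$ gives $|z|=|w|=|az+bw|$ for all such $a,b$, whence $z=w$ for every $f\in C(X)$, so $g=h$. Without this step you have neither ergodicity nor one-dimensional eigenspaces. Second, ``adjust on a null set so that $h_\chi(t.x)=h_\chi(x)\chi(t)$'' only makes sense when $\mm(O(x))=0$, and even then you must first check the assignment $t.x\mapsto\chi(t)$ is well-defined (i.e.\ $t.x=s.x$ forces $\chi(t)=\chi(s)$). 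The paper does a case split: for $\mm(O(x))=0$ it gets well-definedness from $a_\chi^\cA(f_x)=\overline{\chi(s-t)}\,a_\chi^\cA(f_x)$ together with $a_\chi^\cA(f_x)\neq0$ for some $f$; for $\mm(O(x))=1$ it shows $O(x)$ is a compact group with $\mm$ its Haar measure and builds $h_\chi$ as a character on it. Your final paragraph correctly flags this as the crux, but the resolution is a \emph{construction} of $h_\chi$ on the orbit, not a derivation: once you set $h_\chi(x)=1$ by fiat, $|h_\chi(x)|=\|h_\chi\|_2$ is immediate and there is nothing to ``pin down'' via genericity.

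A minor point on (iii)$\Rightarrow$(iv): the right normalization is $f_\chi:=h_\chi/h_\chi(x)$ (which is indeed unit-norm since $|h_\chi(x)|=\|h_\chi\|_2$), not $h_\chi/\|h_\chi\|_2$. The phase of $h_\chi(x)$ is exactly what must be divided out for the cross terms $\langle f,f_{\chi_j}\rangle$ and $a_{\chi_j}^\cA(f_x)$ to agree; dividing only by the modulus leaves a unimodular discrepancy that is not ``harmless'' for the identity in (iv).
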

\begin{remark} The condition (iv) is equivalent to
\[
\| f- \sum_{j=1}^N c_j f_{\chi_j} \|_2= \|f_x - \sum_{j=1}^N c_j \chi_j  \|_2
\]
and the fact that the semi-norm on the RHS is actually the limit (and not just limsup).
\end{remark}
\begin{proof}
\noindent (i) $\Longrightarrow$ (ii) Since $x$ is generic, the existence of Fourier--Bohr coefficients for $\chi \notin \SSS$ follows from Corollary~\ref{cor-van}.

Let $\chi \in \SSS$ and let $e_\chi \in B$ be the eigenfunction for $\chi$. Let $f \in C(X)$. Then, by definition of Wiener--Wintner point, the Fourier--Bohr coefficient $a^\cA_{\chi}(f_x)$ exists and
\[
a^\cA_\chi(f_x)=\langle f, e_\chi\rangle \,.
\]
Let $g_\chi$ be any eigenfunction for $\chi$ be so that $\| g_\chi \|_2=1$. Since the eigenspace of $\chi$ is one dimensional by ergodicity, $g_\chi=c e_\chi$ holds for a suitable $c\neq 0$. Since both $g_\chi, e_\chi$ are normal, $|c|=1$.

Therefore,
\[
|\langle f, g_\chi \rangle|=|c| \cdot \langle f, e_\chi \rangle= |a^\cA_\chi(f_x)| \,.
\]

\smallskip

\noindent (ii) $\Longrightarrow$ (i)
Consider $\chi \in\widehat{G}$.  By Lemma \ref{lem-eigen-exi} we can restrict to the case that $\chi $ is an eigenvalue.

Pick some normalized eigenfunction $g_\chi$ and some $f \in C(X)$ so that $\langle f, g_\chi \rangle \neq 0$.

By Lemma~\ref{lem-eigen-exi} applied to $f$ and the core $E=C(X)$, there exists an eigenfunction $h_\chi$ so that
\[
a_\chi(g_x)= \langle g, f_\chi \rangle \qquad \mbox{ for all } g \in C(X) \,.
\]

%Define $F_\chi : C(X) \to \CC$ via
%\[
%F_\chi(f):=a_\chi^{\cA} (f_x) \,.
%\]
%Then, by (ii) we have
%\[
%\left| F_\chi(f) \right|^2= |\langle f, g_\chi\rangle|^2 \leq \| f\|_2^2
%\]
%Therefore, $F_\chi$ is a bounded operator on $C(X) \subseteq L^2(m)$. Since $C(X)$ is a dense subspace of $L^2(m)$, $F_\chi$ can be extended to a continuous functional on the Hilbert space $L^2(m)$. By Riesz' lemma, there exists some element %$f_\chi \in L^2(m)$ with $\| f_\chi \| \leq 1$ such that, for all
%$f \in L^2(m)$, we have
%\[
%a_\chi^{\cA} (f_x)=F_\chi(f) = \int_{X} f(y)\, \overline{ f_\chi(y)}\, \dd m(y) \,.
%\]
%By denseness of $C(X)$ in $L^2(m)$ we can find some $f \in C(X)$ with $\langle f,g_\chi\rangle\neq 0$ and from (ii) we find
%\[
%0 \neq |\langle f,g_\chi\rangle|^2 = |a_\chi^{\cA} (f_x)|^2=\left| \int_{X} f(\omega)\, \overline{ f_\chi(\omega)}\, \dd m(\omega) \right|^2 \ .
%\]
%This implies that $f_\chi$ is a non-trivial function. Moreover, for each $f \in C(X)$ a short computation gives
%$$
%\langle f, U_t f_\chi \rangle=\langle U_{-t} f, f_\chi \rangle = a^\cA_{\chi}((U_{-t} f)_x)=\overline{\chi(t)} a^\cA_{\chi}(f_x)= \overline{\chi(t)} \langle f,  f_\chi \rangle = \langle f, \chi(t)  f_\chi \rangle$$
%for all $f\in C(X)$. As $C(X)$ is dense in $L^2 (X,m)$  we get that $f_\chi$ is indeed an eigenfunction.

To finish this direction we show that the $(f_\chi)$ form a basis of $L^2 (X,\mm)_{\mathsf{pp}}$.

To do so it suffices to show that each eigenspace is one-dimensional. Assume that contrary and let $g,h$ be two orthogonal normalized  eigenfunctions to the same eigenvalue, say, $\chi$.

Then,
\[
|a_{\chi}^{\cA} (f_x)| = |\langle f, a g + b h\rangle| = |a \langle f,g\rangle + b \langle f,h\rangle|
\]
holds for all $f\in C(X)$ and all $a,b\in\CC$ with $|a|^2 + |b|^2 =1$.

Denoting by $z=  \langle f,g\rangle, w := \langle f,h\rangle$ we get that these two complex numbers satisfy
\[
|z|=|w|=|az+bw|
\]
for all $a,b \in \CC$ with $|a|^2+|b|^2=1$. It is straightforward to deduce that $z=w$.

This gives that
\[
\langle f,g\rangle = \langle f,h\rangle \,.
\]
Since this holds for all $f \in C(X)$, we get that $g=h$ which contradicts their orthonormality.

%This, however, is a contradiction as we could e.g. chose $f$ very close to $g$ and such force $\langle f, h\rangle$ to become arbitrarily small.

\smallskip

\noindent (i) $\Longrightarrow$ (iii) We first note  that the orbit $O_x=\{ t.x :t \in G\}$ is $G$-invariant. Therefore, by ergodicity $\mm(O_x) \in \{ 0,1 \}$. We split the problem into two cases:

{\it Case 1:} $\mm(O_x)=0$.

Then we can define
\[
h_\chi(y):=
\left\{
\begin{array}{cc}
  \chi(t) & \mbox{ if } y =t.x \in O_{x} \\
  f_\chi(y) &\mbox{ otherwise }
\end{array}
\right. \ .
\]
We show that this is well defined, and then it is immediate that $h_\chi$ is an eigenfunction satisfying the desired condition.
Pick one $f \in C(X)$ is such that $\langle f, f_\chi \rangle \neq 0$.
Assume that some $y \in O_{x}$ has two representations
\[
y=s.x = r.x \ .
\]
Then, the function $f_x$ is $s-r$ periodic. This gives
\[
\langle f, f_\chi \rangle = a_{\chi}^\cA(f_x)= a_{\chi}^\cA( ((U_{s-r} f)_x)= \overline{\chi(s-r)} \cdot a_{\chi}^\cA(f_x)= \overline{\chi(s-r)} \cdot a_{\chi}^\cA(f_x) \cdot \langle f, f_\chi \rangle \ .
\]
As $\langle f, f_\chi \rangle \neq 0$ we get $\chi(s)=\chi(r)$, completing the proof in this case.

\underline{Case 2:} $\mm(O_{x})=1$.

On $O_{x}$ the operation
\[
(t.x) \oplus (s.x) := (s+t).x
\]
defines a group structure ( and it is immediate that $(O_{x}, \oplus)$ is isomorphic as a topological group to $G/\Per(x)$).

It follows immediately that any $G$-invariant measure on this group is the Haar measure. Therefore, the ergodic measure is the unique probability Haar measure on $O_x$ and $O_x$ is a compact group.

Now, for each $f \in C(X)$ the function $f_x$ is $\Per(x)$-periodic and hence the Fourier--Bohr spectrum $\{ \chi : a_\chi(f_x) \neq 0 \}$ of $f_x$ is contained in
\[
(\Per(x))^0 \subseteq \widehat{G} \ .
\]
It follows immediately from here that for each eigenvalue $\chi$ the function we can define an eigenfunction
\[
h_\chi(y):=
\left\{
\begin{array}{cc}
  \chi(t) & \mbox{ if } y =\tau_t x \in O_{x} \\
  0 &\mbox{ otherwise }
\end{array}
\right. \ .
\]
continuous on the set $O_x$ of measure $0$.

Since $(O_x, G, \mm)$ is uniquely ergodic, and $h_\chi$ is continuous on $O_x$, we can apply the unique ergodic theorem on $O_x$ to get that for all $f \in C(X)$ and all eigenvalues $\chi$ we have
\[
\langle f, h_\chi \rangle =a_\chi^\cA(f_x) = \langle f, f_\chi \rangle \ .
\]
This implies that $\| f_\chi-h_\chi \|_2=0$ and hence $\| h_\chi \|_2=\|f_\chi\|_2=1$ and $h_\chi$ are indeed normalized.

\medskip
\noindent (iii) $\Longrightarrow$ (i) Let $h_\chi$, $\chi \in \SSS$,  be any such choice. Pick one $f \in C(X)$ such that
\[
\langle f, h_\chi \rangle \neq 0 \ .
\]
This implies that
\[
0\neq \lim_n \frac{1}{|A_n|} \int_{A_n} f(t.x) \cdot \overline{h_\chi(t.x)} \dd t
\]
and hence $h_\chi(x) \neq 0$.
Defining
\[
f_\chi:= \frac{1}{h_\chi(x)} h_\chi \ .
\]
we get a basis of normalized eigenfunctions associated to $x$ and $x$ is Wiener--Wintner point.

\noindent (i) $\Longrightarrow$ (iv) is immediate. Indeed
\begin{align*}
\| f- \sum_{j=1}^N c_j f_{\chi_j} \|^2_2 &= \int_{X} |f(y)|^2 \dd \mm(y)- \sum_{j=1}^N c_j \int_{X} \overline{ f(y)} f_{\chi_j}(y) \dd \mm(y)\\
&- \sum_{j=1}^N \bar{c_j} \overline{ \int_{X} \overline{ f(y)} f_{\chi_j}(y) \dd \mm(y)} + \sum_{j=1}^N |c_j|^2 \ .
\end{align*}
Since $f\in C(X)$ we can apply genericity for $x$ and $|f|^2$ and the definition of $x$ being Wiener--Wintner point for $\bar{f}$ to get
\begin{align*}
\| f- \sum_{j=1}^N c_j f_{\chi_j} \|^2_2&= \lim_n \frac{1}{|A_n|} \int_{A_n} \left(|f_x(t)|^2 - \sum_{j=1}^N c_j \overline{f_x(t)} \chi_j(t) \right. \\
&\left. -\sum_{j=1}^N \bar{c_j} f_x(t) \overline{\chi_j(t)} + \sum_{j=1}^N |c_j|^2 \right) \dd t\\
&= \lim_n  \frac{1}{|A_n|} \int_{A_n} \left|f_x(t) - \sum_{j=1}^N c_j \chi_j(t)  \right|^2 \dd t \ .
\end{align*}

\medskip (iv) $\Longrightarrow$ (i)
Note first that for each $f \in C(X), c \in \CC$ and $\chi$ we have
\begin{align*}
&\int_{X} |f(y)|^2   -   \overline{c} f(y) \overline{f_\chi(y)} - c \overline{f(y)} f_\chi(y) +|c|^2 \dd \mm(y) =\| f-  c f_{\chi} \|^2_2\\
&= \lim_n  \frac{1}{|A_n|} \int_{A_n} |f(t.x)|^2 -\overline{c} f(t.x)\overline{\chi(t)} - c \overline{f(t.x)} \chi(t) + |c|^2 \dd t \,.
\end{align*}
Applying this relation to $c=\pm 1, \pm i$ and taking a linear combination, via polarization we infer that $x$ is Wiener--Wintner point.
\end{proof}

\section{Relationship to  Besicovitch almost periodicity}
\label{sec-Besicovitch}

The aim of this section is to relate Wiener--Wintner points to Besicovitch almost periodicity.

\medskip

We start by briefly recalling relevant definitions concerning Besicovitch almost periodicity. A trigonometric polynomial (on $G$) is a linear combination of elements of $\widehat{G}$. The set of all trigonometric polynomials on $G$ is denoted by $TP(G)$.

Let $\cA$ be a F\o lner sequence. Then, we define the associated \textit{Besicovitch seminorm}  on the bounded measurable functions $h: G\longrightarrow \CC$ by
$$\|h\|_{b} :=\limsup_n \sqrt{\frac{1}{|A_n|} \int_{A_n} |h(t)|^2 \dd t} \,.$$
We note  that this semi-norm is  denoted by $\| \, \|_{b,2,\cA}$ in the systematic studies in \cite{ LSS,LSS2}, but for our purposes here the simpler notation suffices.

A bounded measurable  $f : G\longrightarrow \CC$ is called \textit{Besicovitch almost periodic} (w.r.t. $\cA$)  if to any $\varepsilon >0$ there exists a $p\in TP(G)$  with
$$\|f - p\|_{b} \leq \varepsilon \,.$$
The set of all Besicovitch almost periodic functions with respect to $\cA$ is denoted by $\bap^2_{\cA}(G)$.

If a dynamical system $(X,G)$ and a F\o lner sequence $\cA$  is given  then an $x\in X$ is called \textit{Besicovitch almost periodic} (w.r.t. $\cA$)   if $f_x$ is Besicovitch almost periodic for all $f\in C(X)$ (compare \cite{LSS2} for a study of this concept and further details).  We note that this definition does not involve a measure. However, as discussed in \cite{LSS2} any Besicovitch almost periodic point induces a unique ergodic measure $\mm$ on $X$ for which it is generic and, moreover, the dynamical system $(X,G,\mm)$ has pure point spectrum.  Here is the main result of this section (compare \cite{LSS2}).

\begin{theorem}[Wiener--Wintner points as Besicovitch almost periodic points]\label{sg-vs-bes} Let $(X,G)$ be a topological dynamical system and $\cA$ a F\o lner sequence. Let $x \in X$ be arbitrary. Then, the following assertions are equivalent for $x\in X$:

\begin{itemize}
\item[(i)]  $x$ is a Besicovitch almost periodic point (along $\cA$).
\item[(ii)] There exists an ergodic measure $\mm$ with pure point spectrum such that $x$ is Wiener--Wintner point of $(X,G,\mm)$ with respect to $\cA$.
\end{itemize}
\end{theorem}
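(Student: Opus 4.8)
The plan is to prove the two implications separately, using in an essential way the facts recalled just before the theorem: a Besicovitch almost periodic point induces a unique ergodic measure $\mm$ for which it is generic, and $(X,G,\mm)$ then has pure point spectrum (these are from \cite{LSS2}). I would also use the characterization of Wiener--Wintner points in Proposition~\ref{sg-char}, most conveniently version (iv) or (ii), since those are phrased directly in terms of convergence of averages of the coding $f_x$ against characters.

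\textbf{(i) $\Longrightarrow$ (ii).} Assume $x$ is Besicovitch almost periodic along $\cA$. Let $\mm$ be the induced ergodic measure, so that $(X,G,\mm)$ has pure point spectrum and $x$ is generic for $(\mm,\cA)$. The key point is to identify, for each eigenvalue $\chi \in \SSS$, the Fourier--Bohr coefficient $a_\chi^\cA(f_x)$ with an inner product $\langle f, e_\chi\rangle$ for a suitable normalized eigenfunction, \emph{and} to show these $e_\chi$ form an orthonormal basis of $L^2(X,\mm)_{\mathsf{pp}} = L^2(X,\mm)$. First I would observe that for $f \in C(X)$, Besicovitch almost periodicity of $f_x$ means $f_x$ is approximable in $\|\cdot\|_b$ by trigonometric polynomials; writing $p = \sum_j c_j \chi_j$, the averages $a_{\chi_j}^{A_n}(f_x)$ converge (the limit of $a_\chi^{A_n}(p)$ is $c_\chi$ by a direct computation using the Følner property, and the $\|\cdot\|_b$-error is controlled uniformly in $n$ by Cauchy--Schwarz), so every Fourier--Bohr coefficient $a_\chi^\cA(f_x)$ exists. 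Hence $x$ is a Fourier--Bohr point. By Lemma~\ref{lem-eigen-exi} applied with $E = C(X)$, $x$ comes with its system $\{f_\chi^x\}$ of eigenfunctions with $a_\chi^\cA(f_x) = \langle f, f_\chi^x\rangle$. I would then verify the norm identity of Proposition~\ref{sg-char}(iv): for $f \in C(X)$ and any trigonometric polynomial $q = \sum_{j=1}^N c_j \chi_j$, genericity of $x$ for $|f|^2$ together with the existence (just established) of the relevant Fourier--Bohr coefficients gives
\[
\lim_n \frac{1}{|A_n|}\int_{A_n} \Bigl|f(t.x) - \sum_{j=1}^N c_j \chi_j(t)\Bigr|^2 \dd t = \|f_x - q\|_b^2,
\]
and Besicovitch almost periodicity forces $\inf_q \|f_x - q\|_b = 0$; combined with the mean ergodic theorem (which says the $L^2$-distance from $f$ to the span of eigenfunctions $f_{\chi_j}$ equals exactly this kind of quantity), one extracts the precise equality $\|f - \sum c_j f_{\chi_j}\|_2^2 = \lim_n \frac{1}{|A_n|}\int_{A_n}|f_x - \sum c_j\chi_j|^2\dd t$, and that these $f_{\chi_j}$ (after normalization, which is automatic since we are at a Fourier--Bohr point whose functions turn out to have full norm by the pure point / Parseval argument) form a basis of $L^2(X,\mm)_{\mathsf{pp}}$, which is all of $L^2(X,\mm)$ by pure pointedness. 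That is condition (iv), so $x$ is a Wiener--Wintner point for this $\mm$.

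\textbf{(ii) $\Longrightarrow$ (i).} Conversely, suppose $\mm$ is ergodic with pure point spectrum and $x$ is a Wiener--Wintner point of $(X,G,\mm)$ with respect to $\cA$, with associated orthonormal basis $\{e_\chi : \chi \in \SSS\}$ of $L^2(X,\mm) = L^2(X,\mm)_{\mathsf{pp}}$. Fix $f \in C(X)$ and $\varepsilon > 0$. Since $\{e_\chi\}$ is an orthonormal basis of $L^2(X,\mm)$, we may pick finitely many $\chi_1,\dots,\chi_N \in \SSS$ and the coefficients $c_j := \langle f, e_{\chi_j}\rangle$ so that $\|f - \sum_{j=1}^N c_j e_{\chi_j}\|_2 < \varepsilon$. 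Now apply Proposition~\ref{sg-char}(iv) (which holds since $x$ is a Wiener--Wintner point) with this $f$ and these $c_j, \chi_j$: it gives
\[
\|f_x - \textstyle\sum_{j=1}^N c_j \chi_j\|_b = \|f - \textstyle\sum_{j=1}^N c_j e_{\chi_j}\|_2 < \varepsilon.
\]
Since $\sum_j c_j \chi_j \in TP(G)$ and $\varepsilon$ was arbitrary, $f_x \in \bap^2_{\cA}(G)$; as $f \in C(X)$ was arbitrary, $x$ is Besicovitch almost periodic along $\cA$.

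\textbf{Main obstacle.} The routine directions are the trigonometric-polynomial manipulations and the use of Proposition~\ref{sg-char}(iv) in the $(\mathrm{ii})\Rightarrow(\mathrm{i})$ direction, which is essentially immediate. The delicate point is in $(\mathrm{i})\Rightarrow(\mathrm{ii})$: one must carefully leverage the \emph{pure point spectrum} of the induced measure to conclude that the eigenfunctions $f_\chi^x$ produced by Lemma~\ref{lem-eigen-exi} are genuinely \emph{normalized} and that they \emph{exhaust} an orthonormal basis of $L^2(X,\mm)$ — the Example in Section~\ref{sec-Strongly} shows neither is automatic for a general Fourier--Bohr point. The mechanism is a Parseval/Bessel argument: Besicovitch almost periodicity gives $\|f_x\|_b^2 = \sum_\chi |a_\chi^\cA(f_x)|^2$ (the Besicovitch norm of a Besicovitch almost periodic function equals the $\ell^2$-norm of its Fourier--Bohr coefficients), while genericity gives $\|f_x\|_b^2 = \|f\|_2^2$, and pure pointedness gives $\|f\|_2^2 = \sum_\chi |\langle f, e_\chi\rangle|^2$ for a true orthonormal basis $\{e_\chi\}$; matching these forces the $f_\chi^x$ to have norm $1$ on their support and to span everything. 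Making this matching rigorous — in particular handling the possibility of eigenvalue multiplicity before invoking ergodicity, and justifying the Besicovitch--Parseval identity in the LCAG setting — is where the real work lies, and it is exactly where I would invoke the systematic results of \cite{LSS,LSS2} rather than reproving them.
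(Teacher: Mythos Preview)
Your $(\mathrm{ii})\Longrightarrow(\mathrm{i})$ is the paper's argument: the paper routes it through Proposition~\ref{dist-pp} and Proposition~\ref{finpp}, but those rest directly on Proposition~\ref{sg-char}(iv), which is exactly what you invoke.

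In $(\mathrm{i})\Longrightarrow(\mathrm{ii})$ your outline is correct but the route differs from the paper's. You import from \cite{LSS2} that the induced measure $\mm$ is ergodic with pure point spectrum, and then run the Parseval matching you describe in your ``Main obstacle'' paragraph: genericity and Besicovitch--Parseval give $\|f\|_2^2 = \|f_x\|_b^2 = \sum_\chi |a_\chi^\cA(f_x)|^2 = \sum_\chi |c_\chi|^2|\langle f,e_\chi\rangle|^2$, while the assumed pure-point orthonormal basis gives $\|f\|_2^2 = \sum_\chi |\langle f,e_\chi\rangle|^2$; since $|c_\chi|\le 1$ this forces $|c_\chi|=1$ wherever $\langle f,e_\chi\rangle\neq 0$, and varying $f$ over $C(X)$ covers every $\chi\in\SSS$. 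The paper instead proceeds \emph{without} assuming ergodicity or pure point spectrum in advance. It defines $\mm(f):=a_0^\cA(f_x)$, obtains the $f_\chi$ from Lemma~\ref{lem-eigen-exi} with only the a priori bound $\|f_\chi\|_2\le 1$, sets $g_N=\sum_{k\le N} a_{\chi_k}^\cA(f_x)\,f_{\chi_k}$ and $P_N=\sum_{k\le N} a_{\chi_k}^\cA(f_x)\,\chi_k$, and uses that bound directly in
\[
\|f-g_N\|_2^2 \;\le\; \int_X|f|^2\,\dd\mm \;-\; \sum_{k=1}^N |a_{\chi_k}^\cA(f_x)|^2 \;=\; \|f_x-P_N\|_b^2 \;\longrightarrow\; 0.
\]
This single inequality delivers spanning, normalization of the $f_\chi$, pure point spectrum, and ergodicity all at once, as outputs rather than inputs. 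Your approach is transparent once the \cite{LSS2} structural facts and the Besicovitch--Parseval identity are granted, but it leans on external machinery and carries a mild circularity risk, since the \cite{LSS2} statement you invoke is essentially this theorem in different language; the paper's approach is self-contained, with the inequality $\|f_\chi\|_2\le 1$ doing the heavy lifting.
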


The proof needs some preparation.   Let us start by giving  a formula which gives the distance from $f \in C(X)$ to $(L^2(X,\mm))_{\mathsf{pp}}$ as a formula on $G$ in terms of $f_x$.

\begin{prop}\label{dist-pp} Let $(X,G,\mm)$ be a topological dynamical system and $\cA$ a F\o lner sequence and $x$ a Wiener--Wintner point of $X$. Let
$f \in C(X)$ be given and denote its projection onto $(L^2(X,\mm))_{\mathsf{pp}}$ by $f_{\mathsf{pp}}$. Then,
$$
 \|f - f_{\mathsf{pp}}\|_2 = \inf\{ \| f_x  - p \|_{b} : p \in TP (G) \}\\
 = \inf\{ \| f_x - h \|_{b} : h \in \bap^2_{\cA}(G) \} \,.$$

\end{prop}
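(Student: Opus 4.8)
The plan is to prove the chain of equalities in two stages: first identify $\|f-f_{\mathsf{pp}}\|_2$ with $\inf\{\|f_x-p\|_b : p\in TP(G)\}$, and then show this equals $\inf\{\|f_x-h\|_b : h\in \bap^2_{\cA}(G)\}$. The second equality should be nearly immediate: trigonometric polynomials are dense in $\bap^2_{\cA}(G)$ by definition of Besicovitch almost periodicity, and $\|\cdot\|_b$ is a seminorm, so the two infima over $TP(G)$ and over its $\|\cdot\|_b$-closure $\bap^2_{\cA}(G)$ coincide. So the real content is the first equality.

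For the first equality I would use that $x$ is a Wiener--Wintner point, so by the definition there is a basis $B=\{e_\chi : \chi\in\SSS\}$ of normalized eigenfunctions of $(L^2(X,\mm))_{\mathsf{pp}}$ with $a_\chi^\cA(f_x)=\langle f,e_\chi\rangle$ for all $f\in C(X)$ and $\chi\in\SSS$, and moreover $a_\chi^\cA(f_x)=0$ for $\chi\notin\SSS$ (by Lemma~\ref{se-implies-gen} together with Corollary~\ref{cor-van}, as already recorded in the Theorem on existence of Fourier--Bohr coefficients). First I would observe that for any finite linear combination $p=\sum_{j=1}^N c_j\chi_j$ of characters, condition (iv) of Proposition~\ref{sg-char} (which holds since $x$ is a Wiener--Wintner point) gives
\[
\Big\|f-\sum_{j=1}^N c_j e_{\chi_j}\Big\|_2^2 = \lim_n \frac{1}{|A_n|}\int_{A_n}\Big|f_x(t)-\sum_{j=1}^N c_j\chi_j(t)\Big|^2\dd t = \|f_x-p\|_b^2,
\]
where one must be slightly careful: Proposition~\ref{sg-char}(iv) is stated for the $e_{\chi_j}$ appearing in the basis $f_{\chi_j}$, and for characters $\chi_j\notin\SSS$ the corresponding term simply contributes $|c_j|^2$ on both sides (the cross terms vanish by $a_{\chi_j}^\cA(f_x)=0$ and $\langle f,e_{\chi_j}\rangle=0$ since there is no eigenfunction to such $\chi_j$), so the identity extends to arbitrary $p\in TP(G)$ after reindexing to avoid repeated characters. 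Importantly the right-hand side is a genuine limit, hence equals $\|f_x-p\|_b$ as defined with a limsup.

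Taking the infimum over $p\in TP(G)$ on both sides, the right side becomes $\inf\{\|f_x-p\|_b : p\in TP(G)\}$, and the left side becomes $\inf\{\|f-q\|_2 : q\in\mathrm{span}\,B\}$, which is exactly the distance from $f$ to the closure of $\mathrm{span}\,B$ in $L^2(X,\mm)$; but that closure is precisely $(L^2(X,\mm))_{\mathsf{pp}}$ since $B$ is an orthonormal basis of it, so the infimum is attained and equals $\|f-f_{\mathsf{pp}}\|_2$. This completes the first equality and hence the proposition. The only mild obstacle I anticipate is the bookkeeping in matching an arbitrary $p\in TP(G)$ (which may involve characters outside $\SSS$ and may list a character twice) with a linear combination of the basis elements $e_\chi$; this is handled by grouping equal characters, dropping (or rather: isolating the norm contribution of) characters not in $\SSS$, and invoking the vanishing of the relevant Fourier--Bohr coefficients and inner products. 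Everything else is a direct application of Proposition~\ref{sg-char}(iv) and the definition of $\bap^2_{\cA}(G)$.
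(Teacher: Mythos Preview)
Your proposal is correct and follows essentially the same route as the paper: the paper's proof simply says the second equality is immediate from the definition of Besicovitch almost periodicity and the first ``follows directly from the characterisation in Prop.~\ref{sg-char}(iv)''. Your write-up supplies the details the paper omits, in particular the bookkeeping for characters $\chi_j\notin\SSS$ (where both sides pick up the same $|c_j|^2$ contribution), which is a genuine point since Prop.~\ref{sg-char}(iv) as stated only speaks of $\chi_j$ with an associated eigenfunction $f_{\chi_j}$.
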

\begin{proof}
The equality
\[
\inf\{ \| f_x - p \|_{b} : p \in TP(G) \} =\inf\{ \| f_x  - h \|_{b} : h \in  \bap^2_{\cA}(G) \}
\]
follows immediately from the definition of Besicovitch almost periodicity.
The equality
\[
\inf\{ \| f_x  - p \|_{b} : p \in TP(G) \} = \| f - f_{\mathsf{pp}} \|_2
\]
follows directly from the characterisation in Prop.~\ref{sg-char}(iv).
\end{proof}

The preceding proposition has the following consequence.

\begin{prop}\label{finpp} Let $(X,G,\mm)$ be an ergodic topological  dynamical system. Assume that there exist Wiener--Wintner points w.r.t. $\cA$.

Let $f \in C(X)$. Then, the following assertions  are equivalent:
\begin{itemize}
  \item[(i)] $f \in (L^2(X,\mm))_{\mathsf{pp}}$.
  \item[(ii)] There exists a Wiener--Wintner point $x$  in $(X,G,\mm)$ with respect to $\cA$  with $f_x \in \bap^2_{\cA}(G)$.
\item[(iii)] For every Wiener--Wintner point $x$ in $(X,G,\mm)$ with respect to $\cA$ we have $f_x \in \bap^2_{\cA}(G)$.
\end{itemize}
\end{prop}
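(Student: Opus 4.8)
The plan is to prove the chain of implications (i) $\Rightarrow$ (iii) $\Rightarrow$ (ii) $\Rightarrow$ (i), using Proposition~\ref{dist-pp} as the main tool together with the existence of Wiener--Wintner points guaranteed by hypothesis. The implication (iii) $\Rightarrow$ (ii) is trivial once we know that Wiener--Wintner points exist at all, which is exactly the standing assumption of the proposition.

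For (i) $\Rightarrow$ (iii): suppose $f \in (L^2(X,\mm))_{\mathsf{pp}}$, so that $f = f_{\mathsf{pp}}$ and hence $\|f - f_{\mathsf{pp}}\|_2 = 0$. Let $x$ be an arbitrary Wiener--Wintner point with respect to $\cA$. Applying Proposition~\ref{dist-pp} to this $x$ and this $f$ gives
\[
\inf\{ \|f_x - p\|_b : p \in TP(G)\} = \|f - f_{\mathsf{pp}}\|_2 = 0 \,,
\]
which is precisely the statement that $f_x \in \bap^2_{\cA}(G)$.

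For (ii) $\Rightarrow$ (i): suppose there is some Wiener--Wintner point $x$ with $f_x \in \bap^2_{\cA}(G)$. Then by definition of Besicovitch almost periodicity, $\inf\{\|f_x - p\|_b : p \in TP(G)\} = 0$; applying Proposition~\ref{dist-pp} again (to this particular $x$ and $f$) yields $\|f - f_{\mathsf{pp}}\|_2 = 0$, hence $f = f_{\mathsf{pp}}$ in $L^2(X,\mm)$, i.e. $f \in (L^2(X,\mm))_{\mathsf{pp}}$.

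The argument is essentially a direct translation through the formula of Proposition~\ref{dist-pp}, so there is no serious obstacle; the only point to be mildly careful about is that Proposition~\ref{dist-pp} must be invoked with respect to the same Wiener--Wintner point one is working with in each direction, and that the hypothesis ``there exist Wiener--Wintner points'' is what makes (iii) $\Rightarrow$ (ii) non-vacuous. I expect the write-up to be just a few lines.
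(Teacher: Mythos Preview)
Your proposal is correct and follows essentially the same approach as the paper: both arguments derive (i) $\Rightarrow$ (iii) and (ii) $\Rightarrow$ (i) directly from the distance formula of Proposition~\ref{dist-pp}, and both invoke the standing existence assumption to make (iii) $\Rightarrow$ (ii) non-vacuous. The paper's write-up is even terser than what you anticipate, but the content is identical.
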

\begin{proof}
(ii) $\Longrightarrow$ (i) $\Longrightarrow$  (iii) follows from Proposition~\ref{dist-pp}.

(iii) $\Longrightarrow$ (ii) holds as long as there are Wiener--Wintner points.
\end{proof}

\begin{proof}[Proof of Theorem~\ref{sg-vs-bes} ]
$(ii)\Longrightarrow(i)$ follows from Proposition~\ref{finpp}.

\smallskip

$(i)\Longrightarrow(ii)$ Since $x$ is Besicovitch almost periodic we have $f_x \in Bap^2_{\cA}(G)$ for all $f \in C(X)$. In particular
$a_{\chi}^\cA(f_x)$ exists for all $f \in C(X)$ and $\chi \in \widehat{G}$.

Next, note that
\[
\mm(f):= a_0^\cA(f_x)
\]
defines a positive linear mapping on $C(X)$ and hence a positive measure. As $\mm(1)=1$ it is probability. It is obviously $G$-invariant. Moreover, $x$ is a generic element for $\mm$.

Therefore, we are in the situation of Lemma~\ref{lem-eigen-exi}. Applying this result, we get that for each
\[
\chi \in \SSS':= \{ \chi \in \widehat{G} : \mbox{ there exists }  f \in C(X) \mbox{ with } a_\chi^\cA(f_x) \neq 0 \}
\]
there exists an eigenfunction $f_\chi$ with $\| f_ \chi \|_2 \leq 1$ which satisfies
$$\langle f, f_\chi \rangle =a_\chi^\cA(f_x) \qquad \mbox{ for all } f \in C(X).$$

It remains to show  that $\{ f_\chi : \chi \in \widehat{G} \}$ is an orthonormal basis for $L^2(X, \mm)$.

By the density of $C(X)$ in $L^2(X,\mm)$ it suffices to show that each $f \in C(X)$ belongs to the subspace spanned by these $f_\chi$.

Let $f \in C(X)$ be arbitrary. Then $f_x \in \bap_{\cA}(G)$. Let $\chi_n$ be an enumeration of the characters where $a_{\chi}^\cA(f_x) \neq 0$.
Let
\[
P_N= \sum_{k=1}^N a_{\chi_k}^\cA(f_x) \chi_k
\]
Then,
\[
\lim_N \| f_x-P_N \|_{b} =0 \,.
\]
Now, either via a direct expansion and computation, or via the Parseval identity for Besicovitch almost periodic functions, we have
\begin{align*}
 \| f_x-P_N \|^2_{b}&= \left( \lim_n \frac{1}{|A_n|} \int_{A_n} |f_x(t)|^2 \dd t \right) - \sum_{k=1}^N |a_{\chi_k}^\cA(f_x)|^2
\end{align*}
Define
\[
g_N:= \sum_{k=1}^N \langle f, f_{\chi_k} \rangle f_{\chi_k}= \sum_{k=1}^N a_{\chi_k}^\cA(f_x) f_{\chi_k}\,.
\]
Then,
\begin{align*}
 \| f-g_N \|_{2}^2&= \int_{X} |f(y)|^2 \dd \mm(y) - \sum_{k=1}^N \overline{a_{\chi_k}^\cA(f_x)} \langle f, f_\chi \rangle \\
 &-\sum_{k=1}^N a_{\chi_k}^\cA(f_x)\overline{ \langle f, f_\chi \rangle} + \sum_{k=1}^N |a_{\chi_k}^\cA(f_x)|^2 \| f_\chi \|^2_2 \\
 &= \int_{X} |f(y)|^2 \dd \mm(y) - 2 \sum_{k=1}^N |a_{\chi_k}^\cA(f_x)|^2 + \sum_{k=1}^N |a_{\chi_k}^\cA(f_x)|^2 \| f_\chi \|^2_2 \\
 &\leq \int_{X} |f(y)|^2 \dd \mm(y) -  \sum_{k=1}^N |a_{\chi_k}^\cA(f_x)|^2 \,,
\end{align*}
with the last inequality following from $\|f_\chi \|_2 \leq 1$.

Finally, as $x$ is generic for $\mm$ we have
\[
\int_{X} |f(y)|^2 \dd \mm(y) = \lim_n \frac{1}{|A_n|} \int_{A_n} |f_x(t)|^2 \dd t \ .
\]
Therefore,
\[
 \| f-g_N \|_{2} \leq \| f_x-P_N \|_{b} \,.
\]
As $\lim_N \| f_x-P_N \|_{b} =0$ we get that
\[
f= \sum_{k=1}^\infty \langle f, f_{\chi_k} \rangle f_{\chi_k} \ .
\]
This gives that $f$ is indeed in the pure point spectrum, and also that the eigenfunctions must be normalized. Therefore, $\mm$ has pure point spectrum (and is ergodic since our basis only contains one eigenfunction for $\chi=0$ by construction).

Since with respect to this basis of eigenfunctions we have
\[
a_\chi^\cA(f_x)=\langle f, f_\chi \rangle \qquad \mbox{ for all } f \in C(X) \,,
\]
$x$ is Wiener--Wintner point. The existence of Wiener--Wintner points implies (again) ergodicity.
\end{proof}

Let us complete the section by showing the equivalence between Wiener--Wintner points and the set of points which are generic and Besicovitch almost periodic.
This set of points has recently been a focus of attention  in \cite[Theorem~2]{KK}.

\begin{theorem} Let $(X, G, \mm)$ be an ergodic topological  dynamical system with pure point spectrum, let $\cA$ be a F\o lner sequence and let $x \in X$. Then, $x$ is a Wiener--Wintner point if and only if $x$ is $\mm$-generic and  Besicovitch almost periodic.
\end{theorem}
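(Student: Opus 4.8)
The plan is to derive the equivalence by assembling the structural facts already established, so the argument will be short; I will prove the two implications in turn.

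For the forward direction, suppose $x$ is a Wiener--Wintner point of $(X,G,\mm)$ with respect to $\cA$. That $x$ is $\mm$-generic is then Lemma~\ref{se-implies-gen}, so only Besicovitch almost periodicity requires work, i.e.\ one must show $f_x \in \bap^2_{\cA}(G)$ for every $f \in C(X)$. Here I would use the distance formula of Proposition~\ref{dist-pp}: since $(X,G,\mm)$ has pure point spectrum, $(L^2(X,\mm))_{\mathsf{pp}} = L^2(X,\mm)$, hence the projection $f_{\mathsf{pp}}$ of $f \in C(X)$ onto $(L^2(X,\mm))_{\mathsf{pp}}$ equals $f$, and Proposition~\ref{dist-pp} gives
\[
\inf \{ \, \| f_x - p \|_{b} : p \in TP(G) \, \} = \| f - f_{\mathsf{pp}} \|_2 = 0 \,,
\]
which is precisely the statement that $f_x \in \bap^2_{\cA}(G)$. (Equivalently one could invoke the implication (i)$\Longrightarrow$(iii) of Proposition~\ref{finpp}, applied to each $f \in C(X)$, all of which lie in $(L^2(X,\mm))_{\mathsf{pp}}$ by pure pointedness.)

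For the converse, assume $x$ is $\mm$-generic and Besicovitch almost periodic. The key move is Theorem~\ref{sg-vs-bes}: it produces an ergodic measure $\mm'$ on $X$ with pure point spectrum for which $x$ is a Wiener--Wintner point with respect to $\cA$. By Lemma~\ref{se-implies-gen}, $x$ is then generic for $\mm'$ as well, so that
\[
\int_X f(y) \, \dd\mm'(y) = \lim_n \frac{1}{|A_n|} \int_{A_n} f(t.x) \, \dd t = \int_X f(y) \, \dd\mm(y)
\]
for all $f \in C(X)$, the last equality being genericity of $x$ for $\mm$. Since $C(X)$ separates Radon probability measures on the compact space $X$, this forces $\mm' = \mm$, whence $x$ is a Wiener--Wintner point of $(X,G,\mm)$ with respect to $\cA$.

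I do not expect a genuine obstacle here: every ingredient --- genericity of Wiener--Wintner points, the distance formula, the identification of Besicovitch almost periodic points with Wiener--Wintner points for pure point measures in Theorem~\ref{sg-vs-bes}, and the uniqueness of the measure associated to a generic point --- is already in place, and the proof is a matter of threading them together. The only point requiring a moment's care is the identification $\mm = \mm'$ in the converse, which rests on the standard fact (used repeatedly in Section~\ref{sec-Strongly}) that a generic point determines the invariant probability measure for which it is generic.
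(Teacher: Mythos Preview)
Your proof is correct and follows essentially the same route as the paper. The only cosmetic difference is in the forward direction: the paper appeals directly to Theorem~\ref{sg-vs-bes} (its implication (ii)$\Rightarrow$(i)) to obtain Besicovitch almost periodicity, whereas you unwind that implication one layer via Proposition~\ref{dist-pp} (or equivalently Proposition~\ref{finpp}); the converse direction is identical in both.
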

\begin{proof}
Let $x$ be a Wiener-Winter point. Then, $x$ is generic and, by  Theorem~\ref{sg-vs-bes}, it is Besicovitch almost periodic.

Conversely, if $x$ is generic Besicovitch almost periodic then  by Theorem~\ref{sg-vs-bes} $x$ is a Wiener--Wintner point  for some ergodic measure $\mm'$ with pure point spectrum.
It follows that $x$ is a generic point for both $\mm$ and $\mm'$ we get that $\mm=\mm'$. As $x$ is Wiener--Wintner point for $\mm'$, the claim follows.
\end{proof}

\section{Application to translation bounded measure dynamical systems}\label{sec-Application}
In this  section we give an  application to dynamical systems of translation bounded measures. Such dynamical systems serve as  models in aperiodic order. For a brief overview of the field of aperiodic order we recommend the monographs \cite{TAO,KLS}, while for an introduction to dynamical systems of translation bounded measures see \cite{BL}.

\smallskip

Starting from a Delone set $\Lambda$, or more generally a translation bounded measure $\mu$, the idea is to construct a dynamical system $\XX$ consisting of all pointsets/measures which locally look similar to $\Lambda$ or $\mu$, respectively. We will make this precise below. Then, via the so called Dworkin argument \cite{Dwo}, we can study the diffraction of $\Lambda$ or $\mu$ by studying the spectrum of the induced unitary representation of $G$ on $L^2(\XX, \mm)$ for some (ergodic) measure $\mm$ on $\XX$.
Let us now recall some of the basic definitions.

\smallskip

Recall that, via the Riesz- representation theorem \cite{Rud}, we can identify the set of \textit{Radon measures} on $G$ with the functionals on $\Cc(G)$ which are continuous in the so called inductive topology of $\Cc(G)$.

A  measure $\mu$  on $G$  is called \textit{translation bounded}  if the for all $\varphi \in \Cc(G)$ the convolution $\mu \ast \varphi : G\longrightarrow  \CC$ defined by
$$\mu \ast \varphi (t) :=\int_G \varphi (t-s) d\mu (s)$$
is a bounded function.  This is equivalent \cite{ARMA1} to
\[
\| \mu \|_{U} := \sup_{t \in G} \left| \mu \right|(t+U)  < \infty\,,
\]
for one (and hence all \cite{BM}) precompact set $U$ with non-empty interior. Here, $\left| \mu \right|$ denotes the total variation measure of $\mu$, see \cite[Theorem 6.5.6]{Ped}.

The set of all translation bounded measures is denoted by $\cM^\infty (G)$.

\smallskip

Given a precompact open set $U$ and a constant $C>0$ the set
\[
\cM_{C,U}:=\{ \mu \in \cM^\infty(G): \| \mu \|_{U} \leq C \}
\]
is vaguely compact and $G$-invariant \cite{BL}. Moreover, the action of $G$ on $\cM_{C,U}$ via translates is continuous in the \textit{vague topology} (i.e. the topology of convergence on test functions $\varphi \in \Cc(G)$). Therefore, any $G$-invariant vaguely closed set $\XX \subseteq \cM_{C,U}$ gives rise to a topological dynamical system $(\XX,G)$. Such a dynamical system is called a \textit{dynamical system of translation bounded measures} or just \textit{TMDS} for short.

Such systems can be thought of as generalizations of subshifts over finite alphabets. Indeed, for $G =\ZZ$ the translation bounded measures can be identified with the bounded functions.

In particular, for each $\mu \in \cM^\infty(G)$, the \textit{hull}
$$\XX(\mu) :=\overline{\{ \alpha(t,\mu) :t\in G\}}$$
is a TMDS.

\smallskip

Over the years much effort has been put into understanding the case of systems with pure point spectrum. The equivalence of pure point dynamical and diffraction spectrum was established in \cite{BL,LMS,Gou-1}. Recently, the equivalence between pure point spectrum for dynamical systems and mean/Besicovitch almost periodicity of its points has been established in \cite{LSS,LSS2}.

\smallskip

Finally,  we show that for TMDS, we can classify the Wiener--Wintner points in terms of Besicovitch almost periodicity of the point (considered as a translation bounded measure).

First, us recall  that a measure $\mu$ is called \textit{Besicovitch almost periodic} with respect to the van Hove sequence $(A_n)$ if $\mu\ast \varphi \in \bap_{\cA}^2(G)$ for all $\varphi \in \Cc (G)$.

\begin{corollary}[Characterization of Besicovitch almost periodic measures] Let $\mu \in \cM^\infty(G)$ and let $\cA$ be a van Hove sequence. Then,  $\mu$ is a Besicovitch almost periodic along $\cA$ if and only if there exists an ergodic measure $\mm$ on $\XX(\mu)$ with pure point spectrum such that $\mu$ is Wiener--Wintner point for $(\mm,\cA)$.
\end{corollary}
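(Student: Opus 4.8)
The plan is to reduce the statement about the measure $\mu$ to the already-established Theorem~\ref{sg-vs-bes} about Wiener--Wintner points of a general topological dynamical system, applied to the TMDS $\XX(\mu)$. The only conceptual content of the corollary is to translate ``Besicovitch almost periodicity of the measure $\mu$'' (i.e. $\mu*\varphi \in \bap^2_\cA(G)$ for all $\varphi \in \Cc(G)$) into ``Besicovitch almost periodicity of the point $\mu \in \XX(\mu)$'' (i.e. $f_\mu \in \bap^2_\cA(G)$ for all $f \in C(\XX(\mu))$); once this equivalence is in hand, the corollary is just Theorem~\ref{sg-vs-bes} for $X = \XX(\mu)$ and $x = \mu$.

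First I would record the dictionary between functions on the hull and convolutions of the measure. For each $\varphi \in \Cc(G)$ the map
\[
f_\varphi : \XX(\mu) \longrightarrow \CC, \qquad f_\varphi(\nu) := (\nu * \widetilde{\varphi})(0) = \int_G \varphi(-s)\, \dd \nu(s),
\]
is continuous on $\XX(\mu)$ (this is exactly the vague topology, since $\XX(\mu) \subseteq \cM_{C,U}$ for suitable $C,U$), and by $G$-equivariance of the action one has $(f_\varphi)_\mu(t) = f_\varphi(t.\mu) = (\mu * \widetilde{\varphi})(t)$ up to the reflection convention; so $(f_\varphi)_\mu$ is, after the harmless substitution $\varphi \mapsto \widetilde\varphi$, exactly the function $\mu * \varphi$. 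Hence $\mu$ is a Besicovitch almost periodic \emph{measure} along $\cA$ if and only if $(f_\varphi)_\mu \in \bap^2_\cA(G)$ for all $\varphi \in \Cc(G)$.

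Next I would upgrade ``for all $\varphi \in \Cc(G)$'' to ``for all $f \in C(\XX(\mu))$''. One inclusion is trivial: the functions $f_\varphi$ lie in $C(\XX(\mu))$, so if $\mu$ is a Besicovitch almost periodic point then it is a Besicovitch almost periodic measure. For the converse, the point is that the algebra generated by $\{ f_\varphi : \varphi \in \Cc(G) \}$ is dense in $C(\XX(\mu))$ with respect to $\|\cdot\|_\infty$ (it separates points of the hull and contains constants --- this is standard for TMDS, see \cite{BL}), and the Besicovitch almost periodic functions $\bap^2_\cA(G)$ form a closed subalgebra of the bounded measurable functions under $\|\cdot\|_b$; moreover $\| g_\mu \|_b \le \|g\|_\infty \cdot \limsup_n \big(|A_n|^{-1}\int_{A_n} 1\,\dd t\big)^{1/2}$ and, because products of coordinate-type functions code to products of the corresponding functions on $G$, the set $\{ f \in C(\XX(\mu)) : f_\mu \in \bap^2_\cA(G) \}$ is a $\|\cdot\|_\infty$-closed subalgebra of $C(\XX(\mu))$ containing all $f_\varphi$, hence all of $C(\XX(\mu))$. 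This is exactly the argument already alluded to in \cite{LSS2}; I would simply cite it.

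Finally, having established that $\mu$ is a Besicovitch almost periodic measure along $\cA$ if and only if $\mu$ is a Besicovitch almost periodic point of $(\XX(\mu), G)$ along $\cA$, I would invoke Theorem~\ref{sg-vs-bes} with $X := \XX(\mu)$ and $x := \mu$: the latter condition is equivalent to the existence of an ergodic measure $\mm$ on $\XX(\mu)$ with pure point spectrum for which $\mu$ is a Wiener--Wintner point with respect to $\cA$. This is precisely the assertion of the corollary. The main (and really only) obstacle is the density/subalgebra step in the previous paragraph --- making sure that continuous functions on the hull are uniformly approximable by polynomials in the ``smoothed'' functions $f_\varphi$ and that the coding map is multiplicative enough to keep one inside $\bap^2_\cA(G)$ under these approximations; everything else is a direct appeal to Theorem~\ref{sg-vs-bes}. (One should also note that a van Hove sequence is in particular a F\o lner sequence, so the hypotheses of Theorem~\ref{sg-vs-bes} are met.)
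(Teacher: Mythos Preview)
Your proposal is correct and follows essentially the same approach as the paper: the paper's proof is the one-line ``This is an immediate consequence of Theorem~\ref{sg-vs-bes} and \cite[Prop.~7.1]{LSS2}'', where the cited proposition from \cite{LSS2} is precisely the equivalence you spell out between Besicovitch almost periodicity of the measure $\mu$ and Besicovitch almost periodicity of $\mu$ as a point of $\XX(\mu)$. Your density/subalgebra sketch simply unpacks what that citation contains.
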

\begin{proof}
This is an immediate consequence of Theorem~\ref{sg-vs-bes} and \cite[Prop.~7.1]{LSS2}.
\end{proof}

\subsection*{Acknowledgments}
NS was supported by the Natural Sciences and Engineering Council of Canada via grant 2024-04853, and he is grateful for the support. The authors would like to
thank Michael Baake, Christoph Richard and Gerhard Keller on some illuminating discussions. The authors gratefully acknowledge BIRS and the 2025 workshop "Directions in Aperiodic Order" there during which the article was finalized.

\begin{appendix}

\section{Tempered F\o lner sequences}\label{tempered}

\begin{lemma}\label{lem:fol} Let $(A_n)$ be a F\o lner sequence and let  $B_n \subseteq G$, $n\in \NN$, be  given with
\[
\lim_n \frac{|A_n \Delta B_n|}{|A_n|} = 0 \,.
\]
Then, $(B_n)$ is F\o lner.  Moreover, if $(A_n)$ is tempered and $B_n \subseteq A_n$ holds for all $n\in \NN$,  then $(B_n)$ is also tempered.
\end{lemma}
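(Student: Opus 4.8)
The plan is to treat the two assertions separately: the first by a symmetric-difference triangle estimate, the second by monotonicity of the defining union under $B_n\subseteq A_n$.

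First I would record the auxiliary fact that the hypothesis forces $|B_n|/|A_n|\to 1$. This is immediate from $\big||B_n|-|A_n|\big|\le |A_n\triangle B_n|$ upon dividing by $|A_n|$; in particular $|B_n|>0$ for all large $n$, so the Følner quotients for $(B_n)$ are eventually meaningful. For the Følner property itself, fix $t\in G$ and apply the elementary inclusion $P\triangle R\subseteq(P\triangle Q)\cup(Q\triangle R)$ twice, together with translation invariance of Haar measure in the form $|(t+A_n)\triangle(t+B_n)|=|A_n\triangle B_n|$, to get
\[
|B_n\triangle(t+B_n)|\ \le\ 2\,|A_n\triangle B_n|\ +\ |A_n\triangle(t+A_n)| \,.
\]
Dividing by $|B_n|$ and pulling out the factor $|A_n|/|B_n|$, the right-hand side becomes $\frac{|A_n|}{|B_n|}\big(2\tfrac{|A_n\triangle B_n|}{|A_n|}+\tfrac{|A_n\triangle(t+A_n)|}{|A_n|}\big)$, and each factor has a limit ($|A_n|/|B_n|\to1$ by the auxiliary fact, the two bracketed ratios $\to0$ by hypothesis and since $(A_n)$ is Følner), so the whole expression tends to $0$. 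Hence $(B_n)$ is Følner.

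For the second assertion, suppose in addition $(A_n)$ is tempered with constant $C$ and $B_n\subseteq A_n$ for all $n$. Then $B_n-B_k\subseteq A_n-A_k$ for $1\le k<n$, so $\bigcup_{k=1}^{n-1}(B_n-B_k)\subseteq\bigcup_{k=1}^{n-1}(A_n-A_k)$ and therefore $\big|\bigcup_{k=1}^{n-1}(B_n-B_k)\big|\le C|A_n|$. By the auxiliary fact there is $N$ with $|A_n|\le 2|B_n|$ for all $n\ge N$, giving $\big|\bigcup_{k=1}^{n-1}(B_n-B_k)\big|\le 2C|B_n|$ for $n\ge N$. The finitely many remaining indices $n<N$ cause no difficulty: each set $\bigcup_{k<n}(B_n-B_k)$ lies in the relatively compact set $\bigcup_{k<n}(A_n-A_k)$, hence has finite measure, and $|B_n|>0$, so taking $C':=\max\big\{2C,\ \max_{1\le n<N}|\bigcup_{k<n}(B_n-B_k)|/|B_n|\big\}$ shows that $(B_n)$ is tempered with constant $C'$.

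The one point I would flag as a (mild) obstacle is precisely the treatment of the initial segment $n<N$ in the second part: temperedness is an asymptotic condition, so the real content is the tail estimate $\big|\bigcup_{k<n}(B_n-B_k)\big|\le 2C|B_n|$, after which one simply absorbs the finitely many — and finite — initial terms into the constant. Implicit here is that $|B_n|>0$ for every $n$, which is part of $(B_n)$ being a Følner sequence; should this fail for small $n$ one may pass to a tail without affecting either the Følner or the tempered property. Everything else is routine set arithmetic together with invariance of Haar measure.
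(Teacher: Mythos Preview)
Your proof is correct and follows essentially the same route as the paper: the auxiliary fact $|B_n|/|A_n|\to 1$, the symmetric-difference triangle inclusion $B_n\triangle(t+B_n)\subseteq(B_n\triangle A_n)\cup(A_n\triangle(t+A_n))\cup(t+(A_n\triangle B_n))$ for the F\o lner part, and the monotonicity $\bigcup_{k<n}(B_n-B_k)\subseteq\bigcup_{k<n}(A_n-A_k)$ for temperedness. Your version is in fact slightly more careful about the finitely many initial indices and the positivity of $|B_n|$, which the paper leaves implicit.
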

\begin{proof} The assumptions on $(A_n)$ and $(B_n)$ easily  imply
\[
\lim_n \frac{|B_n|}{|A_n|}=1 \,.
\]
Now, for each $t \in G$ we clearly  have
\begin{align*}
  B_n \Delta (t+B_n) & \subseteq \left( B_n \Delta A_n \right) \cup \left( A_n \Delta (t+A_n) \right) \cup (t+(A_n \Delta B_n)) \,.
\end{align*}
This easily  gives that  $(B_n)$ is F\o lner. Next, if $B_n \subseteq A_n$ for all $n$, we then have
\begin{align*}
  \left| \bigcup_{k=1}^{n-1} (B_n-B_k) \right| & \leq   \left| \bigcup_{k=1}^{n-1} (A_n-A_k) \right| \\
   &\leq C |A_n| \leq C'|B_n|
\end{align*}
for some constant $C'$, with the last inequality following from the fact that $\frac{|B_n|}{|A_n|}$ converges to $1$ and hence is bounded.
\end{proof}

Let us note here in passing that if $B_n \subseteq A_n$ and $B_n$ is tempered, it does not seem that $A_n$ is tempered. This is the reason why below we need a separate proof for the union.

\begin{lemma}\label{Folner2} Let $(A_n)$ be a tempered F\o lner sequence and let $s \in G$. Then, the sequences $(B_n)$ and $(C_n)$ defined by
\begin{align*}
  B_n & =A_n \cap (s+A_n)\\
  C_n & =A_n \cup (s+A_n)\\
 \end{align*}
 are tempered van F\o lner sequences.
\end{lemma}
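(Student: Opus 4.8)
The plan is to reduce both statements to Lemma~\ref{lem:fol} together with one short direct estimate for the union. First I would record the elementary fact that, applying the Følner property of $(A_n)$ with the single element $t=s$, one has $|A_n \Delta (s+A_n)| = o(|A_n|)$; in particular both $|A_n \setminus (s+A_n)|$ and $|(s+A_n)\setminus A_n|$ are $o(|A_n|)$. Note also that $B_n = A_n \cap (s+A_n)$ and $C_n = A_n \cup (s+A_n)$ are again open and relatively compact (their closures are contained in $\overline{A_n}\cup\overline{s+A_n}$, which is compact), so there is no issue with the ambient requirements in the definition of a Følner sequence.

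For $(B_n)$ the whole claim is immediate from Lemma~\ref{lem:fol}: since $B_n \subseteq A_n$ we have $A_n \Delta B_n = A_n \setminus (s+A_n)$, hence $|A_n \Delta B_n|/|A_n|\to 0$, so $(B_n)$ is Følner; and, again because $B_n \subseteq A_n$ and $(A_n)$ is tempered, the second part of Lemma~\ref{lem:fol} gives that $(B_n)$ is tempered. For $(C_n)$, since $C_n \supseteq A_n$ we have $C_n \Delta A_n = (s+A_n)\setminus A_n$, so $|C_n \Delta A_n|/|A_n|\to 0$ and Lemma~\ref{lem:fol} shows $(C_n)$ is Følner. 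Temperedness of $(C_n)$, however, does \emph{not} follow from Lemma~\ref{lem:fol}, because $C_n$ is a superset of $A_n$ rather than a subset, and temperedness is not inherited by supersets; this is the one place where a genuinely new computation is needed, and it is the step I expect to be the main (indeed only) obstacle.

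The computation is a symmetry argument. Using additivity of the group law one has $(s+A_n)-(s+A_k)=A_n-A_k$, $\;A_n-(s+A_k)=(A_n-A_k)-s$ and $(s+A_n)-A_k=(A_n-A_k)+s$, so expanding the difference set of the two unions gives
\[
C_n - C_k = (A_n - A_k)\cup\bigl((A_n-A_k)+s\bigr)\cup\bigl((A_n-A_k)-s\bigr).
\]
Writing $D_n := \bigcup_{k=1}^{n-1}(A_n - A_k)$ and taking the union over $k=1,\dots,n-1$, this yields $\bigcup_{k=1}^{n-1}(C_n-C_k) \subseteq D_n \cup (D_n + s)\cup(D_n - s)$, whence, by translation invariance of Haar measure and the tempering inequality $|D_n|\le C|A_n|$ for $(A_n)$,
\[
\left|\bigcup_{k=1}^{n-1}(C_n-C_k)\right| \le 3|D_n| \le 3C|A_n| \le 3C|C_n|,
\]
the last inequality because $A_n \subseteq C_n$. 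This is precisely the tempering condition for $(C_n)$ with constant $3C$, completing the proof; everything else is routine bookkeeping with symmetric differences.
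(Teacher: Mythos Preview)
Your proof is correct and follows essentially the same approach as the paper: both reduce the F\o lner property of $(B_n)$ and $(C_n)$ and the temperedness of $(B_n)$ to Lemma~\ref{lem:fol}, and both verify temperedness of $(C_n)$ by expanding $C_n-C_k$ into translates of $A_n-A_k$ and bounding by a constant times $|D_n|$. The only cosmetic difference is that you observe $(s+A_n)-(s+A_k)=A_n-A_k$ and thereby obtain the constant $3C$, whereas the paper keeps all four pieces separate and gets $4C$.
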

\begin{proof}
Lemma~\ref{lem:fol} implies that $(B_n)$ and $(C_n)$ are F\o lner sequences and that $(B_n)$ is tempered. For $C_n$ we obviously have

\begin{align*}
  \bigcup_{k=1}^{n-1} (C_n-C_k) 	& \subseteq \left( \bigcup_{k=1}^{n-1} (A_n-A_k) \right) \cup \left( \bigcup_{k=1}^{n-1} (A_n-(s+A_k)) \right) \\
& \cup \left( \bigcup_{k=1}^{n-1} ((s+A_n)-A_k) \right) \cup \left( \bigcup_{k=1}^{n-1} (s+A_n)-(s+A_k) \right) \,.
\end{align*}
This implies
\[
\left|    \bigcup_{k=1}^{n-1} (C_n-C_k)  \right| \leq 4 \left|    \bigcup_{k=1}^{n-1} (A_n-A_k)  \right|  \leq 4C |A_n| \,.
\]
giving that $(C_n)$ is tempered.
\end{proof}

\end{appendix}

\end{document}